\documentclass{article}

\usepackage[sort&compress, round]{natbib}
\usepackage{framed}

\usepackage{amsmath, amssymb, amsthm, amscd, verbatim}

\usepackage[usenames,dvipsnames]{color}

\usepackage{tikz}

\allowdisplaybreaks

\newcommand{\R}  {{\mathbb R}}
\newcommand{\N}  {{\mathbb N}}
\newcommand{\K}  {{\mathbb K}}
\newcommand{\eps}{\varepsilon}

\newcommand{\bx}{{\mathbf x}}
\newcommand{\by}{{\mathbf y}}
\newcommand{\bg}{{\boldsymbol{\gamma}}}
\newcommand{\br}{{\boldsymbol{\rho}}}
\newcommand{\be}{{\boldsymbol{\eta}}}
\newcommand{\bxi}{{\boldsymbol{\xi}}}
\newcommand{\bz}{{\boldsymbol{\zeta}}}
\newcommand{\bn}{{\boldsymbol{0}}}
\newcommand{\bsa}{\boldsymbol{a}}

\newcommand{\B}{{\mathfrak B}}
\newcommand{\Z}{{\mathfrak Z}}

\newcommand{\PP} {\mathcal{P}}
\newcommand{\U}  {\mathcal{U}}
\newcommand{\V}  {\mathcal{V}}
\newcommand{\W}  {\mathcal{W}}
\newcommand{\M}  {\mathcal{M}}
\newcommand{\mN}  {\mathcal{N}}
\newcommand{\SSS}{\mathcal{S}}
\newcommand{\A}{\mathcal{A}}

\newcommand{\EE}{\mathbb{E}}

\newcommand{\lo}{\downarrow}
\newcommand{\up}{\uparrow}

\DeclareMathOperator{\decay}{dec}
\DeclareMathOperator{\err}{{\rm e}}

\theoremstyle{plain}
\newtheorem{lemma}{Lemma}
\newtheorem{theo}{Theorem}
\newtheorem{corollary}{Corollary}
\newtheorem{prop}{Proposition}

\theoremstyle{definition}
\newtheorem{defi}{Definition}
\newtheorem{rem}{Remark}
\newtheorem{exmp}{Example}

\title{Embeddings of Reproducing Kernel Hilbert Spaces 
with General Weights}

\author{Michael Gnewuch, Peter Kritzer, Klaus Ritter}

\date{April 28, 2026}

\begin{document}

\maketitle

\begin{abstract}
We study embeddings between reproducing kernel Hilbert
spaces $H(K)$ of functions of $d \in \N \cup \{\infty\}$ variables.
The kernels $K$ are superpositions of weighted finite
tensor products of a fixed univariate kernel.
The basic idea for the embeddings
is to compensate a change of the univariate kernel 
by a suitable transformation of the weights. For the proofs
we employ ($d \in \N$) and develop ($d = \infty$) a discrete
calculus on the cone of all weights, where completely monotone
weights play a particular role.

We sketch how to apply the embedding results to 
computational problems, as, e.g., numerical integration or 
function recovery.
\end{abstract}




\section{Introduction} 

Our goal is to study computational problems on Hilbert
spaces $H$ of functions that depend on $d$ variables;
the parameter $d$ is also referred to as the \emph{dimension} of the 
problem. A typical situation is the approximation of a continuous 
linear \emph{solution operator} defined on $H$ and mapping to 
a Banach space $G$,
\[
 S \colon H \to G,
\]
by suitable algorithms. 
Here we interpret $S$ as an operator that, in a given computational 
problem, maps an input function $f\in H$ to the corresponding 
(exact)
solution $g\in G$. The goal is to approximate this solution, and the 
approximation error is commonly measured in the norm of $G$.
Typical examples of such a setting are numerical integration, 
function recovery, or well-posed linear operator equations.

This paper is concerned with the case where $d$ is a 
potentially huge positive integer, 
or where $d$ is infinite. In the latter case, the elements of the 
space $H$ depend on infinitely (but countably) many variables. 
Such a situation naturally occurs
in models that are based on infinite sequences 
of independent random variables, as for example in 
the context of
PDEs with random coefficients, see, among many others, 
\citet{CD2015},
\citet{KN16}, and \citet[Chap.~4]{ABW22}.

A standing assumption in our paper is that the function spaces $H$
are \emph{reproducing kernel Hilbert spaces}, and that they are 
\emph{weighted} in the sense of \citet{SW98}.
Let us explain this in more detail. Let $\U_d$ be the power set of 
$\{1,\ldots,d\}$ for finite $d$, and the set of all finite subsets 
of the positive integers if $d=\infty$. We assign non-negative real 
numbers $\gamma_u$ to each set $u\in \U_d$
of variable indices, indicating the 
joint influence of the variables $x_j$ with $j \in u$
on the functions $f \in H$.
A weight with a larger value corresponds to 
a stronger influence,
a weight with a smaller value corresponds to 
a weaker influence.
This setting is also motivated by applications 
beyond random PDEs, as for example in financial 
engineering, where it may frequently be the case that the variables 
$x_j$ have diminishing influence with growing $j$.

The major part of this paper is devoted to the case where $d=\infty$,
simply because this case is more elaborate and technically more 
demanding than the case of finite $d$. We may view the case 
$d=\infty$ as the limiting case of tractability analysis, which,
roughly speaking, deals with finite $d$ tending to $\infty$.
For tractability analysis we refer to the trilogy 
\citet{NW08,NW10,NW12}.

We assume that the kernels $K$ of the spaces 
$H = H(K)$ considered here 
are superpositions of weighted tensor product kernels.
More precisely, $K$ takes the form 
\begin{equation}\label{eq:kernel_superp}
   K (\bx, \by) := \sum_{u \in \U_d} \gamma_u k_u(\bx, \by)
\end{equation}
with weights
\[
\bg := (\gamma_u)_{u \in \U_d}
\]
and finite tensor products
\[
k_u (\bx,\by) := \prod_{j \in u} k(x_j,y_j)
\]
of a given kernel $k$ over an arbitrary domain $D$. 
A suitably chosen domain $\Z \subseteq D^\N$
and $\bx,\by \in \Z$
may be needed to ensure the convergence in
\eqref{eq:kernel_superp} in the case $d = \infty$.

Considering the elements of the Hilbert space $H(k)$ with 
reproducing kernel $k$ as univariate functions, 
the elements of the Hilbert spaces $H(k_u)$ are 
$|u|$-variate functions, since they only depend 
on the variables with indices in $u$. 
We add that finite tensor products are often employed
in the analysis of high-, yet finite-dimensional computational
problems.

In many cases in the literature, authors impose a certain 
structure on the weights $\gamma_u$ for $u\in \U_d$. The 
most commonly considered case is that of product weights, which are 
defined in terms of a sequence $(\gamma_j)_{j\ge 1}$ of non-negative 
real numbers, and for which 
\[
\gamma_u := \prod_{j\in u} \gamma_j
\]
for every $u\in \U_d$ (with the convention that 
$\gamma_\emptyset =1$).
However, in recent years it has turned out that for several 
applications of interest, including the analysis of the 
aforementioned PDEs with random coefficients, one needs to allow for
more general weights that lack the product structure, in particular 
POD (product and order dependent), or the even more involved SPOD 
(smoothness-driven, product and order dependent) weights, 
as used, e.g., in \citet{HHKKS24}; see 
Section~\ref{SEC:Part_Weights} 
for the definition of POD and finite-order weights.

It is known that structural
properties of the function space $H(K)$,
for finite or infinite $d$, may substantially facilitate the error 
analysis for particular types of algorithms. 
A key feature for the error analysis is the orthogonal
decomposition of $H(K)$ that is induced by the representation
\eqref{eq:kernel_superp} of the kernel $K$, if 
$1 \not\in H(k)$.
To give an example, in the analysis of 
randomized algorithms, one frequently would like to know about the 
variance of the algorithm, for which it is 
most convenient if \eqref{eq:kernel_superp} induces
an ANOVA (an acronym for ``Analysis of Variance'')
decomposition of the functions in $H(K)$. 
To provide another example, for deterministic algorithms, often an 
anchored decomposition is favorable. We refer to \citet{BG14} and 
\citet{DG12}, as well as the references therein for 
further details. Moreover, note that anchored decompositions
also play an important role in the construction of 
multivariate decomposition methods (originally named changing 
dimension algorithms) as introduced in \citet{KSWW10a}, and studied 
further in several follow-up papers. 

Hence, we may be in a situation where we would like to analyze a 
certain (type of) algorithm in a space $H(K)$ 
that does \emph{not} allow for an effective error analysis. 
In such a case, we are interested in a way to transfer to a 
different function space where we can derive a meaningful error 
bound that also yields a useful
bound when going back to the original space. This 
can be done by finding suitable embeddings between weighted 
function spaces (in the sense of Sloan and
Wo\'zniakowski). 

This approach has been developed for the case of 
tensor products of weighted Hilbert spaces
in several papers, see, e.g., 
\citet{HR13} and \citet{GHHR17}. It is known that, in terms of the 
weights, the case of a tensor product space $H(K)$
exactly corresponds to the 
assumption of the weights being of product form,
where $K$ given by \eqref{eq:kernel_superp} satisfies
\begin{equation}\label{g17}
K(\bx,\by) = \prod_{j=1}^d (1 + \gamma_j k(x_j,y_j))
\end{equation}
in both cases $d\in\N$ and $d = \infty$. 

In the present paper, 
motivated by applications where one 
does not have product weights, 
but, e.g., POD or other types of weights, we try to 
establish embedding results that cover general weights with as 
few additional assumptions 
as possible, thereby moving away from the tensor product 
assumption for the function spaces $H(K)$ under consideration.

Embeddings of function spaces relying on more general classes 
of weights, but with specific multivariate building blocks,
like Sobolev spaces  of mixed smoothness, 
have already been studied in the literature. See,
e.g., 
\citet{Hefter20161}, \citet{Hinrichs2015}, \citet{GHHRW2017}, 
\citet{GKS2022}.
To our best knowledge, the present paper is the first to address 
embeddings for 
general weights and general tensor product kernels $k_u$.

Let us now give a short outlook on the paper. 
In Section~\ref{sec:weights} we study 
the set $\M_d$ of completely monotone weights $\bg$,
where the latter are considered as set functions,
as well as the \emph{sum operator} $T_{d, C}^\up$, 
given by $T_{d, C}^\up \bg := \bg^\up$ with
\[
\gamma^\up_u :=
\sum_{\substack{v\in \U_d \\ u\subseteq v}}
C^{2\left|v\right|}\gamma_v
\]
for every $u \in \U_d$. Here $C$ denotes a fixed positive 
constant, and \begin{equation}\label{g11}
\sum_{v\in \U_d} C^{2\left|v\right|}\gamma_v < \infty
\end{equation}
is required to hold in the case $d = \infty$. As the main result
for $d=\infty$, we determine the range and the inverse $T^\lo_{d, C}$
of $T^\up_{d, C}$, see Theorem~\ref{t1b};
here the range is a proper subset of $\M_d$, defined by an 
additional $\sigma$-continuity property.
For $d \in \N$ we present a known result concerning the 
range and the inverse in Theorem~\ref{t1a}, see 
\citet{Aigner79} and \citet{Matus94}; 
here the range is equal to $\M_d$.

In Section~\ref{SEC:Part_Weights} we investigate
particular classes of weights, namely product, POD, and 
finite-order weights. First of all, we study the complete 
monotonicity and the summability \eqref{g11} for these kinds of 
weights. Furthermore, we study invariance properties of the 
aforementioned classes of weights with respect to the mappings 
$T^\up_{d, C}$ and $T^\lo_{d, C}$. For the
applications we have in mind, it is crucial to find out whether 
the decay of the weights is invariant under $T_{d, C}^\up$ and 
$T^\lo_{d, C}$, respectively. 
This is because the decay of the weights is intimately related to 
the decay of the minimal errors for the 
computational problems we are aiming at,
and may, 
at least implicitly, occur in the corresponding error bounds, 
which can already be seen, e.g., in \citet{HMNR10},
\citet{KSWW10a}, and \citet{PW11}. Results regarding invariance 
of the decay are given in 
Proposition~\ref{le:decay_POD}, Corollary~\ref{le:decay_prod}, and
Proposition~\ref{le:decay_FO} for POD, product, and finite-order
weights, respectively. 

After having studied the fundamental properties of weights and their 
transformations via $T^\up_{d, C}$ and $T^\lo_{d, C}$
in Sections~\ref{sec:weights} and \ref{SEC:Part_Weights}, 
we formally introduce the function spaces under consideration in 
this paper and the basic embedding results
in Section~\ref{Sec:Embeddings}.

In addition to $k$ and $\bg$, which define $K$ via
\eqref{eq:kernel_superp},
we first of all consider another reproducing kernel
$k^\up$ such that $k$ and $k^\up$ have a common 
domain and
\[
H(k) \subseteq H(1+k^\up).
\]
We add that the stronger assumption
$H(k) \subseteq H(k^\up)$ is typically not satisfied in the 
applications to the computational problems we have
in mind.

Secondly, we consider the weights $\bg^\up :=  
T^\up_{d, C} \bg$, where $C$  may be 
chosen as an arbitrary upper bound on the norm of the embedding of 
$H(k)$ into $H(1+k^\up)$. 
In this way we obtain a pair of reproducing kernels of the 
same form, namely $K$ over the domain $\Z$ and 
analogously $K^\up$ with $k$ 
and $\bg$ replaced by $k^\up$ and $\bg^\up$, respectively, over the
domain $\Z^\up$. We show that the domains
satisfy $\Z^\up \subseteq \Z$ and that
the restriction
\[
f \mapsto f|_{\Z^\up}
\]
defines a bounded linear operator from 
$H(K)$ to $H(K^\up)$ of
norm at most one.
See Theorem~\ref{Lemma4} for the case $d= \infty$ and
Theorem~\ref{Lemma1} for the case $d \in \N$; 
in the latter case we have $\Z^\up = \Z$, implying that
the restriction is actually the identical embedding.

Section~\ref{Sec:Application} shows how the findings on the 
embeddings of function spaces with general 
weights can be used when one 
would like to derive error bounds for a given computational problem 
on a Hilbert space $H(K)$ with reproducing kernel $K$ of the 
form \eqref{eq:kernel_superp} by switching back and forth to an 
RKHS with a similar, but more favorable structure.
This discussion is held in a setting as general as possible, i.e., 
without specifying the computational problem; 
the essential assumption is that the solution operator $S$ and 
all algorithms under consideration are bounded linear operators. 
The sum operator $T^\up_{d, C}$ and a straightforward 
application of the results from Section~\ref{Sec:Embeddings} may be 
used to obtain upper error bounds. Likewise, the inverse 
$T^\lo_{d, C}$ of the sum operator may be used to obtain
lower error bounds. More precisely, if $\bg \in \M_d$ then
$T^\lo_{d, C}$ for finite $d$ or the 
canonical extension of $T^\lo_{d, C}$ to $\M_d$ for $d=\infty$
may be used directly.
Otherwise, the weights $\bg$ are substituted by completely monotone
weights $\bg_\ast \leq \bg$ in the first place.
See Corollary~\ref{c1} for the upper bounds and 
Corollary~\ref{c2} for the lower bounds.

An application to concrete computational problems of the theory 
developed, such as, e.g., $L^p$-approximation in the case 
$d=\infty$, is deferred to a follow-up paper.

The paper is rounded off by an appendix containing 
background on completely monotone weights and related concepts.

\section{General Weights}\label{sec:weights}

\subsection{Notation}\label{sec:notation}

Let $d\in \N \cup \{\infty\}$.
For $d\in\N$ we denote the set $\{1,\ldots,d\}$ by $[d]$, and for 
$d=\infty$ we denote by $[d]$ the whole set $\N$. We put 
\[
\U_{\infty} := \{u\subset \N : \text{$u$ finite} 
\} 
\] 
and 
\[
\U_d := \{u \subset \N : u \subseteq [d] \}
\]
for $d \in \N$. The size of a finite set $u$ is denoted by $|u|$. 

Let $\bg, \bg^\prime, \bg^{(1)}, \ldots \colon \U_d \to \R $ 
denote families of real numbers, indexed by $\U_d$. We write $\bg \geq
\bg^\prime$ if $\gamma_u \geq \gamma^\prime_u$ for every $u \in
\U_d$, and $\lim_{n \to \infty} \bg^{(n)} = \bg$ 
if $\lim_{n \to \infty} \left(\bg^{(n)}\right)_u = \gamma_u$ for 
every $u \in \U_d$. Moreover, the family $\bg$ that is constant 
and equal to zero is denoted by $\bn$.

Let $\V$ be a countably infinite index set. For a 
family $\bsa=(a_v)_{v\in\mathcal{V}}$ of non-negative real
numbers the decay is defined by
\[
\decay\left(\bsa\right)
:=\sup\left\{\tau>0:
\sum_{v\in\V} a_v^{1/\tau} <\infty \right\},
\]
with the convention $\sup\emptyset = 0$. For instance,
we have 
$\decay\left(\bsa\right) = \lambda$ for $\V := \N$ and
$a_v:= v^{-\lambda}$ with $\lambda\geq 0$.

We use the term ``weights'' and the notation 
$\bg=\left(\gamma_u\right)_{u\in \U_d}$
for any family of non-negative real numbers,
indexed by $\U_d$. Moreover, $\W_d$ denotes
the set of all weights, i.e.,
\[
\W_d := \{ \bg \colon \U_d \to \R : \bg \geq \bn \} \subset \R^{\U_d}.
\]

\subsection{Complete Monotonicity}

We define the difference operators $\Delta_v$ on $\R^{\U_d}$ by
\[
\left(\Delta_v \bg \right)_u := 
\sum_{w\subseteq v} (-1)^{|w|} \gamma_{u\cup w}
\]
for $\bg \in \R^{\U_d}$ 
and $u,v \in \U_d$.
Note that $\Delta_\emptyset \bg = \bg$.

\begin{lemma}\label{l1}
For $\bg \in \R^{\U_d}$ and $u,v \in \U_d$ we have 
\[
u \cap v \neq \emptyset \quad \Rightarrow \quad
\left(\Delta_v \bg \right)_u = 0.
\]
\end{lemma}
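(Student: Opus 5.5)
Fix $j \in u \cap v$. I would pair the subsets $w \subseteq v$ according to whether they contain $j$, via the involution $w \mapsto w \triangle \{j\}$ on the power set of $v$. Every $w \subseteq v$ with $j \notin w$ is matched with exactly one partner $w \cup \{j\}$, which also lies in $v$ because $j \in v$. So the sum splits as
\[
\left(\Delta_v \bg\right)_u = \sum_{w \subseteq v\setminus\{j\}} \Bigl( (-1)^{|w|}\gamma_{u\cup w} + (-1)^{|w|+1}\gamma_{u\cup w\cup\{j\}} \Bigr).
\]

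Since $j \in u$, we have $u \cup w \cup \{j\} = u \cup w$. Hence each bracket equals $(-1)^{|w|}(\gamma_{u\cup w} - \gamma_{u\cup w}) = 0$, and the whole sum vanishes.

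The only point needing care is that $v$ is finite, so the sum is finite and the regrouping is legitimate; this holds because $v \in \U_d$ also in the case $d=\infty$. There is no real obstacle. The argument is the standard observation that a difference in direction $j$ of a set function is zero on sets that already contain $j$. Equivalently, one could first note that $\Delta_v = \Delta_{v\setminus\{j\}}\Delta_{\{j\}}$ and that $(\Delta_{\{j\}}\bg)_{u'} = \gamma_{u'} - \gamma_{u'\cup\{j\}} = 0$ whenever $j \in u'$. Every index $u' = u\cup w$ arising in the outer sum contains $j$, so the result follows. I would present the direct pairing argument, since it avoids having to prove that factorization first.
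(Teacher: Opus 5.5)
Your proof is correct and is essentially the paper's argument. The paper splits $w = w_1 \cup w_2$ with $w_1 \subseteq v \cap u$ and $w_2 \subseteq v \setminus u$, observes that $\gamma_{u \cup w} = \gamma_{u \cup w_2}$, and factors out the vanishing sum $\sum_{w_1 \subseteq v \cap u} (-1)^{|w_1|} = 0$; your pairing $w \leftrightarrow w \cup \{j\}$ is the same cancellation, carried out for a single element $j \in u \cap v$ instead of the whole intersection.
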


\begin{proof}
In any case 
\[
\left(\Delta_v \bg \right)_u =
\sum_{w_1\subseteq v \cap u}\ \sum_{w_2\subseteq v \setminus u} 
(-1)^{|w_1|+|w_2|} \gamma_{u\cup w_2}.
\]
Moreover,
\[
\sum_{w_1\subseteq v \cap u} (-1)^{|w_1|} = 0
\]
if $v \cap u \neq \emptyset$.
\end{proof}

Let us consider differences $\Delta_v \bg$ of orders one and
two, i.e., with $|v|=1$ and $|v|=2$. 
If
$v=\{s\}$ for some $s\in [d]$, then
\[
\left(\Delta_v \bg \right)_u = \gamma_u - \gamma_{u\cup \{s\}}
\]
for every $u \in \U_d$.
If $v=\{s_1,s_2\}$ for some distinct 
$s_1,s_2 \in [d]$, then 
\begin{align*}
 \left(\Delta_v \bg \right)_u 
 &= \gamma_u - \gamma_{u\cup \{s_1\}} 
 - \gamma_{u \cup \{s_2\}} 
 + \gamma_{u \cup \{s_1,s_2\}}\nonumber\\
 &= \gamma_u - \gamma_{u\cup \{s_1\}} 
 -  \left(\gamma_{u \cup \{s_2\}} - 
\gamma_{u \cup \{s_1,s_2\}}\right)\\
&= \left( \Delta_{\{s_1\}} \bg \right)_u -   
\left( \Delta_{\{s_1\}} \bg \right)_{u\cup \{s_2\}}
\end{align*}
for every $u \in \U_d$.
The following lemma, which
gives a recursion formula for the 
operators $\Delta_v$,
illustrates that we can interpret all the expressions 
$\left(\Delta_v \bg \right)_u$ with $v \neq \emptyset$
as iterated differences.

\begin{lemma}\label{l2}
For $\bg \in \R^{\U_d}$, $u,v \in \U_d$, and $s \in [d]$ we have
\[
\left(\Delta_{v \cup \{s\}} \bg\right)_u = 
\left(\Delta_v \bg\right)_u  - 
\left(\Delta_v \bg\right)_{u \cup \{s\}}.
\]
\end{lemma}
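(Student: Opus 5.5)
We need to prove Lemma~\ref{l2}. The plan is a case split on whether $s \in v$, since the definition of $\Delta_{v\cup\{s\}}$ only sees the set $v\cup\{s\}$.

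\emph{Case $s \notin v$.} Here every subset $w \subseteq v\cup\{s\}$ is uniquely of the form $w = w'$ or $w = w' \cup\{s\}$ with $w' \subseteq v$, and $|w'\cup\{s\}| = |w'|+1$. Splitting the defining sum accordingly gives
\[
\left(\Delta_{v\cup\{s\}}\bg\right)_u
= \sum_{w'\subseteq v} (-1)^{|w'|}\gamma_{u\cup w'}
- \sum_{w'\subseteq v} (-1)^{|w'|}\gamma_{(u\cup\{s\})\cup w'}.
\]
The first sum is $(\Delta_v\bg)_u$ and the second is $(\Delta_v\bg)_{u\cup\{s\}}$, which is the claim. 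This is exactly the computation displayed before the lemma for $|v|=1$, now done for general $v$. Since all sums are finite ($v$ is finite), no convergence issues arise, also for $d=\infty$.

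\emph{Case $s \in v$.} Now $v\cup\{s\}=v$, so the left-hand side equals $(\Delta_v\bg)_u$. The claim therefore reduces to showing $(\Delta_v\bg)_{u\cup\{s\}} = 0$. This follows from Lemma~\ref{l1}, because $(u\cup\{s\})\cap v \ni s$ is nonempty.

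The only point needing care is not to overlook the second case. There the bijection $w \leftrightarrow (w', \text{with/without } s)$ fails, and Lemma~\ref{l1} is what rescues the identity. Otherwise the argument is a routine reindexing.
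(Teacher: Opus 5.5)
Your proof is correct and follows the paper's argument essentially verbatim: in the case $s \in v$ you use Lemma~\ref{l1} to get $\left(\Delta_v \bg\right)_{u\cup\{s\}} = 0$, and in the case $s \notin v$ you split the subsets of $v \cup \{s\}$ according to whether they contain $s$, which is exactly the paper's one-line reindexing.
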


\begin{proof}
If $s \in v$ then
$\left(\Delta_v \bg\right)_{u \cup \{s\}} = 0$ by Lemma~\ref{l1},
which yields the claim. If $s \not\in v$ then
\[
\left( \Delta_{v \cup \{s\}} \bg\right)_u =
\sum_{w \subseteq v} (-1)^{|w|} 
\left( \gamma_{u \cup w} - \gamma_{u \cup w \cup \{s\}} 
\right)
=
\left( \Delta_v \bg\right)_u - 
\left( \Delta_v \bg\right)_{u\cup \{s\}}
\]
as claimed.
\end{proof}

Next, we show an auxiliary result regarding the composition 
of difference operators.

\begin{lemma}\label{l2a}
We have $(\Delta_v \circ \Delta_w) \bg = \Delta_{v \cup w} \bg$
for all $v,w\in \U_d$ and all $\bg \in \R^{\U_d}$.
\end{lemma}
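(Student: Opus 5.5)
We prove the identity by induction on $|w|$, using the recursion of Lemma~\ref{l2} together with the linearity of all operators $\Delta_v$. Since every $u \in \U_d$ is finite, $|w|$ is a non-negative integer, so the induction is well founded in both cases $d \in \N$ and $d = \infty$.

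The base case $w = \emptyset$ is immediate. We have $\Delta_\emptyset \bg = \bg$, hence $(\Delta_v \circ \Delta_\emptyset)\bg = \Delta_v \bg = \Delta_{v \cup \emptyset}\bg$.

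For the induction step, write $w = w' \cup \{s\}$ with $s \notin w'$, and assume the claim holds for $w'$ and every $v \in \U_d$. Fix $u \in \U_d$ and put $\be := \Delta_{w'} \bg$. Lemma~\ref{l2}, applied to $\bg$ with index set $w'$, gives
\[
(\Delta_w \bg)_x = \eta_x - \eta_{x \cup \{s\}}
\]
for every $x \in \U_d$. Each $\Delta_v$ is linear, and the coordinate $(\Delta_v \bxi)_u$ involves only the finitely many values $\xi_{u \cup z}$ with $z \subseteq v$. Hence
\[
(\Delta_v \Delta_w \bg)_u
= \sum_{z \subseteq v} (-1)^{|z|} \left(\eta_{u\cup z} - \eta_{u \cup z \cup \{s\}}\right)
= (\Delta_v \be)_u - (\Delta_v \be)_{u \cup \{s\}}.
\]
By the induction hypothesis, $\Delta_v \be = \Delta_v \Delta_{w'}\bg = \Delta_{v \cup w'}\bg$. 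Applying Lemma~\ref{l2} once more, now with index set $v \cup w'$, we obtain
\[
(\Delta_{v\cup w'}\bg)_u - (\Delta_{v\cup w'}\bg)_{u\cup\{s\}} = (\Delta_{v \cup w' \cup \{s\}}\bg)_u = (\Delta_{v\cup w}\bg)_u,
\]
which is the claim for $w$.

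The only step needing care is the exchange $\Delta_v(\be - \be_{\cdot \cup\{s\}}) = \Delta_v \be - (\Delta_v \be)_{\cdot\cup\{s\}}$. This is a purely finite rearrangement, since $(u \cup z) \cup \{s\} = (u \cup \{s\}) \cup z$, so no convergence issues arise even for $d=\infty$.

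Overlaps between $v$ and $w$, for instance $s \in v$, need no separate treatment. Lemma~\ref{l2} already covers that case through Lemma~\ref{l1}, which is what makes the formula hold with the union $v \cup w$ rather than a disjoint union.
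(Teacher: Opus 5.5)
Your proof is correct and uses the same idea as the paper: an induction driven by Lemma~\ref{l2}. The paper inducts on $|v|$, so Lemma~\ref{l2} in the operator form $\Delta_{v \cup \{s\}} = \Delta_{\{s\}} \circ \Delta_v$ applies directly to the outer operator (once to $\bg$ and once to $\Delta_w \bg$); your induction on $|w|$ instead puts $\Delta_{\{s\}}$ on the inside, which is why you need the extra (correct) finite computation commuting $\Delta_v$ past $\Delta_{\{s\}}$.
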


\begin{proof}
Lemma~\ref{l2} yields 
$\Delta_{v \cup \{s\}} \bg = \Delta_{\{s\}} ( \Delta_v \bg)$
for every $s \in [d]$ and every $v\in \U_d$. 
The statement of the present lemma trivially
holds true for $v = \emptyset$, and inductively we obtain
\[
\Delta_{v \cup \{s\} \cup w} \bg =
\Delta_{\{s\}} (\Delta_{v \cup w} \bg) =
\Delta_{\{s\}} (\Delta_v (\Delta_w \bg)) =
\Delta_{v \cup \{s\}} (\Delta_w \bg),
\]
which completes the proof.
\end{proof}

\begin{defi}\label{def:comp_mon_dec}
The weights $\bg \in \W_d$ are called 
\emph{completely monotone} 
if
\[
\left(\Delta_v \bg \right)_u \ge 0
\] 
for all $u,v\in \U_d$. We denote by 
\[
\M_d := \{ \bg \in \W_d : 
\text{$\Delta_v \bg \geq \bn$ for every $v \in \U_d$}\}
\]
the set of all completely monotone weights in $\W_d$.
\end{defi}

The notion of complete monotonicity from 
Definition~\ref{def:comp_mon_dec} is a particular instance of a 
general concept 
for set functions,  see \citet{Kurtz74}.
Moreover, for $d \in \N$ complete monotonicity of  
functions on the set $\U_d$, or equivalently on the discrete unit
cube $\{0,1\}^d$, is analogous
to complete monotonicity of functions on the continuous
unit cube $[0,1]^d$. See Appendices~\ref{a1} and \ref{a2}
for details.
The study of complete monotonicity of weights in the context 
of weighted function spaces (in the sense of Sloan and
Wo\'zniakowski) seems to be new.

Completely monotone weights satisfy the following
monotonicity conditions, 
the first of which has 
already been employed
in the literature on high-dimensional integration, 
see, e.g.,~\citet[Eqn.~(5.37)]{DKS13}.

\begin{lemma}\label{cmd:claim_i}
For $\bg \in \M_d$ and $v, w \in \U_d$ we have
\[
v \subset w \quad \Rightarrow \quad
\left(\gamma_v \geq \gamma_w \ \wedge \ \Delta_v \bg \ge \Delta_w \bg
\right).
\]
\end{lemma}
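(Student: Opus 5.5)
We prove both monotonicity statements by induction along a chain of one-point enlargements from $v$ to $w$, using the recursion of Lemma~\ref{l2}. Since $v \subset w$ and $w$ is finite, we write $w \setminus v = \{s_1,\dots,s_m\}$ with $m \geq 1$ and put $v_0 := v$ and $v_i := v \cup \{s_1,\dots,s_i\}$, so that $v_m = w$. It then suffices to prove both inequalities in the case $w = v \cup \{s\}$ with $s \notin v$, and to concatenate the resulting inequalities along the chain, using transitivity of $\geq$ on $\R$ and on $\R^{\U_d}$.

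For the first inequality, take $w = v\cup\{s\}$. The formula for differences of order one gives
\[
\gamma_v - \gamma_{v \cup \{s\}} = \left(\Delta_{\{s\}} \bg\right)_v \geq 0,
\]
since $\bg \in \M_d$. Hence $\gamma_v \geq \gamma_w$.

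For the second inequality, fix $u \in \U_d$. Lemma~\ref{l2} yields
\[
\left(\Delta_v \bg\right)_u - \left(\Delta_{v\cup\{s\}} \bg\right)_u
= \left(\Delta_v \bg\right)_{u \cup \{s\}} \geq 0,
\]
again by complete monotonicity of $\bg$. Since $u$ is arbitrary, this gives $\Delta_v \bg \geq \Delta_{v \cup \{s\}} \bg$ componentwise, as required.

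There is no real obstacle here. The only point needing care is that Lemma~\ref{l2} is stated for arbitrary $u$, including the case $s \in u$. This causes no difficulty, because the right-hand side $\left(\Delta_v \bg\right)_{u\cup\{s\}}$ is still nonnegative by Definition~\ref{def:comp_mon_dec}, whichever case occurs.

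Note also that the first statement is the special case $u=\emptyset$ of the second, since $\left(\Delta_v\bg\right)_\emptyset$ is not $\gamma_v$. It is more cleanly obtained directly as above, or alternatively from the second statement via $\gamma_v = (\Delta_\emptyset \bg)_v$.
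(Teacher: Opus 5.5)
Your proof is correct and is essentially the paper's argument. You reduce to $w = v \cup \{s\}$ with $s \notin v$; transitivity along a chain suffices there, where the paper cites Lemma~\ref{l2a}. You then get $\gamma_v \geq \gamma_w$ from $(\Delta_{\{s\}}\bg)_v \geq 0$, and $\Delta_v\bg \geq \Delta_w\bg$ from Lemma~\ref{l2} together with $(\Delta_v\bg)_{u\cup\{s\}} \geq 0$. Your closing remark is garbled, though: $\gamma_v \geq \gamma_w$ is not the case $u=\emptyset$ of the second inequality, as your own observation $(\Delta_v\bg)_\emptyset \neq \gamma_v$ shows. It comes instead from Lemma~\ref{l2} with $\emptyset$ in place of $v$ and $v$ in place of $u$, which is how the paper writes it. Since that remark is never used, the proof stands.
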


\begin{proof}
Due to Lemma~\ref{l2a} 
it suffices to consider the case $w = v \cup \{s\}$ with 
$s \not\in v$. Using Lemma~\ref{l2}
and $\bg \in \M_d$ we obtain
\[
\gamma_v = \left( \Delta_\emptyset \bg\right)_v
= \left( \Delta_{\{s\}} \bg\right)_v
+ \left( \Delta_\emptyset \bg\right)_w
\geq
\left( \Delta_\emptyset \bg\right)_w = \gamma_w
\]
as well as
\[
\left( \Delta_v \bg\right)_{u} =
\left( \Delta_w \bg\right)_{u} +
\left( \Delta_{v} \bg\right)_{u \cup \{s\}} \geq
\left( \Delta_w \bg\right)_{u}
\]
for any $u \in \U_d$.
\end{proof}

Let $\max \emptyset := 0$ by convention.
For $d = \infty$ and $u,v \in \U_d$ we define $u \leq v$ 
if $\max u \leq \max v$ to turn
$\U_d$ into a directed set, and we put
\[
\mN_d:= \{\bg \in \M_d \colon \lim_u \gamma_u = 0\}.
\]
For $d \in \N$ we put $\mN_d := \M_d$.

\begin{lemma}\label{lem:diff_op}
For $v\in \U_d$ the difference operator $\Delta_v$ 
satisfies $\Delta_v (\M_d) \subseteq \M_d$ as well as
$\Delta_v (\mN_d) \subseteq \mN_d$.
\end{lemma}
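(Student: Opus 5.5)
The plan is to derive both inclusions from Lemma~\ref{l2a} and the monotonicity in Lemma~\ref{cmd:claim_i}.

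\emph{First inclusion.} Let $\bg \in \M_d$ and $v \in \U_d$, and put $\be := \Delta_v \bg$. We must show $\be \in \W_d$ and $\Delta_w \be \geq \bn$ for every $w \in \U_d$.
\begin{itemize}
\item Non-negativity is immediate, since $\be = \Delta_v \bg \geq \bn$ by the definition of $\M_d$.
\item By Lemma~\ref{l2a}, $\Delta_w \be = \Delta_w(\Delta_v \bg) = \Delta_{v \cup w} \bg$.
\item Since $v \cup w \in \U_d$ and $\bg \in \M_d$, this is $\geq \bn$.
\end{itemize}
Hence $\be \in \M_d$. For $d \in \N$ this already gives the second inclusion, because there $\mN_d = \M_d$.

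\emph{Second inclusion for $d = \infty$.} Let $\bg \in \mN_d$; by the first inclusion it only remains to show $\lim_u (\Delta_v \bg)_u = 0$ along the directed set $\U_d$.
\begin{itemize}
\item By Lemma~\ref{cmd:claim_i} with $\emptyset \subseteq v$, we have $\Delta_v \bg \leq \Delta_\emptyset \bg = \bg$. Using the first inclusion as well, $0 \leq (\Delta_v \bg)_u \leq \gamma_u$ for all $u$.
\item The case $v = \emptyset$ is trivial, since then $\Delta_v \bg = \bg$ and we can use $\bg$ directly rather than appeal to strict inclusion.
\end{itemize}
Because $\gamma_u \to 0$ along the directed set, the squeeze gives $(\Delta_v \bg)_u \to 0$. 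Thus $\Delta_v \bg \in \mN_d$.

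The main point is recognizing that Lemma~\ref{l2a} turns the complete monotonicity of $\Delta_v \bg$ into that of $\bg$. Beyond that there is no serious obstacle; the only care needed is the sandwich bound in the $d = \infty$ case.
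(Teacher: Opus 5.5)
Your proposal is correct and matches the paper's proof: Lemma~\ref{l2a} gives $\Delta_w(\Delta_v\bg)=\Delta_{v\cup w}\bg\geq\bn$ for the first inclusion. For the $\mN_d$ part with $d=\infty$, the sandwich $\bn\leq\Delta_v\bg\leq\bg$ from Lemma~\ref{cmd:claim_i} does the job.
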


\begin{proof}
Let $v\in \U_d$. Due to Lemma~\ref{l2a} and the definition of 
$\M_d$, we obtain for $\bg \in \M_d$ and each $w\in \U_d$ that
$\Delta_w(\Delta_v \bg) = \Delta_{v \cup w} \bg \ge \bn$, i.e., 
$\Delta_v \bg \in \M_d$. \\
Now consider $\bg \in \mN_d$ in the case $d = \infty$.
Since $\mN_d \subseteq \M_d$, we know
that $\Delta_v \bg \in \M_d$. Furthermore, Lemma~\ref{cmd:claim_i} yields 
$\Delta_v \bg \le \Delta_\emptyset \bg = \bg$. Due to the definition of 
$\mN_d$, these facts imply that $\Delta_v \bg \in \mN_d$.
\end{proof}

\subsection{The Sum Operator and Its Inverse}
\label{Subsec:Sum_Op_Inv}

For any $C>0$ we consider the set
\[
\SSS_{d,C} := \left\{ \bg \in \W_d : 
\sum_{v \in \U_d} C^{2|v|} \gamma_v < \infty \right\}
\]
of weights and the sum operator 
\[
T^{\up}_{d,C} \colon \SSS_{d,C} \to \W_d
\]
given by
\[
\left(T^\up_{d,C} \bg\right)_u :=
\sum_{\substack{v\in \U_d \\ u\subseteq v}}
C^{2\left|v\right|}\gamma_v.
\]
Clearly $\SSS_{d,C} = \W_d$ for $d \in \N$.
To simplify the notation in proofs of results,
we write $\SSS_d$ and $T^\up_d$
instead of  $\SSS_{d,C}$ and $T^\up_{d,C}$, respectively, as long as
$C$ is considered to be fixed.

In the sequel we show that the sum operator is injective, and we
determine its range and
its inverse.
First of all, we note that
\begin{equation}\label{eq:C_OBdA_1}
T^\up_{d,C}(\SSS_{d,C}) = T^\up_{d,1}(\SSS_{d,1})
\end{equation}
for every $C > 0$.

\begin{lemma}\label{cmd:claim_iii_neu}
For every $C>0$ we have
$T^\up_{d,C}(\SSS_{d,C}) \subseteq \mN_d \subseteq \M_d$.
\end{lemma}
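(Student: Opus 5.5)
We have to show that $\bg^\up := T^\up_{d,C}\bg$ lies in $\mN_d$ for every $\bg \in \SSS_{d,C}$; the inclusion $\mN_d \subseteq \M_d$ holds by definition. The plan is to compute all differences $\Delta_w \bg^\up$ explicitly and see that they are sums of non-negative terms, and then, for $d=\infty$, to get $\lim_u \gamma^\up_u = 0$ from the summability condition.

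\emph{Step 1 (explicit differences).} Fix $u,w \in \U_d$. By Lemma~\ref{l1} we have $(\Delta_w \bg^\up)_u = 0$ whenever $u \cap w \neq \emptyset$, so assume $u \cap w = \emptyset$. Expanding gives
\[
(\Delta_w \bg^\up)_u = \sum_{w'\subseteq w} (-1)^{|w'|}
\sum_{\substack{v \in \U_d\\ u\cup w' \subseteq v}} C^{2|v|}\gamma_v .
\]
The series is absolutely convergent because $\bg \in \SSS_{d,C}$, so we may interchange the order of summation:
\[
(\Delta_w \bg^\up)_u = \sum_{\substack{v\in\U_d\\ u \subseteq v}} C^{2|v|}\gamma_v \sum_{w' \subseteq w\cap v} (-1)^{|w'|} .
\]
The inner sum equals $1$ if $w \cap v = \emptyset$ and $0$ otherwise. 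Hence
\[
(\Delta_w \bg^\up)_u = \sum_{\substack{v\in\U_d\\ u\subseteq v,\ v\cap w=\emptyset}} C^{2|v|}\gamma_v \;\ge\; 0 .
\]
Together with $\bg^\up \ge \bn$ (which is the case $w=\emptyset$), this shows $\bg^\up \in \M_d$. For $d\in\N$ this already finishes the proof, since there $\mN_d=\M_d$.

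\emph{Step 2 (the limit for $d=\infty$).} Let $\eps>0$. Because $\sum_{v} C^{2|v|}\gamma_v<\infty$ and $\U_\infty$ is countable, there is a finite set $F \subset \U_\infty$ whose complement carries total mass less than $\eps$. Put $m := \max\{\max v : v \in F\}$. If $\max u > m$, then every $v \supseteq u$ satisfies $\max v \ge \max u > m$, so $v \notin F$. Consequently
\[
\gamma^\up_u \le \sum_{v \notin F} C^{2|v|}\gamma_v < \eps .
\]
Since $u \ge u_0$ in the directed set means $\max u \ge \max u_0$, choosing any $u_0$ with $\max u_0 = m+1$ gives $\gamma^\up_u < \eps$ for all $u \ge u_0$. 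Thus $\lim_u \gamma^\up_u = 0$ and $\bg^\up \in \mN_\infty$.

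The only point needing care is justifying the interchange of summation in Step 1 for $d=\infty$. This is straightforward: the double sum is dominated by $2^{|w|}\sum_v C^{2|v|}\gamma_v<\infty$. Otherwise the argument is routine.
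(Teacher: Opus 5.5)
Your proof is correct and follows essentially the paper's route: you establish $(\Delta_w \bg^\up)_u = \sum_{v \supseteq u,\; v\cap w=\emptyset} C^{2|v|}\gamma_v \ge 0$, then use summability to get $\lim_u \gamma^\up_u = 0$. The differences are minor: the paper reduces to $C=1$ and derives the formula by induction on $|w|$ via Lemma~\ref{l2}, while you get it in one step by interchanging sums and using that $\sum_{w'\subseteq w\cap v}(-1)^{|w'|}$ vanishes unless $w\cap v=\emptyset$; you also write out the $\eps$-argument for the limit, which the paper only sketches.
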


\begin{proof}
In view of \eqref{eq:C_OBdA_1}, we may assume that $C=1$. 
Let $\bg^\up := T^\up_d \bg$ for $\bg \in \SSS_d$. We show that
\begin{equation}\label{rep_dif_of_dif_gamma_up}
\left(\Delta_v \bg^\up \right)_u = 
\sum_{\substack{w\in \U_d \\ u\subseteq w, w\cap v = \emptyset}}
\gamma_w
\end{equation}
for all $u, v \in \U_d$.
Let us start with $|v|=0$, i.e., 
$v=\emptyset$. In that case we have
\[
\left(\Delta_v \bg^\up\right)_u = \bg^\up_u = 
\sum_{\substack{w\in \U_d \\ u\subseteq w, w\cap v = \emptyset}}
\gamma_w.
\]
Now let us assume that \eqref{rep_dif_of_dif_gamma_up} 
holds for $v\in \U_d$ and assume $s \in [d]\setminus v$. 
Due to Lemma~\ref{l2} we have
\begin{align*}
\left(\Delta_{v \cup \{s\}} \bg^\up\right)_u 
&=
\left(\Delta_{v} \bg^\up\right)_u -
\left(\Delta_{v} \bg^\up\right)_{u \cup \{s\}} \\
&= 
\sum_{\substack{w\in \U_d \\ u\subseteq w, w\cap v = \emptyset}}
\gamma_w
- \sum_{\substack{w\in \U_d \\ u\cup \{s\} \subseteq w, 
        w\cap v = \emptyset}}
\gamma_w \\
&= \sum_{\substack{w\in \U_d \\ u\subseteq w, 
         w\cap (v\cup \{s\}) = \emptyset}}
\gamma_w,
\end{align*}
which completes the proof of \eqref{rep_dif_of_dif_gamma_up}.
Since the right-hand side in \eqref{rep_dif_of_dif_gamma_up} is
non-negative, we obtain $\bg^\up \in \M_d$.

Furthermore, $\bg \in \SSS_d$ means that the family 
$(\gamma_v)_{v\in \U_d}$ is (unconditionally) summable. 
This implies, in particular, that 
$\gamma_u^{\up} = \sum_{\substack{v\in \U_d \\ u\subseteq v}}
\gamma_v$ tends to zero if 
$\max u$ tends to $\infty$. Hence $\bg^{\up} \in \mN_d$.
\end{proof}

\begin{rem}\label{r1}
Let $\bg \in \M_d$ and $u,v \in \U_d$ with $d=\infty$.
Due to Lemma~\ref{cmd:claim_i},
$\left(\Delta_{[s] \setminus v} \bg\right)_u$ 
with $s \in \N$ yields a non-increasing sequence in $[0,\gamma_u]$.
\end{rem}

We define a mapping 
\[
T^\lo_{d,C} \colon \M_d \to \W_d
\]
by
\[
\left(T^\lo_{d,C} \bg\right)_u :=
C^{-2|u|} \cdot
\begin{cases}
\left(\Delta_{[d] \setminus u} \bg\right)_u & \text{if $d < \infty$,}\\
\lim_{s \to \infty} \left(\Delta_{[s] \setminus u} \bg\right)_u & 
\text{if $d = \infty$.}
\end{cases}
\]
To simplify the notation in proofs of results,
we write $T^\lo_d$ instead of $T^\lo_{d,C}$, as long as
$C$ is considered to be fixed.

\begin{lemma}\label{cmd:claim_ii_neu}
For every $C>0$ and $\bg \in \M_d$ as well as
$u \in \U_d$ we have
\begin{equation}\label{g2}
\left(T^\lo_{d,C} \bg\right)_u
 = C^{-2|u|} \cdot
\sum_{\substack{v\in \U_d \\ u \subseteq v}} (-1)^{|v|-|u|} \gamma_v
\end{equation}
if $d \in \N$ and
\begin{equation}\label{g3}
\left(T^\lo_{d,C} \bg\right)_u
 = C^{-2|u|} \cdot \lim_{s \to \infty}
\sum_{\substack{v\in \U_s \\ u \subseteq v}} (-1)^{|v|-|u|} \gamma_v
\end{equation}
if $d = \infty$.
\end{lemma}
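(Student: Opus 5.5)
The claim reduces to the following identity for finite $s$: for every $u \in \U_d$ and every finite $s$ with $u \subseteq [s]$ (take $s=d$ when $d\in\N$),
\[
\left(\Delta_{[s]\setminus u} \bg\right)_u = \sum_{\substack{v\in \U_s \\ u \subseteq v}} (-1)^{|v|-|u|} \gamma_v .
\]
I would prove this directly from the definition of $\Delta_v$ and then read off both formulas.

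\textbf{The identity.} By definition,
\[
\left(\Delta_{[s]\setminus u} \bg\right)_u = \sum_{w \subseteq [s]\setminus u} (-1)^{|w|} \gamma_{u \cup w}.
\]
The map $w \mapsto v := u \cup w$ is a bijection from the subsets of $[s]\setminus u$ onto the sets $v \in \U_s$ with $u \subseteq v$. Its inverse is $w = v \setminus u$. Since $u$ and $w$ are disjoint, $|w| = |v| - |u|$. Reindexing the sum gives the identity. Note that the identity does not use complete monotonicity; it holds for any $\bg \in \R^{\U_d}$.

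\textbf{The case $d \in \N$.} Take $s = d$. Then $[d]\setminus u$ is exactly the index set in the definition of $T^\lo_{d,C}$, and $\U_s = \U_d$. Multiplying the identity by $C^{-2|u|}$ yields \eqref{g2} immediately.

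\textbf{The case $d = \infty$.} Fix $u$ and consider only $s \ge \max u$, so that $u \subseteq [s]$. The definition of $T^\lo_{d,C}$ involves the limit of $(\Delta_{[s]\setminus u}\bg)_u$ as $s \to \infty$. This limit exists because $\bg \in \M_d$: by Remark~\ref{r1}, these terms form a non-increasing sequence in $[0,\gamma_u]$. The identity holds termwise for every such $s$, so the sequence in \eqref{g3} is the same sequence. Its limit therefore also exists and coincides with the one in the definition. Multiplying by $C^{-2|u|}$ gives \eqref{g3}.

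The only point needing any care is the existence of the limit in the case $d=\infty$, and that is already supplied by Remark~\ref{r1}. The identity itself is a routine reindexing of a finite sum.
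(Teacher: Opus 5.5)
Your proposal is correct and matches the paper's proof: the paper also proves the reindexing identity $\left(\Delta_{[s]\setminus u}\bg\right)_u=\sum_{v\in\U_s,\,u\subseteq v}(-1)^{|v|-|u|}\gamma_v$ for arbitrary $\bg$ (its display~\eqref{g7}), and then takes $s:=d$ for $d\in\N$ and $s\ge\max u$ with $s\to\infty$ for $d=\infty$. Your explicit appeal to Remark~\ref{r1} for the existence of the limit is a harmless addition; the paper leaves it implicit in the definition of $T^\lo_{d,C}$.
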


\begin{proof}
For $\bg \in \W_d$, $s \in [d]$, and $u \subseteq [s]$ we have
\begin{equation}\label{g7}
\left(\Delta_{[s]\setminus u} \bg\right)_u =
\sum_{w \subseteq [s] \setminus u} (-1)^{|w|} \gamma_{u \cup w} =
\sum_{\substack{v\in \U_s \\ u \subseteq v}} (-1)^{|v|-|u|} \gamma_v.
\end{equation}
For $d \in \N$ we take $s:=d$ to obtain \eqref{g2} from \eqref{g7}.
Now consider $\bg \in \M_d$ and $u \in \U_d$ with $d = \infty$. Here
we use \eqref{g7} with $s \geq \max u$ 
to obtain \eqref{g3}.
\end{proof}

Let us establish a useful identity.

\begin{lemma}\label{Lem:auxiliary_fact}
Let $\br := (\rho_u)_{u \in \U_d}\in \R^{\U_d}$. 
For $p,q \in [d]$ with $q \le p$
and $u \subseteq [q]$ we have
\[
\sum_{\substack{v\in \U_q \\ u \subseteq v}} 
\sum_{\substack{w\in \U_p \\ v \subseteq w}}
(-1)^{|v|} \rho_w 
=
(-1)^{|u|} 
\sum_{v \subseteq [p] \setminus [q]} \rho_{u \cup v}.
\]
\end{lemma}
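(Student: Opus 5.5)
Since all sums involved are finite ($p,q \in [d]$ are finite indices), I will simply interchange the order of summation. I start from the left-hand side and sum over $w$ first. The pairs $(v,w)$ with $u \subseteq v \subseteq [q]$ and $v \subseteq w \subseteq [p]$ are exactly the pairs with $u \subseteq w \subseteq [p]$ and $u \subseteq v \subseteq w \cap [q]$. Hence the left-hand side equals
\[
\sum_{\substack{w\in \U_p \\ u \subseteq w}} \rho_w
\sum_{\substack{v \\ u \subseteq v \subseteq w \cap [q]}} (-1)^{|v|}.
\]

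Next I evaluate the inner sum. Writing $v = u \cup x$ with $x \subseteq (w \cap [q]) \setminus u$ gives
\[
(-1)^{|u|} \sum_{x \subseteq (w\cap[q])\setminus u} (-1)^{|x|}.
\]
This is the same alternating-sum argument as in the proof of Lemma~\ref{l1}: the sum equals $1$ if $(w\cap [q])\setminus u = \emptyset$ and $0$ otherwise.

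It remains to identify the surviving $w$. Since $u \subseteq [q]$ and $u \subseteq w$, the condition $(w \cap [q]) \setminus u = \emptyset$ means $w \cap [q] = u$. Together with $w \subseteq [p]$, this says exactly that $w = u \cup v'$ with $v' \subseteq [p]\setminus[q]$, and this decomposition is unique because $u$ and $[p]\setminus[q]$ are disjoint. So the double sum collapses to
\[
(-1)^{|u|}\sum_{v' \subseteq [p]\setminus[q]} \rho_{u\cup v'},
\]
which is the claim.

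There is no real obstacle here. The only step needing care is the bookkeeping in the reindexing, namely checking that $w \cap [q] = u$ together with $w \subseteq [p]$ characterizes the sets $u \cup v'$ with $v' \subseteq [p]\setminus[q]$ bijectively. This relies on $u \subseteq [q]$ and $q \le p$.
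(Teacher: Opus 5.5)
Your proposal is correct and follows essentially the same route as the paper: interchange the finite sums, evaluate the inner alternating sum as $(-1)^{|u|}$ when $w\cap[q]=u$ and $0$ otherwise, and collapse. The only difference is that you write out the bijection $w \leftrightarrow u\cup v'$ with $v'\subseteq[p]\setminus[q]$, which the paper leaves implicit.
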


\begin{proof}
We have 
\[
\sum_{\substack{v\in \U_q \\ u \subseteq v}} 
\sum_{\substack{w\in \U_p \\ v \subseteq w}}
(-1)^{|v|} \rho_w 
=
\sum_{\substack{w\in \U_p \\ u \subseteq w}}
\sum_{\substack{v\in \U_q \\ u \subseteq v \subseteq w}}
(-1)^{|v|} \rho_w 
=
\sum_{\substack{w\in \U_p \\ u \subseteq w}}
\rho_w
\sum_{\substack{v\in\U_q\\ u \subseteq v \subseteq w}} 
(-1)^{|v|}.
\]
For the innermost sum in the latter expression we obtain
\[
 \sum_{\substack{v\in\U_q\\ 
                 u \subseteq v \subseteq w }} 
(-1)^{|v|}
= (-1)^{|u|} \sum_{v' \subseteq (w \cap [q])\setminus u} (-1)^{|v'|}=
\begin{cases}
(-1)^{|u|} &\text{if $u = w \cap [q]$},\\
0 &\text{if $u \subsetneq w \cap [q]$.}
\end{cases}
\]
This establishes the claim.
\end{proof}

For the following result we refer to 
\citet[Lem.~1]{Matus94}, which deals with certain 
classes of convex cones in $\R^{\U_d}$, see also
\citet[4.18 and 4.19.(ii)]{Aigner79} for the unrestricted case
$\R^{\U_d}$.
For the convenience of the reader we present a proof of the result.

\begin{theo}\label{t1a}
Consider the case $d \in \N$. 
For every $C > 0$ the mapping $T^\up_{d,C}$ defines 
a bijection from $\SSS_{d,C}=\W_d$ onto 
\[
T^\up_{d,C}(\SSS_{d,C}) = \M_d
\]
with inverse mapping $T^\lo_{d,C}$.
\end{theo}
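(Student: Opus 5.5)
By \eqref{eq:C_OBdA_1} and the factors $C^{\pm 2|u|}$ in the definitions, it suffices to treat $C=1$. To see this, note that $T^\up_{d,C}\bg = T^\up_{d,1}\tilde\bg$ with $\tilde\gamma_v := C^{2|v|}\gamma_v$. Likewise $T^\lo_{d,C}\bg$ is obtained from $T^\lo_{d,1}\bg$ by multiplying the $u$-th entry by $C^{-2|u|}$. So I would fix $C=1$ and write $T^\up_d$, $T^\lo_d$.

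The plan has three steps. First, Lemma~\ref{cmd:claim_iii_neu} already gives $T^\up_d(\W_d)\subseteq \M_d$.

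Second, I would show $T^\lo_d\circ T^\up_d=\mathrm{id}$ on $\W_d$. Take $\bg\in\W_d$, put $\bg^\up:=T^\up_d\bg$, and apply \eqref{g2} to $\bg^\up$:
\[
(T^\lo_d\bg^\up)_u=(-1)^{|u|}\sum_{\substack{v\in\U_d\\ u\subseteq v}}\sum_{\substack{w\in\U_d\\ v\subseteq w}}(-1)^{|v|}\gamma_w .
\]
Now apply Lemma~\ref{Lem:auxiliary_fact} with $p=q=d$ and $\br=\bg$. The set $[p]\setminus[q]$ is empty, so the right-hand side of that lemma is $(-1)^{|u|}\gamma_u$. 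Hence $(T^\lo_d\bg^\up)_u=\gamma_u$. In particular $T^\up_d$ is injective.

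Third, I would prove surjectivity onto $\M_d$ together with $T^\up_d\circ T^\lo_d=\mathrm{id}$ on $\M_d$. Let $\bg\in\M_d$ and set $\be:=T^\lo_d\bg$.
\begin{itemize}
\item $\be$ is a weight: by definition $\eta_u=(\Delta_{[d]\setminus u}\bg)_u\ge 0$, since $\bg$ is completely monotone. So $\be\in\W_d=\SSS_d$.
\item $T^\up_d\be=\bg$: this is finite Möbius inversion on the Boolean lattice $\U_d$. By \eqref{g2},
\[
(T^\up_d\be)_u=\sum_{\substack{v\in\U_d\\ u\subseteq v}}\sum_{\substack{w\in\U_d\\ v\subseteq w}}(-1)^{|w|-|v|}\gamma_w .
\]
Interchanging the sums gives $\sum_{w\supseteq u}\gamma_w\sum_{u\subseteq v\subseteq w}(-1)^{|w|-|v|}$. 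The inner sum equals $1$ if $w=u$ and $0$ otherwise, by the same alternating-sum computation as in the proof of Lemma~\ref{Lem:auxiliary_fact}. So the result is $\gamma_u$.
\end{itemize}

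These steps together show that $T^\up_d$ is a bijection $\W_d\to\M_d$ with inverse $T^\lo_d$.

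There is no real obstacle here. The only genuinely structural input is the nonnegativity of $T^\lo_d\bg$, and that is exactly the definition of $\M_d$ applied with $v=[d]\setminus u$. The rest is careful bookkeeping: get the signs right in the double-sum interchange, and correctly reduce general $C$ to $C=1$.
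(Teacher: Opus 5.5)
Your proposal is correct and follows the paper's proof essentially step by step. It uses the reduction to $C=1$, then Lemma~\ref{cmd:claim_iii_neu} for $T^\up_d(\W_d)\subseteq\M_d$, then two applications of the alternating-sum identity from Lemma~\ref{Lem:auxiliary_fact} with $p=q=d$ (for the second composition you redo the interchange directly, while the paper invokes the lemma again, implicitly with $\rho_w=(-1)^{|w|}\gamma_w$), and your explicit check that $T^\lo_d\bg\ge\bn$ for $\bg\in\M_d$ is a detail the paper leaves implicit in declaring $T^\lo_{d,C}\colon\M_d\to\W_d$.
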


\begin{proof}
As it is easily verified,
it suffices to consider the case $C=1$.
According to Lemma~\ref{cmd:claim_iii_neu}
we have
$T^\up_{d}(\SSS_{d}) \subseteq \M_d$, and $\SSS_{d} = \W_d$
trivially holds true in the present case.

Let $\bg \in \SSS_d$ and $\bg^\up := T^\up_d \bg \in \M_d$.
We use \eqref{g2} and Lemma~\ref{Lem:auxiliary_fact} with 
$p := q := d$ to obtain 
\begin{align*}
\left(T^\lo_d \bg^\up\right)_u
&= 
(-1)^{|u|}
\sum_{\substack{v\in \U_d \\ u \subseteq v}} (-1)^{|v|} \bg^\up_v\\
&=
(-1)^{|u|}
\sum_{\substack{v\in \U_d \\ u \subseteq v}} 
\sum_{\substack{w\in \U_d \\ v \subseteq w}} 
(-1)^{|v|} 
\gamma_w\\
&=
\gamma_u
\end{align*}
for every $u \in \U_d$, i.e., $T^\lo_d \bg^\up = \bg$. In
particular, $T^\up_d$ is an injection.

Let $\bg \in \M_d$ and $\bg^\lo := T^\lo_d \bg$.
Again, we use \eqref{g2} and
Lemma~\ref{Lem:auxiliary_fact} with $p := q := d$ 
to obtain
\[
\left(T^\up_d \bg^\lo\right)_u 
= 
\sum_{\substack{v\in \U_d \\ u \subseteq v}} 
\gamma^{\lo}_v
=  
\sum_{\substack{v\in \U_d \\ u \subseteq v}}
\sum_{\substack{w\in \U_d \\ v \subseteq w}} (-1)^{|w|-|v|} \gamma_w
=
\gamma_u
\]
for every $u \in \U_d$, i.e., $T^\up_d \bg^\lo = \bg$.
In particular, $\M_d \subseteq T^\up_d (\SSS_d)$.
\end{proof}

Next, we consider the case $d = \infty$.
In addition to $\mN_d$ we consider the subsets
\[
\PP_d :=
\left\{ \bg \in \M_d : \sum_{v \in \U_d} \gamma_v < \infty \right\}
\]
and 
\begin{equation}\label{def_A_d}
\A_d:= 
\left\{\bg \in \M_d : 
\lim_{r \to \infty} \lim_{s \to \infty} 
\Delta_{[s] \setminus [r]} \bg = \bg
\right\}
\end{equation}
of $\M_d$.

\begin{theo}\label{t1b}
Consider the case $d = \infty$. For every $C > 0$ we have
\begin{equation}\label{range_T_up}
\PP_d \subsetneq \A_d = T^\up_{d,C} (\SSS_{d,C})
\subsetneq \mN_d \subsetneq \M_d,
\end{equation}
and the mapping $T^\up_{d,C}$ defines a bijection from $\SSS_{d,C}$ onto 
$\A_d$ with inverse mapping $T^\lo_{d,C}|_{\A_d}$. Furthermore,
\begin{equation}\label{range_T_lo}
T^\lo_{d,C}(\M_d)  = \SSS_{d,C},
\end{equation}
and
\begin{equation}\label{g77}
(T^{\up}_{d,C} \circ T^{\lo}_{d,C}) \bg \leq \bg 
\end{equation}
for every $\bg \in \M_d$ with equality if and only if $\bg \in \A_d$.
\end{theo}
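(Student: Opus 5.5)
Throughout we reduce to $C=1$: since $(T^\up_{d,C}\bg)_u=(T^\up_{d,1}\tilde\bg)_u$ with $\tilde\gamma_v:=C^{2|v|}\gamma_v$, and likewise $T^\lo_{d,C}\bg=(C^{-2|u|}(T^\lo_{d,1}\bg)_u)_u$, every statement transfers directly. The central computation uses \eqref{g7} and Lemma~\ref{Lem:auxiliary_fact}. For $\bg\in\M_d$ put $\bg^\lo:=T^\lo_d\bg$ and, for $u\subseteq[r]\subseteq[s]$, the finite approximations $\gamma^{(s)}_v:=(\Delta_{[s]\setminus v}\bg)_v\ge 0$. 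Applying Lemma~\ref{Lem:auxiliary_fact} with $q:=r$, $p:=s$, $\rho_w:=(-1)^{|w|}\gamma_w$, we obtain
\[
\sum_{\substack{v\in\U_r\\u\subseteq v}}\gamma^{(s)}_v=\sum_{w\subseteq[s]\setminus[r]}(-1)^{|w|}\gamma_{u\cup w}=\bigl(\Delta_{[s]\setminus[r]}\bg\bigr)_u .
\]
By Remark~\ref{r1}, $\gamma^{(s)}_v\downarrow\gamma^\lo_v$ as $s\to\infty$. Hence
\[
\sum_{\substack{v\in\U_r\\u\subseteq v}}\gamma^\lo_v=\lim_{s}\bigl(\Delta_{[s]\setminus[r]}\bg\bigr)_u,
\]
which is nondecreasing in $r$ and bounded by $\gamma_u$. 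Letting $r\to\infty$ gives $(T^\up_d\bg^\lo)_u=\lim_r\lim_s(\Delta_{[s]\setminus[r]}\bg)_u\le\gamma_u$. For $u=\emptyset$ this shows $\bg^\lo\in\SSS_d$ and proves \eqref{g77}, with equality exactly when $\bg\in\A_d$.

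Next we show $T^\lo_d T^\up_d=\mathrm{id}$ on $\SSS_d$. For $\bg\in\SSS_d$, formula \eqref{rep_dif_of_dif_gamma_up} gives $(\Delta_{[s]\setminus u}\bg^\up)_u=\sum_{w\supseteq u,\ w\cap([s]\setminus u)=\emptyset}\gamma_w$. This sum tends to $\gamma_u$ as $s\to\infty$ by dominated convergence, since the family is summable and every $w\ne u$ is eventually excluded. Thus $T^\up_d$ is injective. Moreover $T^\up_d(\SSS_d)\subseteq\A_d$: by the same formula, $\lim_s(\Delta_{[s]\setminus[r]}\bg^\up)_u=\sum_{w\supseteq u,\ w\subseteq[r]}\gamma_w$, which converges to $\gamma^\up_u$ as $r\to\infty$. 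Conversely, for $\bg\in\A_d$ the first paragraph gives $T^\up_d T^\lo_d\bg=\bg$. Together these give $\A_d=T^\up_d(\SSS_d)$ and the bijection with inverse $T^\lo_d|_{\A_d}$. For \eqref{range_T_lo}, one inclusion is the summability shown above; the other follows from $\SSS_d=T^\lo_d(T^\up_d\SSS_d)\subseteq T^\lo_d(\M_d)$. The inclusion $\A_d\subseteq\mN_d$ is Lemma~\ref{cmd:claim_iii_neu}. For $\PP_d\subseteq\A_d$, note that $\bg\in\PP_d\subseteq\SSS_d$, so $\bg':=T^\up_d\bg\in\A_d$. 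We then have to show that $\bg$ itself satisfies the limit condition. Here we use $\bg-\Delta_{[s]\setminus[r]}\bg\le\sum_{\emptyset\ne w\subseteq[s]\setminus[r]}\gamma_{u\cup w}$, which follows by telescoping through Lemma~\ref{l2} and monotonicity, because each step subtracts a term of the form $(\Delta\bg)_{u\cup w}\le\gamma_{u\cup w}$. This bound is at most the tail $\sum_{w\not\subseteq[r]}\gamma_{u\cup w}\to0$ as $r\to\infty$.

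The main obstacle is the strictness of the inclusions, which requires explicit examples.
\begin{itemize}
\item $\M_d\ne\mN_d$: take the constant weights $\bg\equiv1$. These are completely monotone, since $\Delta_v\bg=0$ for $v\ne\emptyset$, but do not tend to $0$.
\item $\mN_d\ne\A_d$: take $\gamma_u:=\prod_{j\in u}\gamma_j$ with constant $\gamma_j\equiv c\in(0,1)$. Then $\lim_u\gamma_u=0$ fails because $\max u\to\infty$ with $|u|=1$. Instead we would use product weights with $\gamma_j\to0$ but $\sum_j(1-\gamma_j)=\infty$, for instance $\gamma_j=1/2$ for all $j$ combined with a factor. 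A cleaner choice is $\gamma_u:=\mathbf 1[u=\emptyset]$-type limits: $\gamma_u:=\lim$ behaviour where $\Delta_{[s]\setminus[r]}\bg\to0$. Concretely, take $\gamma_u=\prod_{j\in u}a_j$ with $a_j\in(0,1)$, $a_j\to0$. Then $(\Delta_{[s]\setminus[r]}\bg)_u=\gamma_u\prod_{r<j\le s}(1-a_j)$. Choosing $\sum a_j=\infty$ makes this $0\ne\bg$, while $\lim_u\gamma_u=0$ holds because $a_j\to0$. So this $\bg\in\mN_d\setminus\A_d$.
\item $\PP_d\ne\A_d$: take the same product form with $\sum a_j<\infty$ but $a_j\to1^-$ impossible; instead take $\bg=T^\up_d\tilde\bg$ with $\tilde\gamma_{\{j\}}:=2^{-j}$ and all other entries $0$. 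Then $\gamma_\emptyset=1$, $\gamma_{\{j\}}=2^{-j}$, and all other $\gamma_u=0$, which is summable; so this does not separate. We instead use $\tilde\gamma_{\{1,j\}}:=2^{-j}$, which gives $\gamma_{\{1\}}=\sum_j2^{-j}$. A separating example needs $\sum_u\gamma^\up_u=\sum_v2^{|v|}\tilde\gamma_v=\infty$ while $\sum_v\tilde\gamma_v<\infty$, for example $\tilde\gamma_{[n]}:=2^{-n}$. Then $\bg^\up\in\A_d\setminus\PP_d$.
\end{itemize}
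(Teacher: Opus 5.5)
Your proposal is correct and follows essentially the paper's route: the same computation via Lemma~\ref{Lem:auxiliary_fact} and Remark~\ref{r1} giving $T^\up_d T^\lo_d\bg=\lim_r\lim_s\Delta_{[s]\setminus[r]}\bg\le\bg$ (Lemma~\ref{l31}), the same Lemma~\ref{cmd:claim_i}-based bound for $\PP_d\subseteq\A_d$, and essentially the same separating examples. The one local difference is that you get $T^\lo_d T^\up_d=\mathrm{id}$ on $\SSS_d$ and $T^\up_d(\SSS_d)\subseteq\A_d$ directly from \eqref{rep_dif_of_dif_gamma_up} by dominated convergence, which is shorter than the paper's proof of Lemma~\ref{l30}; in a final write-up, strip the abandoned attempts from the strictness list (e.g.\ the wrong condition $\sum_j(1-\gamma_j)=\infty$ and the non-separating $\tilde\gamma_{\{j\}}$, $\tilde\gamma_{\{1,j\}}$ trials) and keep only the concrete examples, which are valid.
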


The major findings from Theorems~\ref{t1a} and \ref{t1b} may be
reformulated in terms of finite measures on the power set of
$\U_d$, see Appendix~\ref{r8}. 
In this way we realize a particular instance of $\sigma$-continuity
in the definition of $\A_d$. 

For the proof of Theorem~\ref{t1b}, which is split into
several lemmata, it suffices to consider the case $C=1$.

\begin{lemma}\label{l31}
For $d = \infty$ and every $\bg \in \M_d$
we have $T^\lo_d \bg \in \SSS_d$ and
\[
\left(T^\up_d \circ T^\lo_d\right) \bg 
=
\lim_{r \to \infty} 
\lim_{s \to \infty} \left( \Delta_{[s] \setminus [r]} \bg \right)
\leq \bg.
\]
\end{lemma}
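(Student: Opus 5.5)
Set $\bg^\lo := T^\lo_d \bg$ with $C=1$, so that $\gamma^\lo_v = \lim_{s\to\infty} (\Delta_{[s]\setminus v}\bg)_v \ge 0$ by Remark~\ref{r1}. The plan is to compute partial sums of $T^\up_d\bg^\lo$ over sets contained in a finite window $[r]$, using the finite-dimensional identity of Lemma~\ref{Lem:auxiliary_fact}, and then let the window grow.

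\textbf{Step 1: the finite-window identity.} Fix $u\in\U_d$ and integers $\max u\le r\le s$. I will show
\begin{equation*}
\sum_{\substack{v\in\U_r\\ u\subseteq v}} \left(\Delta_{[s]\setminus v}\bg\right)_v
= \left(\Delta_{[s]\setminus[r]}\bg\right)_u .
\end{equation*}
By \eqref{g7}, $(\Delta_{[s]\setminus v}\bg)_v = \sum_{w\in\U_s,\,v\subseteq w}(-1)^{|w|-|v|}\gamma_w$. Summing over $v\in\U_r$ with $u\subseteq v$ and applying Lemma~\ref{Lem:auxiliary_fact} with $q:=r$, $p:=s$ and $\rho_w := (-1)^{|w|}\gamma_w$ gives
\begin{equation*}
\sum_{\substack{v\in\U_r\\ u\subseteq v}} (-1)^{|v|}\sum_{\substack{w\in\U_s\\ v\subseteq w}}(-1)^{|w|}\gamma_w
=(-1)^{|u|}\sum_{w\subseteq[s]\setminus[r]}(-1)^{|u|+|w|}\gamma_{u\cup w}.
\end{equation*}
The right-hand side equals $(\Delta_{[s]\setminus[r]}\bg)_u$, with the signs handled exactly as in the proof of Theorem~\ref{t1a}.

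\textbf{Step 2: the limit $s\to\infty$.} The sum on the left of Step 1 is finite, so for fixed $r$ each term converges to $\gamma^\lo_v$. Hence
\begin{equation*}
\sum_{\substack{v\in\U_r\\ u\subseteq v}}\gamma^\lo_v=\lim_{s\to\infty}\left(\Delta_{[s]\setminus[r]}\bg\right)_u .
\end{equation*}
This limit exists by Remark~\ref{r1}; strictly the remark concerns $[s]\setminus v$, but the same monotonicity argument via Lemma~\ref{cmd:claim_i} applies to $[s]\setminus[r]$. The limit is non-increasing in $s$ and bounded by $\gamma_u$.

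\textbf{Step 3: the limit $r\to\infty$.} All $\gamma^\lo_v\ge0$, so the left-hand side of Step 2 is non-decreasing in $r$. Its limit is therefore $\sum_{v\in\U_d,\,u\subseteq v}\gamma^\lo_v = (T^\up_d\bg^\lo)_u$, and this limit is bounded by $\gamma_u$. This yields both the double-limit formula and the inequality $T^\up_d T^\lo_d\bg\le\bg$.

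\textbf{Step 4: summability.} Taking $u=\emptyset$ in Step 3 gives $\sum_{v\in\U_d}\gamma^\lo_v\le\gamma_\emptyset<\infty$, that is, $\bg^\lo\in\SSS_d$.

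I expect the main obstacle to be Step 1, namely getting the signs and index ranges in Lemma~\ref{Lem:auxiliary_fact} to match. The only other delicate point is justifying the interchange of limits. It is harmless because the sums in Step 2 are finite, and in Step 3 monotone convergence of non-negative terms applies.
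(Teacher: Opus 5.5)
Your proposal is correct and follows the paper's proof essentially step by step. You obtain the finite-window identity from Lemma~\ref{Lem:auxiliary_fact} with $p:=s$, $q:=r$, pass to the limit $s\to\infty$ in a finite sum, take a monotone limit as $r\to\infty$ bounded by $\gamma_u$, and set $u:=\emptyset$ for summability; the only cosmetic difference is that you get monotonicity in $r$ from $\gamma^\lo_v\ge 0$, whereas the paper invokes Lemma~\ref{cmd:claim_i}, and both are valid.
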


\begin{proof}
Let $\bg \in \M_d$ and $\bg^\lo := T^\lo_d \bg$.
Consider $u\in \U_{d}$ and let $r \in [d]$ with 
$u \subseteq [r]$.
Then
\[
A(r,u) :=
\sum_{\substack{v\in \U_r \\ u \subseteq v}} \gamma^{\lo}_v
=   \lim_{s\to \infty} \sum_{\substack{v\in \U_r \\ u \subseteq v}}
\sum_{\substack{w\in \U_s \\ v \subseteq w}} (-1)^{|w|-|v|} \gamma_w,
\]
see \eqref{g3}.
We use 
Lemma~\ref{Lem:auxiliary_fact} with $p := s$ and $q := r$
together with Remark~\ref{r1} to obtain
\[
A(r,u)
= \lim_{s\to \infty} \sum_{v \subseteq [s] \setminus [r]}
(-1)^{|v|} \gamma_{u \cup v}
=
\lim_{s \to \infty} \left( \Delta_{[s] \setminus [r]} \bg \right)_u 
\in [0,\gamma_u].
\]
Furthermore, due to Lemma~\ref{cmd:claim_i},
$A(r,u) \leq A(r+1,u)$. Consequently, 
\[
\sum_{\substack{v\in \U_d \\ u \subseteq v}} 
\gamma^{\lo}_v
= \lim_{r \to \infty} A(r,u) \leq \gamma_u.
\]
Take $u := \emptyset$ to conclude that 
$\bg^\lo \in \SSS_d$.
By definition, the left-hand side in the last estimate is
equal to $(T^\up_d \bg^\lo)_u$.
\end{proof}

According to Lemma~\ref{l31} we have \eqref{g77} and obtain for 
the set $\A_d$ of weights the representation
\begin{equation}\label{eq:repAd}
\A_d = \left\{ \bg \in \M_d : 
(T^{\up}_{d} \circ T^{\lo}_{d}) \bg = \bg \right\}.
\end{equation}

\begin{lemma}\label{l30}
For $d = \infty$ and every $\bg \in \SSS_d$ 
we have
\[
(T^\lo_d \circ T^\up_d) \bg = \bg.
\]
\end{lemma}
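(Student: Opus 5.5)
We take $C=1$ and fix $\bg \in \SSS_d$. Put $\bg^\up := T^\up_d \bg$. By Lemma~\ref{cmd:claim_iii_neu}, $\bg^\up \in \M_d$, so $T^\lo_d \bg^\up$ is defined. For fixed $u \in \U_d$ we then have to show
\[
\lim_{s \to \infty} \left(\Delta_{[s]\setminus u} \bg^\up\right)_u = \gamma_u .
\]
There are two natural routes. One uses \eqref{g3} together with Lemma~\ref{Lem:auxiliary_fact}, mimicking the finite-dimensional proof of Theorem~\ref{t1a}. The other, cleaner one uses the explicit formula \eqref{rep_dif_of_dif_gamma_up} from the proof of Lemma~\ref{cmd:claim_iii_neu}; we follow the second.

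Take $s \geq \max u$ and apply \eqref{rep_dif_of_dif_gamma_up} with $v := [s]\setminus u$. This gives
\[
\left(\Delta_{[s]\setminus u} \bg^\up\right)_u
= \sum_{\substack{w\in \U_d \\ u\subseteq w,\ w\cap([s]\setminus u)=\emptyset}} \gamma_w .
\]
The sets $w$ in this sum are exactly those with $w \cap [s] = u$, i.e.\ $w = u \cup w'$ with $w' \subseteq \N\setminus[s]$. So the expression equals
\[
\gamma_u + \sum_{\emptyset \neq w' \subseteq \N \setminus [s],\ w' \text{ finite}} \gamma_{u\cup w'} .
\]

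It remains to show that this remainder tends to $0$ as $s \to \infty$, and this is the only analytic step. Every set $u \cup w'$ in the remainder satisfies $\max(u\cup w') > s$. Since $\bg \in \SSS_d$, the family $(\gamma_w)_{w\in\U_d}$ is summable, so the remainder is bounded by the tail
\[
\sum_{w \in \U_d,\ \max w > s} \gamma_w .
\]
The sets $\{w : \max w \leq s\}$ increase and exhaust $\U_d$, and $\gamma \geq 0$, so by dominated (or monotone) convergence for the countable sum this tail tends to $0$. Hence the limit equals $\gamma_u$, and multiplying by $C^{-2|u|}=1$ gives $(T^\lo_d \bg^\up)_u = \gamma_u$ for every $u$, which is the claim.

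The main thing to check carefully is that \eqref{rep_dif_of_dif_gamma_up} was proved for all $v \in \U_d$, including $d=\infty$. It was: its inductive proof uses only Lemma~\ref{l2} and the absolute convergence of the sums, which holds because $\bg \in \SSS_d$.
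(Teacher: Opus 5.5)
Your proof is correct. The endpoint is the same as the paper's, but you obtain the key expression by a different lemma.

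The paper uses Lemma~\ref{cmd:claim_iii_neu} only to get $\bg^\up \in \M_d$. It then works with the alternating-sum representation \eqref{g3} and applies Lemma~\ref{Lem:auxiliary_fact} with $p:=r$, $q:=s$ to truncations at level $r$. This gives $(T^\lo_d \bg^\up)_u = \lim_{s\to\infty}\lim_{r\to\infty} a(r,s,u)$ with $a(r,s,u) = \sum_{v \subseteq [r]\setminus[s]} \gamma_{u\cup v}$. The paper finally shows $0 \le \lim_{r\to\infty} a(r,s,u) - \gamma_u \le b(s,u) \to 0$, which is your tail estimate.

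You instead read off the inner limit $\sum_{v \in \U_d,\ v\cap[s]=\emptyset} \gamma_{u\cup v}$ in one step from \eqref{rep_dif_of_dif_gamma_up} with $v = [s]\setminus u$. This avoids the alternating sums and the extra limit in $r$. Your version is shorter and exposes the measure-theoretic content noted in Appendix~\ref{r8}: the sets $\{w \in \U_d : w \cap [s] = u\}$ decrease to $\{u\}$, and your remainder estimate is continuity from above of the finite measure with density $\bg$.

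The paper's version has the advantage of running parallel to its proof of Theorem~\ref{t1a} for $d \in \N$. It also reuses Lemma~\ref{Lem:auxiliary_fact}, which the paper needs anyway for Lemma~\ref{l31}.
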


\begin{proof}
Let $\bg \in \SSS_d$ and $\bg^\up := T^\up_d \bg \in \M_d$, see
Lemma~\ref{cmd:claim_iii_neu}.
For $r,s \in [d]$ with $s \leq r$ and $u \in \U_d$ we put
\[
a(r,s,u) := 
\sum_{v \subseteq [r] \setminus [s]} 
\gamma_{u \cup v}.
\]
Let us first consider the case $u \subseteq [s]$. Then
the sequence $(a(r,s,u))_{r \in [d]}$ 
converges since $\bg \in \SSS_d$, and
Lemma~\ref{Lem:auxiliary_fact} with $p := r$ and $q :=s$ yields 
\[
\sum_{\substack{v\in \U_s \\ u \subseteq v}} (-1)^{|v|} \bg^\up_v
=
\lim_{r \to \infty}
\sum_{\substack{v\in \U_s \\ u \subseteq v}} 
\sum_{\substack{w\in \U_r \\ v \subseteq w}} 
(-1)^{|v|} 
\gamma_w
= (-1)^{|u|} \lim_{r \to \infty} a(r,s,u).
\]
If we now consider arbitrary $u \in \U_d$ and let $s$ tend to 
infinity, 
we clearly get $u \subseteq [s]$ for sufficiently large $s$. Hence we may
use \eqref{g3} to obtain, for every $u \in \U_d$,
\begin{equation}\label{g66}
\left(T^\lo_d \bg^\up\right)_u
= 
(-1)^{|u|} \lim_{s \to \infty}
\sum_{\substack{v\in \U_s \\ u \subseteq v}} (-1)^{|v|} \bg^\up_v
=
\lim_{s \to \infty} \lim_{r \to \infty} a(r,s,u).
\end{equation}
Moreover, if $u \subseteq [s]$,
\[
0 \leq a(r,s,u) - 
\gamma_u = 
\sum_{\emptyset \neq v \subseteq [r] \setminus [s]}
\gamma_{u \cup v} \leq b(s,u)
\]
with
\[
b(s,u) :=
\sum_{\substack{{v \in \U_d} \\ v \neq \emptyset, v \cap [s] = 
\emptyset}
}\gamma_{u \cup v},
\]
and therefore
\[
0 \leq \lim_{r \to \infty} a(r,s,u) - 
\gamma_u \leq b(s,u).
\]
For arbitrary $u \in \U_d$ we proceed as
follows. Since $\bg \in \SSS_d$ implies
\[
\sum_{\substack{v \in \U_d \\ v \cap u = \emptyset}}
\gamma_{u \cup v} < \infty,
\]
we conclude that $\lim_{s \to \infty} b(s,u) = 0$, and therefore
\begin{equation}\label{g67}
\lim_{s \to \infty} \lim_{r \to \infty} a(r,s,u) = 
\gamma_u.
\end{equation}
Combining \eqref{g66} and \eqref{g67} we obtain
$T^\lo_d \bg^\up =\bg$, as claimed.
\end{proof}

Lemma~\ref{l30} yields
the injectivity of $T^\up_d$.
We conclude that the mappings
$T^\up_d \colon \SSS_d \to T^\up_d(\SSS_d)$ and 
$T^\lo_d|_{T^\up_d(\SSS_d)} \colon T^\up_d(\SSS_d) \to \SSS_d$ are 
bijections and 
$T^\lo_d|_{T^\up_d(\SSS_d)}$ is the inverse of $T^\up_d$. 
In combination with \eqref{eq:repAd} this implies 
$T^\up_d(\SSS_d) \subseteq \A_d$. Moreover, we have
\eqref{range_T_lo}, since 
$T^\lo_d (\M_d) \subseteq \SSS_d$ by Lemma~\ref{l31}
and
$\SSS_d \subseteq T^\lo_d (\M_d)$ by Lemma~\ref{l30}.
For later reference we explicitly note that
\begin{equation}\label{g79}
T^\lo_d \left(T^\up_d(\SSS_d)\right) = \SSS_d.
\end{equation}

\begin{lemma}\label{l32}
For $d = \infty$ 
we have $\A_d \subseteq T^\up_d(\SSS_d)$.
\end{lemma}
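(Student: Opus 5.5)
Take $\bg \in \A_d$ and set $\bg^\lo := T^\lo_d \bg$. By Lemma~\ref{l31}, $\bg^\lo \in \SSS_d$, so $T^\up_d \bg^\lo$ is well defined. The same lemma gives
\[
(T^\up_d \circ T^\lo_d)\bg = \lim_{r \to \infty}\lim_{s\to\infty} \Delta_{[s]\setminus[r]}\bg .
\]
For $\bg \in \A_d$ the definition \eqref{def_A_d} says exactly that this double limit equals $\bg$. Hence $T^\up_d \bg^\lo = \bg$, and therefore $\bg \in T^\up_d(\SSS_d)$. This is the whole argument; equivalently, it is the inclusion ``$\supseteq$'' read off from the representation \eqref{eq:repAd}.

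The steps in order are as follows. First I invoke Lemma~\ref{l31} to get $T^\lo_d\bg \in \SSS_d$, which puts it in the domain of $T^\up_d$. Second, I use the identity from Lemma~\ref{l31} for the composition $T^\up_d \circ T^\lo_d$. Third, I insert the defining property of $\A_d$.

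There is no real obstacle here. The only point needing care is that the limits in \eqref{def_A_d} and in Lemma~\ref{l31} are taken componentwise, for each fixed $u$, and in the same order: inner limit in $s$, outer limit in $r$. This matches the notion of convergence fixed in Section~\ref{sec:notation}.
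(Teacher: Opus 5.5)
Your proof is correct, and it is more direct than the paper's. The paper never names the preimage. It uses \eqref{range_T_lo} and \eqref{g79} to find some $\widehat{\bg} \in T^\up_d(\SSS_d)$ with $T^\lo_d \widehat{\bg} = T^\lo_d \bg$. It then applies the fixed-point characterization \eqref{eq:repAd} twice: to $\widehat{\bg}$, which lies in $\A_d$ by the previously established inclusion $T^\up_d(\SSS_d) \subseteq \A_d$, and to $\bg$. This gives $\widehat{\bg} = (T^\up_d \circ T^\lo_d)\widehat{\bg} = (T^\up_d \circ T^\lo_d)\bg = \bg$. In effect, the paper combines injectivity of $T^\lo_d$ on $\A_d$ with the inclusion $T^\lo_d(\A_d) \subseteq \SSS_d = T^\lo_d(T^\up_d(\SSS_d))$. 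It therefore relies on Lemma~\ref{l30}, through \eqref{g79} and the inclusion $T^\up_d(\SSS_d) \subseteq \A_d$, in addition to Lemma~\ref{l31}. Your version needs only Lemma~\ref{l31} and the definition \eqref{def_A_d}, and it names the preimage explicitly as $T^\lo_d \bg$. In fact, the last equality in the paper's chain, $(T^\up_d \circ T^\lo_d)\bg = \bg$, together with $T^\lo_d \bg \in \SSS_d$, already gives the claim, so the detour through $\widehat{\bg}$ is not needed. The paper's framing does have one use: it shows explicitly that two elements of $\A_d$ with the same image under $T^\lo_d$ must coincide. That is the injectivity implicit in the statement that $T^\lo_{d,C}|_{\A_d}$ inverts $T^\up_{d,C}$.
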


\begin{proof}
Let $\bg \in \A_d$.
Due to \eqref{range_T_lo} and \eqref{g79}, there exist weights 
$\widehat{\bg} \in T^\up_d(\SSS_d)$ such that 
$T^\lo_{d} \widehat{\bg} = T^\lo_d \bg$.  It follows
from \eqref{eq:repAd} that
\[
\widehat{\bg} = 
\left( T^\up_d \circ T^\lo_d \right) \widehat{\bg}
=
\left( T^\up_d \circ T^\lo_d \right) \bg = \bg,
\]
so that $\bg \in T^\up_d(\SSS_d)$.
\end{proof}

Lemma~\ref{l32} completes the proof of 
$\A_d = T^\up_d(\SSS_d)$.
It remains to establish the proper inclusions that are stated
in \eqref{range_T_up}.

\begin{lemma}\label{l35}
For $d = \infty$ we have
$T^\up_{d} (\SSS_{d}) \subsetneq \mN_d \subsetneq \M_d$.
\end{lemma}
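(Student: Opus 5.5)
The inclusions themselves are already available, so the work lies in exhibiting two separating examples. Lemma~\ref{cmd:claim_iii_neu} gives $T^\up_d(\SSS_d) \subseteq \mN_d$, and $\mN_d \subseteq \M_d$ holds by definition. To show that both inclusions are proper, I will construct one weight in $\M_d \setminus \mN_d$ and one in $\mN_d \setminus T^\up_d(\SSS_d)$. For the second example I will use the characterization \eqref{eq:repAd} together with $\A_d = T^\up_d(\SSS_d)$ (Lemma~\ref{l32} and the discussion preceding it). So it suffices to find $\bg \in \mN_d$ with $(T^\up_d \circ T^\lo_d)\bg \neq \bg$.

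\emph{First example.} Take the constant weights $\gamma_u := 1$ for all $u \in \U_d$. Then $\Delta_\emptyset \bg = \bg \geq \bn$. For $v \neq \emptyset$ we get
\[
(\Delta_v \bg)_u = \sum_{w \subseteq v} (-1)^{|w|} = 0 .
\]
Hence $\bg \in \M_d$. On the other hand, $\gamma_u = 1$ does not tend to $0$ along the directed set, so $\bg \notin \mN_d$.

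\emph{Second example.} Take product weights $\gamma_u := \prod_{j \in u} \gamma_j$ with $\gamma_j := 1/(j+1) \in (0,1)$.
\begin{itemize}
\item For $u \cap v = \emptyset$, a direct factorization gives
\[
(\Delta_v \bg)_u = \gamma_u \prod_{j \in v} (1-\gamma_j) \geq 0 ,
\]
and for $u \cap v \neq \emptyset$ the value is $0$ by Lemma~\ref{l1}. Hence $\bg \in \M_d$.
\item If $u \neq \emptyset$, then $\gamma_u \leq \gamma_{\max u} = 1/(\max u + 1)$, which tends to $0$. Hence $\bg \in \mN_d$.
\item For every $u$, the same factorization gives
\[
(\Delta_{[s]\setminus u} \bg)_u = \gamma_u \prod_{j \in [s]\setminus u} (1-\gamma_j).
\]
This tends to $0$ as $s \to \infty$, because $\sum_j \gamma_j = \infty$ forces $\prod_j (1-\gamma_j) = 0$. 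Thus $T^\lo_d \bg = \bn$, and therefore $(T^\up_d \circ T^\lo_d)\bg = \bn \neq \bg$.
\end{itemize}
By \eqref{eq:repAd} this shows $\bg \notin \A_d = T^\up_d(\SSS_d)$.

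The only step needing care is the choice of the second example. The naive candidate $\gamma = \mathbf 1_{\{\emptyset\}}$ does lie in $\mN_d$, but it equals $T^\up_d$ applied to itself, so it fails to separate the sets. Product weights with non-summable $\gamma_j \to 0$ avoid this: the infinite product kills $T^\lo_d$ while $\bg$ stays nonzero. The verification of complete monotonicity reduces to the product factorization above.
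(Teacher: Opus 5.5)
Your proof is correct and is essentially the paper's argument. The paper also uses constant weights to separate $\M_d$ from $\mN_d$, and product weights with non-summable $\gamma_j = 1/j$ to separate $\mN_d$ from $T^\up_d(\SSS_d)$, citing Proposition~\ref{l5}(a),(c). You instead verify the needed parts of that proposition inline for $\gamma_j = 1/(j+1)$, namely the factorization, $\lim_u \gamma_u = 0$, and $T^\lo_d \bg = \bn$, and then conclude via \eqref{eq:repAd}, which is exactly how Proposition~\ref{l5}(c) is itself proved.
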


\begin{proof}
According to Lemma~\ref{cmd:claim_iii_neu}, we have
$T^\up_{d} (\SSS_{d}) \subseteq \mN_d \subseteq \M_d$.

Let $\bg \in \W_d$ be constantly equal to $c > 0$. 
Since $\Delta_v \bg = 0$ for every 
non-empty set $v \in \U_d$, we have $\bg \in \M_d$,
but obviously $\bg \notin \mN_d$.

Let $\bg \in \W_d$ be given by $\gamma_u := \prod_{j \in u} 1/j$
for $u \in \U_d$.
Due to Proposition~\ref{l5} below we have 
$\bg \in \mN_d$, but $\bg \notin T^\up_d (\SSS_d)$.
\end{proof}

\begin{lemma}\label{l36}
For $d = \infty$ we have
$\PP_d \subsetneq T^\up_{d} (\SSS_{d})$.
\end{lemma}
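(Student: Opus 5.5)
We have to show two things: the inclusion $\PP_d \subseteq T^\up_d(\SSS_d)$, and that it is proper. By Lemma~\ref{l32} and \eqref{eq:repAd}, we have $T^\up_d(\SSS_d) = \A_d$. So for the inclusion it suffices to show that every $\bg \in \PP_d$ satisfies
\[
\lim_{r\to\infty}\lim_{s\to\infty} \bigl(\Delta_{[s]\setminus[r]}\bg\bigr)_u = \gamma_u
\]
for every $u \in \U_d$.

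\emph{Inclusion.} Fix $u$ and take $r \ge \max u$. Using the defining sum of $\Delta$, we write
\[
\gamma_u - \bigl(\Delta_{[s]\setminus[r]}\bg\bigr)_u
= -\sum_{\emptyset\neq w\subseteq[s]\setminus[r]}(-1)^{|w|}\gamma_{u\cup w}.
\]
The sets $u\cup w$ with $w \neq \emptyset$ and $w\cap[r]=\emptyset$ are pairwise distinct. Hence the absolute value of this expression is at most
\[
\sum_{\substack{v\in\U_d\\ v\not\subseteq[r]}}\gamma_v ,
\]
uniformly in $s$. This tail tends to $0$ as $r\to\infty$, because $\sum_{v}\gamma_v<\infty$ and the sets $\U_r$ exhaust $\U_d$. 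The inner limit in $s$ exists by Remark~\ref{r1} (the sequence is monotone and bounded). Hence the double limit equals $\gamma_u$, so $\bg\in\A_d = T^\up_d(\SSS_d)$.

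\emph{Properness.} We need some $\bg\in\SSS_d$ with $T^\up_d\bg\notin\PP_d$, that is, with $\sum_u \gamma^\up_u = \infty$. Interchanging the order of summation gives
\[
\sum_u\gamma^\up_u=\sum_v 2^{|v|}\gamma_v .
\]
So it suffices to choose weights with $\sum_v\gamma_v<\infty$ but $\sum_v 2^{|v|}\gamma_v=\infty$. A concrete choice is to concentrate the mass on one set per size: let $\gamma_{[n]} := 2^{-n}/(n+1)^2$ for $n \ge 0$, and $\gamma_v := 0$ otherwise. Then $\sum_v\gamma_v<\infty$, so $\bg \in \SSS_d$. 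On the other hand,
\[
\sum_v 2^{|v|}\gamma_v = \sum_n (n+1)^{-2}
\]
is still finite, so this choice fails and the weight must be shifted. Take instead $\gamma_{[n]} := 2^{-n}/(n+1)$. Then $\sum_v\gamma_v<\infty$ while $\sum_v 2^{|v|}\gamma_v=\sum_n 1/(n+1)=\infty$. Thus $T^\up_d\bg\in T^\up_d(\SSS_d)\setminus\PP_d$.

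The main care point is the interchange of sums in $\sum_u\gamma_u^\up$. All terms are nonnegative, so Tonelli justifies it, and each $v$ has exactly $2^{|v|}$ subsets $u$. The only other step needing care is the uniform tail bound in the inclusion part.
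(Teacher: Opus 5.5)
Your proof is correct and follows essentially the same route as the paper. For the inclusion you bound $\gamma_u - (\Delta_{[s]\setminus[r]}\bg)_u$ by a tail of the summable family $\bg$ (using the triangle inequality, where the paper appeals to Lemma~\ref{cmd:claim_i}) to get $\bg\in\A_d = T^\up_d(\SSS_d)$, and for properness you use $\sum_u\gamma^\up_u=\sum_v 2^{|v|}\gamma_v$ with mass concentrated on the initial segments $[n]$, just as the paper does with $\gamma_{[s]}:=s^{-2}$; your working choice $\gamma_{[n]}:=2^{-n}/(n+1)$ is valid, but the abandoned first attempt should be removed from a final write-up.
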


\begin{proof}
Let $\bg \in \M_d$.
We use Lemma~\ref{cmd:claim_i} to obtain
\begin{equation*}
0 \leq 
\left( \Delta_\emptyset \bg \right)_u -
\left( \Delta_{[s] \setminus [r]} \bg \right)_u 
= \gamma_u -
\left( \Delta_{[s] \setminus [r]} \bg \right)_u 
\leq \sum_{\substack{v \in \U_d \\  
v \neq \emptyset,v \cap [r] = \emptyset}} \gamma_{u\cup v}.
\end{equation*}
For $\bg \in \PP_d$
the right-hand side in the last estimate tends to zero for 
$r$ tending to $\infty$,
so that $\bg \in \A_d = T^\up_d(\SSS_d)$.

Let $\bg \in \SSS_d$, i.e.,
\[
\sum_{v\in \U_{d}} 
\gamma_v < \infty.
\]
Note that the weights $T^\up_d \bg$ are summable 
if and only if 
\begin{equation*}
\sum_{v\in \U_{d}} 2^{\left|v\right|}\gamma_v < \infty.
\end{equation*}
For $\bg \in \W_d$ given by
$\gamma_u := s^{-2}$ for 
$u := \{1,\dots,s\}$ with $s \in \N$ and $\gamma_u := 0$
otherwise we thus obtain $\bg \in \SSS_d$ and $T^\up_d \bg \not\in \PP_d$.
\end{proof}

This completes the proof of Theorem~\ref{t1b}.

We continue to study the case $d=\infty$, and we put 
$$
\bg^{1/\tau} := (\gamma_u^{1/\tau})_{u \in \U_d}
$$ 
for $\bg \in \W_d$ and $\tau > 0$. 
The following lemma allows us to
relate the decay of weights 
$\bg \in \SSS_{d,C}$ to the decay of 
the transformed weights
$T^\up_{d,C} \bg$, see Section~\ref{Subsec:Decay}.

\begin{lemma}\label{l3}
Assume that $d=\infty$. Let $\bg \in \SSS_{d,C}$ with 
$C>0$. Put $\bg^\up := T^\up_{d,C} \bg$. We have
\[
\sum_{u \in \U_d} \left(\gamma_u^\up\right)^{1/\tau} < \infty 
\quad \Rightarrow \quad
\bg^{1/\tau} \in \SSS_{d,C^{1/\tau}}
\]
for every $\tau > 0$ and
\[
\bg^{1/\tau} \in \SSS_{d, \sqrt{2} C^{1/\tau}}
\quad \Rightarrow \quad
\sum_{u \in \U_d} \left(\gamma_u^\up\right)^{1/\tau} < \infty 
\]
for every $\tau \geq 1$.
\end{lemma}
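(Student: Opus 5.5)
The plan is to compare $\gamma^\up_u$ termwise with the summands $C^{2|v|}\gamma_v$ in its definition. The first implication will use only the single diagonal term $v=u$. The second will use the subadditivity of $t \mapsto t^{1/\tau}$ for $\tau \geq 1$, followed by an interchange of summation.

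For the first implication, fix $\tau>0$ and $u \in \U_d$. Since all summands are non-negative, keeping only the term $v=u$ in $\gamma^\up_u = \sum_{v \supseteq u} C^{2|v|}\gamma_v$ gives $\gamma^\up_u \geq C^{2|u|}\gamma_u$. Because $t\mapsto t^{1/\tau}$ is non-decreasing on $[0,\infty)$, we get
\[
\left(\gamma^\up_u\right)^{1/\tau} \geq \left(C^{1/\tau}\right)^{2|u|} \gamma_u^{1/\tau}.
\]
Summing over $u \in \U_d$ shows $\sum_{u} (C^{1/\tau})^{2|u|}\gamma_u^{1/\tau} < \infty$, which is exactly $\bg^{1/\tau} \in \SSS_{d,C^{1/\tau}}$.

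For the second implication, let $\tau \geq 1$, so that $p := 1/\tau \in (0,1]$. The key tool is the inequality $\left(\sum_i a_i\right)^p \leq \sum_i a_i^p$ for countable families of non-negative reals. For finitely many terms this follows from concavity of $t \mapsto t^p$ together with $0^p = 0$, which makes the function subadditive. For countably many terms it follows by passing to the limit along finite partial sums, using continuity of $t\mapsto t^p$. Applying this to $\gamma^\up_u$ yields
\[
\left(\gamma^\up_u\right)^{1/\tau} \leq \sum_{\substack{v \in \U_d\\ u \subseteq v}} C^{2|v|/\tau}\gamma_v^{1/\tau}.
\]
Summing over $u$ and interchanging the order of summation, which is legitimate by Tonelli since all terms are non-negative, gives
\[
\sum_{u\in\U_d} \left(\gamma^\up_u\right)^{1/\tau} \leq \sum_{v \in \U_d} \#\{u : u \subseteq v\}\, C^{2|v|/\tau}\gamma_v^{1/\tau}
= \sum_{v\in\U_d} \left(\sqrt{2}\, C^{1/\tau}\right)^{2|v|} \gamma_v^{1/\tau}.
\]
Here we used $\#\{u : u\subseteq v\} = 2^{|v|}$. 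The right-hand side is finite precisely by the hypothesis $\bg^{1/\tau} \in \SSS_{d,\sqrt2 C^{1/\tau}}$.

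No step is a serious obstacle. The only point needing care is the extension of subadditivity from finite to infinite sums, which is handled by the limit argument above. This step is also exactly where the restriction $\tau \geq 1$ enters.
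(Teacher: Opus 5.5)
Your proposal is correct and follows the paper's proof essentially verbatim. The first implication comes from the diagonal bound $\gamma^\up_u \geq C^{2|u|}\gamma_u$, and the second from subadditivity of $t \mapsto t^{1/\tau}$ (which the paper calls Jensen's inequality), interchanging the sums, and counting the $2^{|v|}$ subsets of $v$; your justification of countable subadditivity and of the interchange via Tonelli just makes explicit what the paper leaves implicit.
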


\begin{proof}
Let $\tau > 0$.
We use $\gamma_u^\up \ge C^{2|u|} \, \gamma_u$ to obtain
\[
\sum_{u\in \U_d} \left(\gamma^\up_u\right)^{1/\tau} 
\geq
\sum_{u\in\U_d} C^{2|u|/\tau}\, \gamma_u^{1/\tau},
\]
which yields the first implication. 

For the proof of the second implication in the case $\tau \ge 1$ 
we employ
an inequality sometimes referred to as Jensen's inequality. This inequality
states that $(\sum y_i)^p \le \sum y_i^p$, if we consider non-negative 
$y_i$ and $p\in {(0,1]}$. Therefore
\begin{align*}
\sum_{u\in \U_d} \left(\gamma^\up_u\right)^{1/\tau}
&= \sum_{u\in \U_d} 
\left(\sum_{\substack{v\in\U_d \\ u\subseteq v}} 
C^{2|v|}\, \gamma_v\right)^{1/\tau}
\le \sum_{u\in \U_d} 
\sum_{\substack{v\in\U_d \\ u\subseteq v}} 
C^{2|v|/\tau}\, \gamma_v^{1/\tau}\\
&= \sum_{v\in\U_d} C^{2|v|/\tau}\, \gamma_v^{1/\tau} 
\sum_{\substack{u\in\U_d \\ u\subseteq v}} 1
= \sum_{v\in\U_d} 2^{|v|}\, C^{2|v|/\tau}\, \gamma_v^{1/\tau}
\end{align*}
for every $\tau \geq 1$, which yields the second implication.
\end{proof}

\section{Particular Classes of Weights}
\label{SEC:Part_Weights}

In the sequel, we study some important classes of weights of
a particular structure, namely:
\begin{itemize}
\item[(i)]
\emph{Product weights}, which are of the form
\[
\gamma_u = \prod_{j\in u} \gamma_j
\]
for all $u\in \U_d$, where $(\gamma_j)_{j\in [d]}$ 
is any non-increasing sequence
of non-negative real numbers.
By convention the empty product is 
equal to one, i.e., $\gamma_{\emptyset} = 1$. 
\item[(ii)]
\emph{Product and order-dependent (POD) weights} 
of the form
\[
\gamma_u = \Gamma_{|u|} \prod_{j\in u} \gamma_j
\]
for all $u \in \U_d$ with $(\gamma_j)_{j \in [d]}$ as in (i) 
and with a sequence $(\Gamma_k)_{k \in [d] \cup \{0\}}$ of
non-negative real numbers that satisfies
for some constants $a, C_a >0$ the estimate
\[
\Gamma_k \le C_a (k!)^a 
\hspace{3ex}\text{for all $k\in [d] \cup \{0\}$.}
\]
Note that the latter property trivially holds true 
for $d \in
\N$. For POD weights as before with $d = \infty$ we put
\[
p := \decay((\gamma_j)_{j \in \N}). 
\]
\item[(iii)]
\emph{Finite-order weights of order $\omega \in \N$}, which satisfy
\begin{equation*}
\gamma_u = 0
\hspace{2ex}\text{for all $u\in\U_d$ with $|u|>\omega$.}
\end{equation*}
\end{itemize}

Weighted function spaces, and in particular, spaces
involving product weights have been introduced in \citet{SW98}.
Product and order-dependent weights are introduced in \citet{KSS12}
as a generalization of product weights. 
Finite-order weights are introduced for spaces of functions with a
finite number of variables in \citet{DSWW06}. Subclasses include, 
e.g., finite-diameter weights, see \citet{NW08}, finite-intersection 
weights, see \citet{Gne10}, and finite-projective dimension (FPD) 
weights, see \citet{DG12}.

\subsection{Complete Monotonicity}

At first,
we discuss whether $\bg \in \M_d$ 
for any $\bg$ of the form (i), (ii), or (iii).

\begin{rem}
Due to Lemma~\ref{cmd:claim_i}, $\bg \in \M_d$ implies, in particular, 
$\gamma_v \geq \gamma_w$ for all $v,w\in \U_d$ with $v \subset w$.
Obviously, this monotonicity property need not hold for 
finite-order weights. 

The same holds true for POD weights of the form (ii),
which has already been pointed out in \citet{DKS13}.
Indeed, assume that the POD weights are of the particular form
\begin{equation}\label{pod_weights_form_DKS}
\gamma_u = \left((|u|+\ell)!\right)^a  \prod_{j\in u} \gamma_j
\end{equation}
for some $\ell \in \N$ and for all $u \in \U_d$,
as considered in \citet[p.~208]{DKS13}.
Assume that $\gamma_j >0$ for all $j\in[d]$. Then
for $v := \{2,\dots,s\}$ and $w := \{1,\dots,s\}$
with $s \in [d]$  we have equivalence of 
$\gamma_v \geq \gamma_w$ and
\[
\gamma_1 \leq \frac{1}{\left(s+\ell \right)^a}.
\]
For $d \in \N$ this implies 
\[
\gamma_i \leq \frac{1}{\left(d+\ell \right)^a}
\]
for every $i \in [d]$, so that for POD weights of the 
form~\eqref{pod_weights_form_DKS} with $\bg \in \M_d$ 
and strictly positive $\gamma_i$, $i=1,\ldots, [d]$, the weights 
$\gamma_i$ have to be rather small. For $d = \infty$
POD weights of the form \eqref{pod_weights_form_DKS} with all 
$\gamma_j >0$ cannot be monotonically
decreasing and thus, in particular, 
do not satisfy $\bg\in\M_d$.   
\end{rem}

For product weights the property $\bg \in \M_d$,
as well as $\bg \in \mN_d$, $\A_d$, or $\PP_d$ in the case
$d=\infty$,
is easily characterized as follows.

\begin{prop}\label{l5}
Let $\bg$ be product weights according to (i). We have 
\begin{equation}\label{eq:diff_prod_weights}
\left(\Delta_v \bg\right)_u 
= \gamma_{u} \prod_{j\in v} (1 - \gamma_{j})
\end{equation}
for $u, v\in \U_d$ with $u\cap v = \emptyset$.
Moreover,
\begin{equation}\label{char_prod_weights_M_d}
\bg \in \M_d \quad \Leftrightarrow \quad 
\forall\, j \in [d] \colon 0 \leq \gamma_j \leq 1.
\end{equation}
If additionally $d=\infty$ and $\bg \in \M_d$, then we have the 
following equivalences:
\begin{itemize}
\item[{\rm (}a{\rm )}] 
$\bg \in \mN_d \Leftrightarrow \lim_{j\to \infty} \gamma_j = 0$,
\item[{\rm (}b{\rm )}] 
$T^\lo_d \bg = 0 \Leftrightarrow \sum_{j=1}^\infty \gamma_j = \infty$,
\item[{\rm (}c{\rm )}] 
$\bg \in \A_d \Leftrightarrow 
\sum_{j=1}^\infty \gamma_j <  \infty
\Leftrightarrow \bg \in \PP_d$. 
\end{itemize}
\end{prop}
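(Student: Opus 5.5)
The plan is to derive the difference formula \eqref{eq:diff_prod_weights} first and obtain every remaining claim from it, together with Lemma~\ref{l1}, Lemma~\ref{l31}, \eqref{eq:repAd}, and Lemma~\ref{l36}.

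\emph{The difference formula and \eqref{char_prod_weights_M_d}.} For $u\cap v=\emptyset$ and $w\subseteq v$ we have $\gamma_{u\cup w}=\gamma_u\prod_{j\in w}\gamma_j$. Hence
\[
\left(\Delta_v\bg\right)_u=\gamma_u\sum_{w\subseteq v}\prod_{j\in w}(-\gamma_j)=\gamma_u\prod_{j\in v}(1-\gamma_j).
\]
If $u\cap v\neq\emptyset$, then $(\Delta_v\bg)_u=0$ by Lemma~\ref{l1}. So when all $\gamma_j\in[0,1]$, every difference is non-negative and $\bg\in\M_d$. Conversely, take $u=\emptyset$ and $v=\{j\}$: this gives $1-\gamma_j\ge 0$, while $\gamma_j\ge 0$ holds because $\bg\in\W_d$.

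\emph{Part (a), $d=\infty$.} Since $(\gamma_j)$ is non-increasing with values in $[0,1]$, it has a limit. For $u\neq\emptyset$ we have $\gamma_u\le\gamma_{\max u}$, and along the directed set $\max u\to\infty$ this gives $\lim_u\gamma_u=0$ whenever $\gamma_j\to 0$. Conversely, singletons $u=\{j\}$ show that $\lim_u\gamma_u=0$ forces $\gamma_j\to0$.

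\emph{Part (b).} By the definition of $T^\lo_d$ and the formula above, for $s\ge\max u$,
\[
(T^\lo_d\bg)_u=\gamma_u\lim_{s\to\infty}\prod_{j\in[s]\setminus u}(1-\gamma_j).
\]
Apply the classical criterion for infinite products: $\prod(1-\gamma_j)=0$ iff $\sum\gamma_j=\infty$, valid for $\gamma_j\in[0,1]$. The case where some $\gamma_j=1$ needs a separate look. Then $\sum\gamma_j=\infty$; the product with $u=\emptyset$ contains the factor $0$, and for general $u$, because the sequence is monotone, $\gamma_j=1$ for all $j\le j_0$. If $j_0<\infty$, a factor with $j\notin u$ eventually vanishes. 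If $j_0=\infty$, then $\gamma\equiv1$, all factors are $0$, and $T^\lo\bg=0$. In the remaining case all $\gamma_j<1$, and the product over $j\notin u$ is zero iff $\sum\gamma_j=\infty$, because removing finitely many positive factors does not affect vanishing. Since $\gamma_\emptyset=1$, this shows $T^\lo_d\bg=0$ iff $\sum\gamma_j=\infty$.

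\emph{Part (c).} First, $\sum_{v}\gamma_v=\prod_{j}(1+\gamma_j)$, which is finite iff $\sum\gamma_j<\infty$; this gives $\PP_d$ membership. Second, $\PP_d\subseteq\A_d$ by Lemma~\ref{l36}. Finally, suppose $\bg\in\A_d$ but $\sum\gamma_j=\infty$. Then (b) gives $T^\lo_d\bg=0$, and \eqref{eq:repAd} yields $\bg=(T^\up_d\circ T^\lo_d)\bg=0$. This contradicts $\gamma_\emptyset=1$. The three conditions are therefore equivalent.

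The main obstacle is the infinite-product step in (b): handling factors equal to $0$ and checking that the equivalence holds uniformly in $u$.
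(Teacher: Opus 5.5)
Your treatment of \eqref{eq:diff_prod_weights}, \eqref{char_prod_weights_M_d}, (a) and (c) is correct and matches the paper. Part (b), however, has a genuine gap when $\gamma_j=1$ for finitely many, but at least one, $j$.

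The root of the problem is the criterion $\prod_j(1-\gamma_j)=0 \Leftrightarrow \sum_j\gamma_j=\infty$. It holds for $\gamma_j\in[0,1)$, not for $\gamma_j\in[0,1]$. Your separate treatment of factors equal to $1$ then makes two false claims:
\begin{itemize}
\item[(1)] that some $\gamma_j=1$ forces $\sum_j\gamma_j=\infty$;
\item[(2)] that for $j_0<\infty$ a vanishing factor with $j\notin u$ eventually appears. This fails for every $u\supseteq[j_0]$.
\end{itemize}
Concretely, take $\gamma_1:=1$ and $\gamma_j:=0$ for $j\ge 2$. Then $\sum_j\gamma_j=1$ and $(T^\lo_d\bg)_\emptyset=0$, but $(T^\lo_d\bg)_{\{1\}}=C^{-2}\neq 0$. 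In this case both sides of (b) are false, while your argument asserts both are true. It also shows that testing only at $u=\emptyset$, as in your final sentence, cannot establish $T^\lo_d\bg=0\Rightarrow\sum_j\gamma_j=\infty$.

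The missing idea, which is how the paper proceeds, is to test at $u_1:=\{j\in\N:\gamma_j=1\}$.
\begin{itemize}
\item[(i)] If $u_1$ is infinite, monotonicity gives $u_1=\N$. Then both sides of (b) hold.
\item[(ii)] If $u_1$ is finite, then $\gamma_{u_1}=1$ and every factor $1-\gamma_j$ with $j\notin u_1$ is strictly positive. So $(T^\lo_d\bg)_{u_1}=0$ forces $\lim_{s}\prod_{j\in[s]\setminus u_1}(1-\gamma_j)=0$. Hence $\sum_{j\notin u_1}\gamma_j=\infty$, and so $\sum_j\gamma_j=\infty$.
\end{itemize}
For the converse direction, the paper avoids any case split. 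The bound $0\le\prod_{j\in v}(1-\gamma_j)\le\exp\bigl(-\sum_{j\in v}\gamma_j\bigr)$ remains valid when some factors vanish. It gives $\lim_{s}\prod_{j\in[s]\setminus u}(1-\gamma_j)=0$ for all $u$ simultaneously once $\sum_j\gamma_j=\infty$.

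Your part (c) only uses the direction $\sum_j\gamma_j=\infty\Rightarrow T^\lo_d\bg=0$, so it goes through unchanged once (b) is repaired in this way.
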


\begin{proof}
For product weights $\bg$ according to (i) we have
\[
\left(\Delta_v \bg\right)_u 
= \sum_{w\subseteq v} (-1)^{|w|} \gamma_{u\cup w}
= \gamma_{u} \sum_{w\subseteq v} (-1)^{|w|} \prod_{j\in w} \gamma_{ j}
= \gamma_{u} \prod_{j\in v} (1 - \gamma_{j})
\]
for $u, v\in \U_d$ with $u\cap v = \emptyset$, which yields 
\eqref{eq:diff_prod_weights} and, due to the definition of $\M_d$, 
also \eqref{char_prod_weights_M_d}.

Now let $d=\infty$ and $\bg \in \M_d$. Concerning (a): Note that due 
to \eqref{char_prod_weights_M_d} we have $\gamma_u \le \gamma_{\max u}$ 
for all $\emptyset \neq u\in \U_d$. Hence 
$\lim_{j \to \infty} \gamma_j = 0$ implies 
$\lim_u \gamma_u = 0$, i.e., $\bg \in \mN_d$.  
The reverse implication follows trivially from 
$\gamma_j = \gamma_{\{j\}}$ for all $j\in\N$.

Concerning (b): Due to
\eqref{eq:diff_prod_weights} we have
\begin{equation*}
C^{2|u|} \left( T^\lo_d \bg \right)_u = 
\lim_{s\to \infty} \left( \Delta_{[s] \setminus u} \bg \right)_u
= 
\gamma_u \lim_{s\to \infty} \prod_{j\in [s]\setminus u} (1-\gamma_j).
\end{equation*}

Let $u_1 := \{ j \in \N : \gamma_j = 1\}$.
If $u_1$ is an infinite set (which is equivalent to $u_1 = \N$), then
$\sum_{j\in\N} \gamma_j = \infty$ and $T^\lo_d \bg=0$.

So let us assume that $u_1$ is finite.
Suppose first that $T^\lo_d \bg=0$. Then we choose $u=u_1$ and obtain
\begin{equation}\label{eq:lim_prod_0_u1}
\lim_{s\to \infty} \prod_{j\in [s]\setminus u_1} (1- \gamma_j) = 0,
\end{equation}
as $\gamma_{u_1}\neq 0$. Moreover, 
\eqref{eq:lim_prod_0_u1} is equivalent to
\[
 \lim_{s\to \infty} \sum_{j\in [s]\setminus u_1} \gamma_j = \infty,
\]
which, in turn, is equivalent to $\sum_{j\in\N} \gamma_j =\infty$. 

Suppose on the other hand that $\bg$ is such that 
$\sum_{j\in\N} \gamma_j =\infty$. Then we have, for all $v\in \U_d$,
\[
 0 \le \prod_{j\in v} (1-\gamma_j) \le 
\exp \left(-\sum_{j\in v}\gamma_j\right), 
\]
and thus, for all $u\in\U_d$, 
\[
 \lim_{s\to \infty} \prod_{j\in [s]\setminus u} (1- \gamma_j) = 0.
\]
This completes the proof of (b).

Concerning (c): Recall $\bg \neq 0$. If 
$\bg \in \A_d$, then Theorem~\ref{t1b}
assures that $\left(T^\up_d \circ T^\lo_d \right) \bg = \bg$. 
Hence $T^\lo_d \bg \neq 0$ and (b) implies 
$\sum_{j\in\N} \gamma_j < \infty$. In case we have 
$\sum_{j\in\N} \gamma_j < \infty$, we obtain 
$\sum_{u\in \U_d} \gamma_u  = 
\prod_{j\in\N} (1 + \gamma_j) < \infty$, 
i.e., $\bg \in \PP_d$. Finally, $\PP_d \subseteq
\A_d$, see Theorem~\ref{t1b}.
\end{proof}

\subsection{Summability}

Next, we discuss whether $\bg \in \SSS_{d,C}$ for any 
$\bg$ of the form (i), (ii), or (iii) in the non-trivial 
case $d=\infty$.

To do so for POD weights, we need 
the following result,
which is essentially a variation of the statement in 
\citet[Thm. 5]{DG12}.

\begin{lemma}\label{prel4}
Assume that $d = \infty$. Let $C>0$ and let $\bg$ be
POD weights with $a$ and $p$ according to (ii).
Then we have
\begin{equation}\label{iq:trivial_lower_bound}
\sum_{\substack{u \subseteq [j] \\ j\in u}}
C^{2|u|} \gamma_u \ge C^2  \Gamma_1 \gamma_j
\end{equation}
for all $j\in\N$. Furthermore, 
if either 
\begin{equation}\label{cond_p>a}
p>a \hspace{3ex}\text{and}\hspace{3ex} \sum_{j\in\N}\gamma_j < \infty
\end{equation}
or 
\begin{equation}\label{cond_p=a}
p=a \ge 1 \hspace{3ex}\text{and}\hspace{3ex}
\sum_{j\in\N} \left( C^{2} \gamma_j \right)^{1/p} < 1
\end{equation}
holds,  then there exists a constant $c >0$
such that 
\begin{equation}\label{dg_14}
\sum_{\substack{u \subseteq [j] \\ j\in u}}
C^{2|u|} \gamma_u \le c \gamma_j
\end{equation}
for all $j\in\N$, where $c$ may depend on $C$ and $\bg$.
\end{lemma}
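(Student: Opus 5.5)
The lower bound \eqref{iq:trivial_lower_bound} is immediate. All summands are non-negative, so we keep only the term $u=\{j\}$, which contributes $C^2\Gamma_1\gamma_j$.

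For the upper bound \eqref{dg_14} we first factor out $\gamma_j$. Every $u\subseteq[j]$ with $j\in u$ has the form $u=\{j\}\cup w$ with $w\subseteq[j-1]$, and then $|u|=|w|+1$. Using $\Gamma_k\le C_a (k!)^a$ we obtain
\[
\sum_{\substack{u \subseteq [j] \\ j\in u}} C^{2|u|} \gamma_u
\le C_a C^2\gamma_j \sum_{k=0}^{j-1} ((k+1)!)^a C^{2k} \sum_{\substack{w\subseteq[j-1]\\|w|=k}} \prod_{i\in w}\gamma_i .
\]
The inner elementary symmetric sum is bounded by $\frac{1}{k!}\bigl(\sum_{i\in\N}\gamma_i\bigr)^k$ in the crude approach. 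For a sharper bound we use Jensen's inequality (as in the proof of Lemma~\ref{l3}) with exponent $q:=1/p'\ge1$, where $p'\le p$ is chosen suitably. This gives
\[
\sum_{|w|=k}\prod_{i\in w}\gamma_i \le \Bigl(\sum_{|w|=k}\prod_{i\in w}\gamma_i^{1/p'}\Bigr)^{p'} \le \Bigl(\frac{S^k}{k!}\Bigr)^{p'}, \qquad S:=\sum_{i\in\N}\gamma_i^{1/p'}.
\]
The sums $S$ are finite independently of $j$. Hence it suffices to show that
\[
\sum_{k\ge0} ((k+1)!)^a C^{2k}\frac{S^{kp'}}{(k!)^{p'}}
\]
converges. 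This series is uniform in $j$, and its value, times $C_aC^2$, gives the constant $c$.

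Case \eqref{cond_p>a}: choose $p'$ with $a<p'<p$, and additionally $p'\le 1$ if $p\le1$, so that Jensen applies. Then $S<\infty$ by the definition of the decay $p$. The $k$-th term behaves like $(k+1)^a (k!)^{a-p'}(C^2S^{p'})^k$. Since $a-p'<0$, the factorial decay beats any geometric growth, and the series converges. If $p>1$ but we want $p'\le1$, then $a<1$ and we can take $p'=1$. In that case $S=\sum_i\gamma_i<\infty$ by assumption, which is exactly why this hypothesis is included.

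Case \eqref{cond_p=a}: take $p'=p=a\ge1$. Here Jensen is not directly available, since $1/p\le1$ is the wrong direction. Instead we use $\prod_{i\in w}\gamma_i=\bigl(\prod_{i\in w}\gamma_i^{1/p}\bigr)^{p}$ together with $\sum x_i^{p}\le(\sum x_i)^{p}$ for $p\ge1$ and $x_i\ge0$, which is the correct inequality in this setting. This yields $\sum_{|w|=k}\prod_{i\in w}\gamma_i\le (S^k/k!)^{p}$ with $S=\sum_i\gamma_i^{1/p}$. The $k$-th term then becomes $(k+1)^a\bigl(C^{2/p}S\bigr)^{kp}$. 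The factor $C^{2k}$ has been absorbed as $\bigl(\sum_i (C^2\gamma_i)^{1/p}\bigr)^{kp}$. By \eqref{cond_p=a} the base is $q<1$, so the polynomially weighted geometric series converges.

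The same power-mean inequality with $p'\le1$, respectively $p'\ge1$, handles both cases. The main obstacle is choosing $p'$ so that the inequality runs in the correct direction while $S$ remains finite. In case \eqref{cond_p>a} with $p>1$ this forces $p'=1$ and thus the extra summability assumption. The borderline case \eqref{cond_p=a} is exactly where the factorials cancel and the smallness condition $<1$ becomes necessary.
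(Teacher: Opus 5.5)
Your strategy is sound, but the central inequality is stated with its direction reversed. The bound $\sum_{|w|=k}\prod_{i\in w}\gamma_i\le\bigl(\sum_{|w|=k}\prod_{i\in w}\gamma_i^{1/p'}\bigr)^{p'}$ is the inequality $\sum_w x_w^{p'}\le(\sum_w x_w)^{p'}$ with $x_w:=\prod_{i\in w}\gamma_i^{1/p'}$. It holds for $p'\ge 1$: this is Jensen's inequality from Lemma~\ref{l3} with exponent $1/p'\in(0,1]$. It fails for $p'<1$; for example, for $k=1$ and $\gamma_1=\gamma_2=t>0$ the left side is $2t$ and the right side is $2^{p'}t$. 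So ``Jensen with exponent $1/p'\ge1$'' is wrong. Your remark in case \eqref{cond_p=a} that $1/p\le 1$ is ``the wrong direction'' is also backwards: there you use exactly the valid inequality, the same one the paper uses, and that case is correct.

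The error does real damage in case \eqref{cond_p>a}. If $p=1$, which is allowed (e.g.\ $\gamma_j=1/(j\log^2(j+1))$), your rule $a<p'<p$ forces $p'<1$, and the first inequality in your display is false. Your closing claim that $p>1$ ``forces $p'=1$'' fails too when $a\ge1$. It would leave you with the bound $\sum_k(k+1)^a(k!)^{a-1}(C^2S)^k$, which diverges in general (always if $a>1$ and $S>0$).

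The repair is to keep $p'\ge1$ throughout.
\begin{itemize}
\item If $a<1$, take $p'=1$. No power-mean step is then needed, only $\sum_{|w|=k}\prod_{i\in w}\gamma_i\le S^k/k!$ with $S=\sum_i\gamma_i<\infty$. This is where the summability hypothesis enters.
\item If $a\ge1$, take $p'\in(a,p)$. Then $p'>1$ and $S=\sum_i\gamma_i^{1/p'}<\infty$.
\end{itemize}
This is exactly the paper's choice of $\tau$.

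With that fix your argument is complete, and it genuinely differs from the paper's. You factor out $\gamma_j$ first and apply the power-mean inequality separately for each order $k$. In case \eqref{cond_p>a} the factor $(k!)^{a-p'}$ then directly beats the geometric growth. The paper instead applies the inequality to the whole sum over $u$. It extracts the factor $(C^2\gamma_j)^{1/p}$ via the multinomial theorem and a telescoping difference of geometric series, which needs the smallness condition. It then reduces case \eqref{cond_p>a} to that situation by rescaling $C$ to $\widetilde C=C/\sqrt R$ and absorbing $R^{|u|}$ through $(|u|!)^\alpha x^{|u|}\le\exp(x^{1/(1-\alpha)})\,|u|!$. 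Your corrected version avoids both the telescoping identity and the rescaling step.
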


\begin{proof}
Since $\gamma_{\{j\}} = \Gamma_1 \gamma_j$, estimate 
\eqref{iq:trivial_lower_bound} obviously holds true.

To prove the second statement of the lemma, we use arguments very similar 
to the ones used in \citet{DG12} in the proof of Theorem~5.
We first prove estimate~\eqref{dg_14}
under the assumption that 
\begin{equation*}
p \ge \max (1,a) 
\hspace{3ex}\text{and}\hspace{3ex}
\sum_{j\in\N} \left( C^{2} \gamma_j \right)^{1/p} < 1. 
\end{equation*}
Since $p \ge \max(1, a)$, we may use
Jensen's inequality to obtain
\begin{align*}
\sum_{\substack{u \subseteq [j] \\ j\in u}}
C^{2|u|} \gamma_u &= 
\sum_{\substack{u \subseteq [j] \\ j\in u}}
\Gamma_{|u|} 
\prod_{i\in u} \left( C^2 \gamma_i \right) \\
&\le  
\sum_{\substack{u \subseteq [j] \\ j\in u}}
C_a \left( |u|! \right)^a \prod_{i \in u} \left( C^{2} \gamma_i \right) \\
&\le C_a \left( 
\sum_{\substack{u \subseteq [j] \\ j\in u}}
|u|!  \prod_{i \in u} \left( C^{2} \gamma_i \right)^{1/p} \right)^p.
\end{align*}
We now use for a multi-index 
$\nu =(\nu_1,\ldots,\nu_j) \in \N_0^j$ the notations 
$|\nu| := \nu_1 + \cdots + \nu_j$ and 
$\nu! := \prod_{i=1}^j \nu_i!$. 
Moreover, we use the shorthand
\[
T_p(C) :=  \sum_{i \in \N}  \left( C^{2} \gamma_i \right)^{1/p}.
\]
Then we obtain with the help of the 
multinomial formula and the formula for the (convergent) geometric series,
\begin{align}\label{assumptions_proof}
&
\sum_{\substack{u \subseteq [j] \\ j\in u}}
|u|!  \prod_{i \in u} \left( C^{2} \gamma_i \right)^{1/p}  
= \sum_{\substack{\nu \in \{0,1\}^j \\ \nu_j \neq 0}} 
\frac{|\nu|!}{\nu!}  \prod_{i = 1}^j 
\left( C^{2} \gamma_i \right)^{\nu_i/p} \notag \\
&\qquad \le \sum_{\substack{\nu \in \N_0^j \\ \nu_j \neq 0}} 
\frac{|\nu|!}{\nu!}  \prod_{i = 1}^j 
\left( C^{2} \gamma_i \right)^{\nu_i/p} \notag \\
&\qquad = \sum_{\kappa \in \N} \left(  
\sum_{\substack{\nu \in \N_0^j \\ |\nu| = \kappa}} \frac{\kappa!}{\nu!}  
\prod_{i = 1}^j \left( C^{2} \gamma_i \right)^{\nu_i/p} 
- \sum_{\substack{\nu \in \N_0^{j-1} \\ |\nu| = \kappa}} 
\frac{\kappa!}{\nu!}  
\prod_{i = 1}^{j-1} \left( C^{2} \gamma_i \right)^{\nu_i/p} \right)
\notag \\
&\qquad = \sum_{\kappa \in \N} \left(  
\left( \sum_{i=1}^j  \left( C^{2} \gamma_i \right)^{1/p}  \right)^\kappa 
-   
\left( \sum_{i=1}^{j-1}  \left( C^{2} \gamma_i \right)^{1/p}
 \right)^\kappa \, \right) \notag \\
&\qquad = \frac{1}{1 - \sum_{i=1}^j  
\left( C^{2} \gamma_i \right)^{1/p} }
- 
\frac{1}{1 - \sum_{i=1}^{j-1}  
\left( C^{2} \gamma_i \right)^{1/p} } \notag \\
&\qquad  \le \frac{ \left( C^{2} \gamma_j \right)^{1/p}}
{\left(1 - 
T_p(C)
\right)^2}.
\end{align}
Altogether, we get 
\begin{equation*}
\sum_{\substack{u \subseteq [j] \\ j\in u}}
C^{2|u|} \gamma_u 
\le C_a \left( 1-T_p(C) \right)^{-2p} C^2 
\gamma_j. 
\end{equation*}
In particular, we have shown
that condition~\eqref{cond_p=a} implies inequality~\eqref{dg_14}. 

Now assume that condition~\eqref{cond_p>a} holds.
We consider the non-trivial case $\gamma_1 > 0$, and we put
$\tau := 1$ if $a < 1$ and choose $\tau \in (a,p)$
if $a \geq 1$. 

We put $R := \left( 2  
\sum_{i \in \N} \left( C^{2}\gamma_i \right)^{1/\tau} \right)^{\! \tau}$ 
and note that, by the definition of $p$ in (ii),
we have $R\in (0,\infty)$. Moreover, we define
$\widetilde{C} := C/\sqrt{R}$, and we note that 
\begin{equation}\label{eq:technical}
(|u|!)^\alpha \cdot x^{|u|} \leq
\exp\left(x^{1/(1-\alpha)}\right) \cdot |u|!
\end{equation}
for every $x > 0$ and every $\alpha \in (0,1)$.

Due to Jensen's inequality we obtain
\begin{align*}
\sum_{\substack{u \subseteq [j] \\ j\in u}}
C^{2|u|} \gamma_u 
&\le   
\sum_{\substack{u \subseteq [j] \\ j\in u}}
C_a \left( |u|! \right)^a \prod_{i \in u} \left( C^{2} \gamma_i \right) \\
&=   C_a 
\sum_{\substack{u \subseteq [j] \\ j\in u}}
\left( |u|! \right)^a  R^{|u|} \prod_{i \in u} \left(
\widetilde{C}^{2} \gamma_i \right) \\
& \le	 C_a \left(
\sum_{\substack{u \subseteq [j] \\ j\in u}}
\left( |u|! \right)^{a/\tau}
R^{|u|/\tau} \prod_{i \in u} \left(
\widetilde{C}^{2} \gamma_i \right)^{1/\tau}
\right)^{\tau}\\
&\le  C_a 
\exp(\tau R^{1/(\tau-a)})
\left(
\sum_{\substack{u \subseteq [j] \\ j\in u}}
|u|!   \prod_{i \in u} \left( \widetilde{C}^{2} \gamma_i
\right)^{1/\tau} \right)^{\tau} \\
&\le 
C_a  \exp(\tau R^{1/(\tau-a)})
\left( 1-T_{\tau} (\widetilde{C}) \right)^{-2\tau} \widetilde{C}^2 
\gamma_j,
\end{align*}
where in the penultimate step we have used 
\eqref{eq:technical} with $\alpha=a/\tau$ 
and $x=R^{1/\tau}$, and in the last step we have used $\sum_{i \in \N} 
\left(\widetilde{C}^2 \gamma_i\right)^{1/\tau} = 1/2 < 1$ and 
\eqref{assumptions_proof} 
(with $p$ replaced by $\tau$ and $C$ replaced by $\widetilde{C}$).
\end{proof}

The following result is well known for product weights,
and for variants in the case of POD weights we refer to
\citet[Lem.~6.2]{KSS12} and \citet[Sec.~4.3]{DG12}.

\begin{prop}\label{l4}
Assume that $d = \infty$.

For product weights $\bg$ according to (i) and every $C > 0$ 
we have
\[
\bg \in \SSS_{d,C} \quad \Leftrightarrow \quad 
\sum_{j \in \N} \gamma_j < \infty.
\]

For POD weights $\bg$ with $a$ and $p$ according to (ii) 
and for every $C>0$ the following statement holds:
If $p>a$ and $\Gamma_1>0$, then 
\[
\bg \in \SSS_{d,C} \quad \Leftrightarrow \quad 
\sum_{j \in \N} \gamma_j < \infty.
\]
Furthermore, if $p=a \ge 1$, then 
$\sum_{j\in \N} \left( C^2 \gamma_j \right)^{1/p} 
< 1$ implies $\bg \in \SSS_{d,C}$.

For finite-order weights $\bg$ 
according to (iii) and every $C>0$ we have 
\[
\bg \in \SSS_{d,C} \quad \Leftrightarrow \quad \sum_{u \in \U_d}
\gamma_u < \infty.
\]
\end{prop}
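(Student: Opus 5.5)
The proof treats the three classes of weights separately. Throughout, the key device is to decompose the sum over $\U_d$ according to the largest index:
\[
\sum_{u \in \U_d} C^{2|u|}\gamma_u = \gamma_\emptyset + \sum_{j\in\N} \sum_{\substack{u\subseteq[j]\\ j\in u}} C^{2|u|}\gamma_u .
\]

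\emph{Product weights.} Here $\sum_{u\in\U_d} C^{2|u|}\gamma_u = \prod_{j\in\N}(1+C^2\gamma_j)$, which holds as an identity in $[0,\infty]$ by monotone convergence over the finite sets $\U_s$. The infinite product is finite if and only if $\sum_j C^2\gamma_j<\infty$. This follows from $1+x\le e^x$ for one direction and $\prod_j(1+x_j)\ge 1+\sum_j x_j$ for the other.

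\emph{POD weights with $p>a$ and $\Gamma_1>0$.} Suppose first that $\bg\in\SSS_{d,C}$. Restricting the sum to singletons gives $\sum_j C^2\Gamma_1\gamma_j\le \sum_u C^{2|u|}\gamma_u<\infty$, so $\sum_j\gamma_j<\infty$ because $\Gamma_1>0$; this is just \eqref{iq:trivial_lower_bound} summed over $j$. Conversely, if $\sum_j\gamma_j<\infty$, then condition \eqref{cond_p>a} holds. Lemma~\ref{prel4} then gives \eqref{dg_14}, and inserting it into the decomposition yields $\sum_u C^{2|u|}\gamma_u\le\Gamma_0+c\sum_j\gamma_j<\infty$.

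\emph{POD weights with $p=a\ge1$.} The assumption $\sum_j(C^2\gamma_j)^{1/p}<1$ is exactly \eqref{cond_p=a}, so \eqref{dg_14} holds again. Moreover $p\ge1$ gives $\gamma_j\le c'\gamma_j^{1/p}$, since $\gamma_j\to0$ and so $\gamma_j$ is bounded. Hence $\sum_j\gamma_j<\infty$, and the same summation as before finishes this case.

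\emph{Finite-order weights.} If $|u|\le\omega$, then $\min(1,C^{2\omega})\le C^{2|u|}\le\max(1,C^{2\omega})$. Since $\gamma_u=0$ for $|u|>\omega$, the weighted sum and the unweighted sum $\sum_u\gamma_u$ are comparable up to constants, which gives the equivalence at once.

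The only step that needs care is the POD case: we must check that the hypotheses of Lemma~\ref{prel4} are exactly met, including the summability of $\sum_j\gamma_j$ in the case $p=a$. Once that is verified, every case is a short rearrangement of nonnegative series.
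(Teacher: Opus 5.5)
Your proposal is correct and follows the paper's proof. The paper uses the product formula for (i), the singleton bound \eqref{iq:trivial_lower_bound} together with Lemma~\ref{prel4} summed over $j$ for (ii), and the two-sided comparison $\min\{1,C^{2\omega}\}\le C^{2|u|}\le\max\{1,C^{2\omega}\}$ for (iii). Your explicit check that $\sum_j\gamma_j<\infty$ also holds when $p=a\ge1$, which is needed to sum \eqref{dg_14} over $j$, fills in a step the paper leaves implicit.
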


\begin{proof}
In the case (i) the statement follows from
\[
\sum_{u \in \U_d} C^{2|u|} \gamma_u = \prod_{j \in \N} (1+ C^2 \gamma_j).
\]

Consider the case (ii). Assume first that $\bg \in \SSS_{d}$. 
If $\Gamma_1 >0$, then estimate~\eqref{iq:trivial_lower_bound} 
implies $\sum_{j\in \N} \gamma_j < \infty$. 
Assume now that either $p>a$ and $\sum_{j\in \N} \gamma_j < \infty$
or that $p=a \ge 1$ and $\sum_{j\in \N} \left( C^2 \gamma_j \right)^{1/p} 
< 1$. 
Due to Lemma~\ref{prel4} we have
estimate~\eqref{dg_14}, which yields $\bg \in \SSS_d$.

For finite-order weights $\bg$ of order $\omega \in \N$ we have 
\begin{equation*}
\min\{1,C^{2\omega}\} \sum_{u\in\U_d}\gamma_u \le  
\sum_{u\in\U_d} C^{2|u|} \gamma_u  
\le \max\{1,C^{2\omega}\} \sum_{u\in\U_d}\gamma_u,
\end{equation*} 
hence $\bg \in \SSS_d$ is satisfied if and  
only if the weights $(\gamma_u)_{u\in \U_d}$ are summable. 
\end{proof}

\subsection{Invariance Properties}

Now,
we consider the following question:
If $\bg$ belongs to one of the classes specified by (i), (ii), or
(iii), does $T^\up_{d,C} \bg$ or $T^\lo_{d,C} \bg$ 
belong to the same class?

\begin{rem}\label{r3}
Let $\bg$ be finite-order weights of order $\omega$. From the definition
of $T^\up_{d,C}$ and the representation of
$T^\lo_{d,C}$ according
to Lemma~\ref{cmd:claim_ii_neu} we see immediately that the
weights $T^{\up}_{d,C} \bg$ and $T^{\lo}_{d,C} \bg$ 
are again finite-order 
weights of the same order $\omega$ (as long as $\bg$ lies in
$\SSS_{d,C}$ or $\M_d$, respectively).
\end{rem}

\begin{rem}\label{r2}
Let $\bg$ be product weights according to (i). 
If $\bg \in \SSS_{d,C}$, then we have
\[
\Gamma_\emptyset := \prod_{j \in [d]} (1 + C^2 \gamma_j) < \infty,
\] 
which follows in the case $d=\infty$ from Proposition~\ref{l4}. 
Furthermore, for $u \in \U$,
\begin{align*}
\left(T^\up_{d,C} \bg\right)_u &= 
C^{2|u|} \gamma_u 
\sum_{\substack{v \in \U_d \\ v \cap u = \emptyset}} C^{2|v|} \gamma_v 
= 
C^{2|u|} \prod_{j \in u} \gamma_j 
\prod_{j \in [d] \setminus u} (1 + C^2 \gamma_j) \\
&=
\Gamma_\emptyset 
\prod_{j \in u} \frac{C^2 \gamma_j}{1+C^2\gamma_j}.
\end{align*}
We conclude that $T^\up_{d,C} \bg$ are product weights if
and only if $\Gamma_\emptyset = 1$, which is equivalent to
the trivial case $\gamma_j = 0$ for all $j\in[d]$. In any case, 
$T^\up_{d,C} \bg$ are very particular POD weights, namely
with $\Gamma_k := \Gamma_\emptyset$ for $k \in [d]$.

The natural question to ask next is whether the image 
$T^\up_{d,C} \bg$ of POD weights $\bg$ are again POD weights. Also 
the answer to this question is in general negative, as the 
following example illustrates. 

Let $d := 3$ and $C := 1$. Moreover, let $\bg$ be POD weights
with $\gamma_{\{1,2,3\}} > 0$. Suppose that the weights 
$\bg^\up := T^\up_{d,C} \bg$ are again POD weights, which 
means that they can be represented in the form 
\[
 \gamma^\up_u =
 \widetilde{\Gamma}_{|u|} \prod_{j\in u} \widetilde{\gamma}_j
\]
for some
$\widetilde{\gamma}_1,\widetilde{\gamma}_2,\widetilde{\gamma}_3 > 0$ 
and
$\widetilde{\Gamma}_0,\dots,\widetilde{\Gamma}_3 > 0$. We obtain
\[
\gamma_{\{1,2\}}^\up =
 \widetilde{\Gamma}_2 \widetilde{\gamma}_1 \widetilde{\gamma}_2 = 
\gamma_{\{1\}}^\up \gamma_{\{2\}}^\up
\frac{\widetilde{\Gamma}_2}{(\widetilde{\Gamma}_1)^2} 
\quad\text{and}\quad
\gamma_{\{1,3\}}^\up =
 \widetilde{\Gamma}_2 \widetilde{\gamma}_1 \widetilde{\gamma}_3 = 
\gamma_{\{1\}}^\up \gamma_{\{3\}}^\up
\frac{\widetilde{\Gamma}_2}{(\widetilde{\Gamma}_1)^2}, 
\]
which implies 
\begin{equation}\label{g55}
\frac{\gamma_{\{1,2\}}^\up}{\gamma_{\{2\}}^\up} 
= 
\frac{\gamma_{\{1,3\}}^\up}{\gamma_{\{3\}}^\up}.
\end{equation}

Specifically, for 
\begin{gather*}
 \gamma_1:=3,\ \gamma_2:=2,\ \gamma_3:=1,\\
 \Gamma_1:=3,\ \Gamma_2:=4,\ \Gamma_3:=5,
\end{gather*}
we obtain 
\[
\gamma_{\{2\}} = 6,\ 
\gamma_{\{3\}} = 3,\ 
\gamma_{\{1,2\}} = 24,\ 
\gamma_{\{1,3\}} = 12,\ 
\gamma_{\{2,3\}} = 8,\ 
\gamma_{\{1,2,3\}} = 30,\ 
\]
and therefore
\[
\gamma_{\{2\}}^\up = 68,\ \gamma_{\{3\}}^\up = 53,\
\gamma_{\{1,2\}}^\up = 54,\ \gamma_{\{1,3\}}^\up = 42,
\]
so that \eqref{g55} is not satisfied. We conclude that the weights
$\bg^\up$ are no longer POD weights.

Regarding $T^\lo_{d,C}$, it is also not hard to see that
for product weights according to (i) with
$\bg \in \M_d$ the transformed weights 
$\bg^\lo := T^\lo_{d,C} \bg$
are in general not even of POD form.
Indeed, 
let $d \geq 3$ and $\gamma_1 := 1$, $\gamma_2 := 1/2$, and
$\gamma_j := 0$ for $j \geq 3$.
Due to Proposition~\ref{l5}, we have $\bg \in \M_d$ and
\[
\left(\Delta_{[s] \setminus u} \bg\right)_u = 
\gamma_u \prod_{j\in [s] \setminus u} (1-\gamma_j) 
\]
for $s \in [d]$ and $u \in \U_d$. Therefore
\[
\gamma_u^{\lo} = 
C^{- 2|u|} \gamma_u 
\prod_{j\in [d]\setminus u} (1-\gamma_j)
\]
in both cases $d \in \N$ and $d = \infty$.
Assume now that the weights $\bg^\lo$ are of POD form, i.e.,
\[
\bg^\lo_u = \Gamma^\prime_{|u|} \prod_{j \in u} \gamma^\prime_j 
\]
for every $u \in \U_d$ with suitably chosen $\Gamma^\prime_k \geq
0$ and $\gamma_j^\prime \geq 0$.
Since $\gamma^\lo_{\{1\}} = C^{-2} /2$ 
and $\gamma^\lo_{\{2\}} = 0$, we conclude that
$\Gamma^\prime_1 > 0$ and $\gamma^\prime_2 = 0$.
It follows that $\gamma^\lo_u = 0$ whenever $2 \in u$.
However, $\gamma^\lo_{\{1,2\}} = C^{-4} /2$,
which is a contradiction.
\end{rem}

Therefore we are aiming at weaker results for product and for POD
weights.

\begin{prop}\label{l6}
Consider product weights $\bg$ according to (i) and any $C>0$.
If $\bg \in \SSS_{d,C}$ then 
\[
\be \leq T^\up_{d,C} \bg \leq c \cdot \be
\]
for the product weights $\be \in \SSS_{d,C}$ given by
\[
\eta_u := C^{2|u|} \gamma_u
\]
for $u\in\U_d$
and
\[
c := \prod_{j \in [d]} (1+C^2 \gamma_j) \in {[1,\infty)}.
\]
If $\bg \in \M_d$ then 
\[
c^\prime \cdot \bz \leq T^\lo_{d,C} \bg \leq \bz
\]
for the product weights $\bz \in \W_d$ given by
\[
\zeta_u := C^{-2|u|} \gamma_u
\]
for $u\in\U_d$ and
\[
c^\prime := \prod_{j \in [d]} (1-\gamma_j) \in [0,1].
\]
Furthermore, $\bz \in \SSS_{d,C}$ if and only if $\bg \in
\SSS_{d,C}$.
\end{prop}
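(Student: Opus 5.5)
Both parts follow from explicit formulas available for product weights, so the plan is to compute $T^\up_{d,C}\bg$ and $T^\lo_{d,C}\bg$ in closed form and compare factor by factor.

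For the first part, let $\bg\in\SSS_{d,C}$. Then $c=\prod_{j\in[d]}(1+C^2\gamma_j)$ is finite: for $d\in\N$ trivially, and for $d=\infty$ by Proposition~\ref{l4}. The computation in Remark~\ref{r2} gives
\[
\left(T^\up_{d,C}\bg\right)_u=\eta_u\prod_{j\in[d]\setminus u}(1+C^2\gamma_j).
\]
The product lies in $[1,c]$ because every factor is at least $1$. Hence $\be\le T^\up_{d,C}\bg\le c\,\be$ and $c\ge1$. Next, $\be$ are product weights with factors $C^2\gamma_j$, which form a non-increasing sequence. Moreover $\sum_u C^{2|u|}\eta_u=\prod_j(1+C^4\gamma_j)$, and this is finite iff $\sum_j\gamma_j<\infty$, which follows from $\bg\in\SSS_{d,C}$ by Proposition~\ref{l4}. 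So $\be\in\SSS_{d,C}$.

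For the second part, let $\bg\in\M_d$, so $0\le\gamma_j\le1$ by Proposition~\ref{l5}. Take the definition of $T^\lo_{d,C}$ together with \eqref{eq:diff_prod_weights}, where $u\cap([s]\setminus u)=\emptyset$, and pass to the limit in $s$ if $d=\infty$. This gives
\[
\left(T^\lo_{d,C}\bg\right)_u=\zeta_u\prod_{j\in[d]\setminus u}(1-\gamma_j).
\]
For $d=\infty$ this infinite product is the limit of a non-increasing sequence in $[0,1]$, so it exists. It is at most $1$, which gives the upper bound $T^\lo_{d,C}\bg\le\bz$. For the lower bound, the factors $1-\gamma_j$ with $j\in u$ lie in $[0,1]$, so
\[
\prod_{j\in[d]\setminus u}(1-\gamma_j)\ge\prod_{j\in[d]}(1-\gamma_j)=c'.
\]
Clearly $c'\in[0,1]$.

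The only step that needs care is the last claim, that $\bz\in\SSS_{d,C}$ iff $\bg\in\SSS_{d,C}$. Here $\bz$ are product weights with factors $\gamma_j/C^2$, and these are non-increasing. For $d\in\N$ both sets are all of $\W_d$, so there is nothing to show. For $d=\infty$ we have $\sum_u C^{2|u|}\zeta_u=\sum_u\gamma_u=\prod_j(1+\gamma_j)$ and $\sum_u C^{2|u|}\gamma_u=\prod_j(1+C^2\gamma_j)$. Each of these is finite iff $\sum_j\gamma_j<\infty$, which is the first statement of Proposition~\ref{l4}, so the two conditions are equivalent.
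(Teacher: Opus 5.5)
Your proposal is correct and follows the paper's proof essentially step by step. Both arguments read off the closed forms of $T^\up_{d,C}\bg$ and $T^\lo_{d,C}\bg$ (via Remark~\ref{r2} and \eqref{eq:diff_prod_weights}), bound the remaining product factorwise in $[1,c]$ resp.\ $[c',1]$ using Proposition~\ref{l5}, and reduce every summability claim to $\sum_j\gamma_j<\infty$; the only difference is that you write out the products $\prod_j(1+C^4\gamma_j)$ and $\prod_j(1+\gamma_j)$ explicitly where the paper simply cites Proposition~\ref{l4}.
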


\begin{proof}
For product weights $\bg \in \SSS_d$ we put 
$\bg^{\up} := T^\up_d \bg$. 
For any $u\in\U_d$,
\[
\gamma_u^{\up} = 
\eta_u \prod_{j \in [d] \setminus u} (1+C^2 \gamma_j),
\]
see Remark~\ref{r2}.
Furthermore,
\[
1 \leq \prod_{j \in [d] \setminus u} (1+C^2 \gamma_j) \leq c,
\]
which yields the bounds for $\bg^\up$ as claimed.
Obviously, the weights $\be$ are of product form,
and Proposition~\ref{l4} yields $c < \infty$ as well as $\be \in \SSS_d$.

For product weights $\bg \in \M_d$ we put $\bg^{\lo} := T^\lo_d \bg$. 
As shown in Remark~\ref{r2}, we have 
\[
 \gamma^\lo_u = C^{-2|u|}  \gamma_u \, 
\prod_{j\in [d]\setminus u} (1-\gamma_j).
\]
Proposition~\ref{l5} yields
\[
0 \leq c^\prime 
\leq \prod_{j\in [d]\setminus u} (1-\gamma_j) \leq 1.
\]
Hereby we obtain the bounds for $\bg^\lo$ as claimed, and obviously
the weights $\bz$ are of product form.
Proposition~\ref{l4} yields the equivalence of $\bz \in \SSS_d$
and $\bg \in \SSS_d$.
\end{proof}

\begin{rem}\label{Rem5}
We have a non-trivial lower bound for $T_{d,C}^\lo \bg$ in
Proposition~\ref{l6}, i.e., $c^\prime > 0$, if and only if
$0 \leq \gamma_j < 1$ for all $j \in [d]$ and $\bg$ is summable.
\end{rem}

\begin{prop}\label{l7}
Consider POD weights $\bg$ with $a$
according to (ii) and any $C>0$. We put
\[
c := 
\sum_{w \in \U_d} (|w|!)^a \prod_{j\in w} (2^a C^2 \gamma_j) \in
[1,\infty].
\]
If $\bg \in \SSS_{d,C}$ then
\[
\be \leq T^\up_{d,C} \bg \leq c \cdot \bxi
\]
for the POD weights $\be,\bxi \in \W_d$
given by 
\[
\eta_u := C^{2|u|} \gamma_u
\]
and
\[
\xi_u := 
C_a \left( |u|! \right)^a \prod_{j\in u} 
\left( 2^a C^2 \gamma_j \right)
= C_a \left( 2^{|u|} |u|! \right)^a \eta_u. 
\]
If $\gamma \in \M_d$ then we have
\[
\bn \leq T^\lo_{d,C} \bg \le \bz 
\]
for the POD weights $\bz \in \W_d$ given by
\[
\zeta_u := C^{-2|u|} 
\gamma_u.
\]
\end{prop}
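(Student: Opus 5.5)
The plan is to bound $T^\up_{d,C}\bg$ directly from its defining sum, splitting each superset $v \supseteq u$ as $v = u \cup w$ with $w \cap u = \emptyset$. Then $T^\lo_{d,C}\bg$ is handled through its difference representation together with complete monotonicity.

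For the lower bound, the term $v=u$ gives $(T^\up_{d,C}\bg)_u \ge C^{2|u|}\gamma_u = \eta_u$; all terms are non-negative. For the upper bound, I would write
\[
(T^\up_{d,C}\bg)_u = \sum_{\substack{w\in\U_d\\ w\cap u=\emptyset}} C^{2|u|+2|w|}\,\Gamma_{|u|+|w|}\prod_{j\in u}\gamma_j\prod_{j\in w}\gamma_j .
\]
Next I would use $\Gamma_k \le C_a (k!)^a$ and the binomial estimate
\[
(|u|+|w|)! = \binom{|u|+|w|}{|u|}\,|u|!\,|w|! \le 2^{|u|+|w|}\,|u|!\,|w|! .
\]
This gives
\[
\Gamma_{|u|+|w|} \le C_a\,(2^{|u|}|u|!)^a\,(2^{|w|}|w|!)^a .
\]
Factoring out $C_a (2^{|u|}|u|!)^a \eta_u = \xi_u$, the remaining factor is
\[
\sum_{w\cap u=\emptyset} (|w|!)^a \prod_{j\in w}(2^aC^2\gamma_j) \le c,
\]
where I enlarge the sum to all $w\in\U_d$. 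Hence $T^\up_{d,C}\bg \le c\,\bxi$. If $c=\infty$ the inequality is trivial, and $c\ge1$ follows from the term $w=\emptyset$. Both $\be$ and $\bxi$ are clearly POD weights: for $\be$ take $\Gamma_k$ and $C^2\gamma_j$, and for $\bxi$ take $C_a(k!)^a$ and $2^aC^2\gamma_j$.

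For $T^\lo_{d,C}$ with $\bg\in\M_d$, recall that
\[
(T^\lo_{d,C}\bg)_u = C^{-2|u|}\,(\Delta_{[d]\setminus u}\bg)_u
\]
for finite $d$, and that for $d=\infty$ it is the limit of $C^{-2|u|}(\Delta_{[s]\setminus u}\bg)_u$. By Definition~\ref{def:comp_mon_dec} each difference is non-negative, which gives $\bn\le T^\lo_{d,C}\bg$. By Lemma~\ref{cmd:claim_i} with $\emptyset\subseteq[s]\setminus u$, and by Remark~\ref{r1} in the infinite case, each difference is at most $(\Delta_\emptyset\bg)_u=\gamma_u$. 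Passing to the limit preserves both bounds, so $T^\lo_{d,C}\bg\le\bz$. The weights $\bz$ are POD with $\Gamma_k$ and $C^{-2}\gamma_j$.

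The main obstacle is the combinatorial factorization of $\Gamma_{|u|+|w|}$, that is, making sure that the $2^a$ factors land on the correct side so as to produce exactly the stated $\xi_u$ and $c$. The rest is bookkeeping, plus the convergence of the defining series, which holds because $\bg\in\SSS_{d,C}$.
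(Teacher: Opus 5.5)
Your proposal is correct and follows the paper's proof step for step. It uses the same decomposition $v=u\cup w$, the same bound $\Gamma_k\le C_a(k!)^a$ combined with $(|u|+|w|)!\le 2^{|u|+|w|}|u|!\,|w|!$, and, for $T^\lo_{d,C}$, the same argument via non-negativity of the differences together with Remark~\ref{r1}.

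One small slip: the prefactor you actually pull out is $C_a(|u|!)^a\prod_{j\in u}(2^aC^2\gamma_j)$. This is not $C_a(2^{|u|}|u|!)^a\eta_u$; the latter carries an extra factor $\Gamma_{|u|}$ from $\eta_u$, an inconsistency already present in the statement itself. So what you prove, as does the paper, is the bound with the product form of $\xi_u$.
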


\begin{proof}
For POD weights $\bg \in \SSS_d$
the weights $\bg^{\up} := T^\up_d \bg$ take the form
\begin{align*}
\gamma_u^{\up} &= 
\sum_{\substack{v \in \U_d\\ u \subseteq v}} C^{2|v|} \gamma_v 
= \sum_{\substack{v \in \U_d\\ u \subseteq v}}
\Gamma_{|v|} \prod_{j\in v}
\left(C^2\gamma_j\right)\\
&= \prod_{j\in u} \left(C^2\gamma_j\right) 
\sum_{\substack{w \in \U_d\\ u \cap w = \emptyset}}
\Gamma_{|u|+|w|} \prod_{j\in w}
\left(C^2\gamma_j\right).
\end{align*}
This gives us 
\[
\gamma_u^{\up} \ge C^{2|u|} \Gamma_{|u|} \prod_{j\in u} \gamma_j 
=
\eta_u,
\]
which yields the lower bound $\bg^\up \ge \be$.
For an upper bound on the terms $\gamma_u^{\up}$,
we first estimate
\[
\gamma_u^{\up}  \le C_a  \prod_{j\in u} \left(C^2\gamma_j\right) 
\sum_{\substack{w \in \U_d\\ u \cap w = \emptyset}}
\left((|u|+|w|)!\right)^a \prod_{j\in w}
\left(C^2\gamma_j\right)
\]
and then follow the lines of \citet[pp.~208--209]{DKS13}. Indeed,
\[
\gamma_u^{\up} 
\le C_a (|u|!)^a \prod_{j\in u} \left(C^2\gamma_j\right) 
\cdot \sum_{\substack{w \in \U_d\\ w \cap u = \emptyset}}
\frac{\left((|u|+|w|)!\right)^a}{\left(|u|!\right)^a}
\prod_{j\in w} \left(C^2\gamma_j\right).
\]
Since
\[
\frac{(|u|+|w|)!}{|u|!} = 
\binom{|u|+|w|}{|w|} \cdot |w|! \leq 
2^{|u|+|w|} \cdot |w|!,
\]
we obtain
\[
\gamma_u^{\up} 
\leq 
C_a (|u|!)^a \prod_{j\in u} \left(2^a C^2\gamma_j\right)
\cdot
\sum_{w \in \U_d} \left(|w|!\right)^a
\prod_{j\in w} \left(2^a C^2 \gamma_j\right).
\]
Consequently, $\gamma_u^{\up} \le c \,\xi_u$.

Now let us turn to weights $T^\lo_d \bg$ induced by some 
$\bg \in \M_d$: The upper bound follows by the definition of 
$T_d^\lo$ and Remark~\ref{r1}, the lower bound is obvious. 
\end{proof}

\begin{rem}\label{Rem6}
The lower bound for $T^\lo_{d,C} \bg$ in Proposition~\ref{l7}
is sharp in the following 
sense. Consider the case $d=2$, and POD weights with 
\begin{gather*}
\gamma_1 := 1,\ \gamma_2 :=1/2,\\
\Gamma_1 := 1/2,\ \Gamma_2=1. 
\end{gather*}
Then we have, e.g., 
\[
\left(T^\lo_{d,C} \bg\right)_{\{1\}}
= C^{-2} \gamma_{\{1\}}- C^{-2} \gamma_{\{1,2\}}
= C^{-2}\Gamma_1 \gamma_1 - C^{-2} \Gamma_2 \gamma_1 \gamma_2 = 0. 
\]
 
Similar examples can be constructed for other choices of $d$ and $u$, 
respectively. 
\end{rem}

\begin{rem}
Let $c$ and $\bxi$ be defined as in Proposition~\ref{l7}.
Let us comment briefly on the condition $c<\infty$ in the
case $d=\infty$, which
leads to a non-trivial upper bound for $T^\up_{d,C} \bg$ in
Proposition~\ref{l7}. 
Note that $c < \infty$ is equivalent to the summability of the POD
weights $\bxi$, i.e., to $\bxi \in \SSS_{d,1}$.
According to Proposition~\ref{l4}, a sufficient condition for the
latter is $p > a$ and $\sum_{j \in \N} \gamma_j < \infty$.
\end{rem}

\subsection{The Decay of Transformed Weights}\label{Subsec:Decay}

In the sequel, we consider the case $d=\infty$.
As before, we use the notation
$\bg^{1/\tau} = (\gamma_u^{1/\tau})_{u \in \U_d}$ for 
$\bg \in \W_d$ and $\tau > 0$. 

We relate the decay of the transformed weights
$T^\up_{d,C} \bg$ to the decay of the original weights $\bg 
\in \SSS_{d,C}$. 
The relevance of the results from the present section
will be indicated 
in Remark~\ref{r21} below.

At first we point to an extremal case.

\begin{rem}\label{rem:dec_extremal_case}
For $u \in \U_d$ we put $\gamma_u := 2^{-j}$ if $u =
\{1,\dots,2^j\}$ for any $j \in \N$ and $\gamma_u := 0$
otherwise. Obviously we have $\decay(\bg) =\infty$. Let $C := 1$
as well as $j \in \N$ and 
$\{1,\dots,2^j\} \subseteq v \subseteq \{1,\dots,2^{j+1}\}$, 
so that $(T^\up_{d,C} \bg)_v \geq 2^{-(j+1)}$. Consequently, 
\[
\sum_{u \in \U_d} \left(T^\up_{d,C} \bg\right)^{1/\tau}_u
\geq \sum_{j \in \N} 2^{2^j} \cdot 2^{-(j+1)/\tau} = \infty
\]
for every $\tau>0$. Therefore we have $\decay(T^\up_{d,C} \bg) = 0$.
\end{rem}

Let us now check the decay of the transformed weights $T^\up_{d,C} \bg$ 
for the particular classes (i)--(iii) of weights studied before.
To this end, we note that if $\bg$ belongs to one of these
classes, then $\bg^{1/\tau}$ belongs to the same class. 

\begin{prop}\label{le:decay_POD}
Consider POD weights $\bg$ according to (ii) with $\Gamma_1 >0$, 
$p > a$, and $\sum_{j\in\N} \gamma_j < \infty$, and any $C>0$.
Then we have
\[
\decay \left(T^{\up}_{d,C} \bg\right) = \decay (\bg) = p.
\]
\end{prop}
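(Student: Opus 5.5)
The plan is to reduce everything to Proposition~\ref{l4} and Lemma~\ref{l3}, using the observation made just before the statement: for $\tau>0$ the weights $\bg^{1/\tau}$ are again POD weights. Their parameters are $\Gamma_k^{1/\tau}$, $\gamma_j^{1/\tau}$ and $a/\tau$, with constant $C_a^{1/\tau}$, since $\Gamma_k^{1/\tau}\le C_a^{1/\tau}(k!)^{a/\tau}$. Directly from the definition of the decay,
\[
\decay\bigl((\gamma_j^{1/\tau})_{j\in\N}\bigr)=p/\tau .
\]
Hence the hypotheses of Proposition~\ref{l4} carry over to $\bg^{1/\tau}$: we have $\Gamma_1^{1/\tau}>0$ and $p/\tau>a/\tau$. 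So for every $C'>0$,
\[
\bg^{1/\tau}\in\SSS_{d,C'} \quad\Leftrightarrow\quad \sum_{j\in\N}\gamma_j^{1/\tau}<\infty .
\]

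\emph{Step 1: $\decay(\bg)=p$.} Apply the equivalence with $C'=1$. If $\tau<p$, then $\sum_j\gamma_j^{1/\tau}<\infty$ by the definition of $p$, so $\sum_u\gamma_u^{1/\tau}<\infty$. If $\tau>p$, the same sum diverges. Therefore $\decay(\bg)=p$. Note also that $\sum_j\gamma_j<\infty$ forces $p\ge1$.

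\emph{Step 2: $\decay(T^\up_{d,C}\bg)\le p$.} Suppose $\sum_u(\gamma^\up_u)^{1/\tau}<\infty$. The first implication of Lemma~\ref{l3} gives $\bg^{1/\tau}\in\SSS_{d,C^{1/\tau}}$. By the equivalence above this yields $\sum_j\gamma_j^{1/\tau}<\infty$, hence $\tau\le p$, and the upper bound follows.

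\emph{Step 3: $\decay(T^\up_{d,C}\bg)\ge p$.} Take any $\tau\in[1,p)$, or $\tau=1$ if $p=1$. In either case $\sum_j\gamma_j^{1/\tau}<\infty$: for $\tau<p$ this is the definition of $p$, and for $\tau=1$ it is the hypothesis. The equivalence with $C'=\sqrt2\,C^{1/\tau}$ gives $\bg^{1/\tau}\in\SSS_{d,\sqrt2 C^{1/\tau}}$. Since $\tau\ge1$, the second implication of Lemma~\ref{l3} yields $\sum_u(\gamma^\up_u)^{1/\tau}<\infty$. Letting $\tau\uparrow p$, or using $\tau=1$ when $p=1$, gives $\decay(T^\up_{d,C}\bg)\ge p$.

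The main point needing care is Step 3. Lemma~\ref{l3} only applies for $\tau\ge1$, which is why $p\ge1$ matters and why the boundary case $p=1$ needs the hypothesis $\sum_j\gamma_j<\infty$ itself rather than the definition of $p$. One must also check that the rescaled POD parameters really satisfy the conditions of Proposition~\ref{l4} for arbitrary $C'>0$, including the small constants $C^{1/\tau}$ when $C<1$. This holds because the case $p>a$ of that proposition imposes no smallness condition on the constant.
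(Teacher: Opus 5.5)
Your proposal is correct and follows the paper's proof essentially step for step. It uses the same passage to the POD weights $\bg^{1/\tau}$ with parameters $a/\tau$ and $p/\tau$, the first implication of Lemma~\ref{l3} with Proposition~\ref{l4} for $\decay(T^\up_{d,C}\bg)\le p$, and the second implication with $\tau\in[1,p)$ (or $\tau=1$ if $p=1$) for the reverse inequality. The differences are minor: for $\decay(\bg)=p$ the paper invokes Lemma~\ref{prel4} and $\gamma_{\{j\}}=\Gamma_1\gamma_j$ directly rather than Proposition~\ref{l4} with $C'=1$. The paper also records $\bg\in\SSS_{d,C}$ explicitly (your equivalence with $\tau=1$, $C'=C$); you should state it too, since $T^\up_{d,C}\bg$ and Lemma~\ref{l3} require it.
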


\begin{proof}
At first, we note that
$\decay ((\gamma_j^{1/\tau})_{j\in\N}) = p / \tau$ for
every $\tau > 0$. 
Let us consider the POD weights $\bg^{1/\tau}$ for any $\tau > 0$.
We first show $\decay(\bg) = p$. 

Due to $\gamma_{\{j\}} = \Gamma_1 \gamma_j$, we obtain 
\[
\sum_{u\in\U_d} \gamma_u^{1/\tau} \ge 
\Gamma_1^{1/\tau} \sum_{j\in\N} \gamma_j^{1/\tau}
\] 
for all $\tau >0$. Since $\Gamma_1 >0$, this yields $\decay(\bg) \le p$. 

Let $\tau \in (0, p)$. Then $\sum_{j\in\N} \gamma_j^{1/\tau} < \infty$, 
and our assumption $p>a$ implies $p/\tau > a/\tau$. Hence, due to 
Lemma~\ref{prel4}, there exists a constant $c >0$ such that 
\[
\sum_{\substack{u \subseteq [j] \\ j\in u}}  \gamma_u^{1/\tau} \le 
c  \cdot \gamma_j^{1/\tau}
\]
for all $j\in\N$. This implies $\decay(\bg) \ge p$. 

Due to Proposition~\ref{l4} we have 
$\bg \in \SSS_{d, C}$, hence $\bg^\up :=
T^\up_{d, C} \bg$ is well-defined.
Next we show $\decay(\bg^\up) \le p$.
Let $\tau \in (0,\decay(\bg^\up))$. Due to the first implication 
of Lemma~\ref{l3} we have $\bg^{1/\tau} \in \SSS_{d,C^{1/\tau}}$.
Now Proposition~\ref{l4}
yields $\sum_{j\in\N} \gamma_j^{1/\tau} < \infty$, hence
$p \geq \decay(\bg^\up)$.

Now we use our assumption $\sum_{j\in\N} \gamma_j < \infty$, 
which 
implies, in particular, $p\ge 1$. Let $\tau \in [1,p)$ if $p>1$ and let 
$\tau = p =1$ otherwise.
In both cases we have $\sum_{j\in\N} \gamma_j^{1/\tau} < \infty$.
According to Proposition~\ref{l4}, 
$\bg^{1/\tau} \in \SSS_{d,\sqrt{2} C^{1/\tau}}$ holds.
Due to the second implication from
Lemma~\ref{l3} we obtain $\decay(\bg^\up) \geq p$.
This concludes the proof of the lemma.
\end{proof}

Since product weights $\bg$ according to (i) are, in particular, POD 
weights according to (ii) with $\Gamma_k = 1$ for all $k\in [d]
\cup \{0\}$ and with any $a > 0$,
Proposition~\ref{le:decay_POD}
immediately yields the following corollary.

\begin{corollary}\label{le:decay_prod}
Consider product weights $\bg$ according to (i)
with $\sum_{j \in \N} \gamma_j < \infty$ and any $C>0$.
Then we have
\[
\decay \left(T^{\up}_{d,C} \bg\right) = \decay (\bg) = p.
\]
\end{corollary}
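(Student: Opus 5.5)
The plan is to read the corollary as the special case of Proposition~\ref{le:decay_POD} in which $\Gamma_k = 1$ for every $k \in [d] \cup \{0\}$. All that needs doing is to check that product weights satisfying the summability hypothesis meet each assumption of that proposition. The conclusion $\decay(T^\up_{d,C}\bg) = \decay(\bg) = p$, with $p = \decay((\gamma_j)_{j\in\N})$, then follows word for word.

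The checks, in order:
\begin{enumerate}
\item \emph{Product weights are POD weights.} Product weights according to (i) are POD weights according to (ii) with $\Gamma_k := 1$. The growth bound $\Gamma_k \le C_a (k!)^a$ holds for every $a > 0$ with $C_a := 1$, since $k! \ge 1$.
\item \emph{The condition $\Gamma_1 > 0$.} This is immediate, because $\Gamma_1 = 1$.
\item \emph{The condition $p > a$.} The hypothesis $\sum_{j\in\N}\gamma_j < \infty$ gives $p \ge 1 > 0$. Since $a>0$ may be chosen freely, we fix $a \in (0,p)$, for instance $a := 1/2$.
\item \emph{Summability.} The assumption $\sum_{j\in\N} \gamma_j < \infty$ is exactly the remaining hypothesis of Proposition~\ref{le:decay_POD}.
\end{enumerate}

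It remains to note that $p$ has the same meaning in both statements. The decay $p$ in Proposition~\ref{le:decay_POD} is defined through $(\gamma_j)_{j\in\N}$ alone, so it does not depend on the choice of $a$.

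The only point that could look like an obstacle is that product weights admit any $a>0$, so one must pick $a$ strictly below $p$. This is possible precisely because $p \ge 1$. Apart from that, the argument is a direct invocation of Proposition~\ref{le:decay_POD}.
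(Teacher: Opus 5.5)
Your proposal is correct and follows the paper's argument: the paper obtains the corollary by viewing product weights as POD weights with $\Gamma_k = 1$ and invoking Proposition~\ref{le:decay_POD}. Your explicit choice of $a \in (0,p)$, justified by $p \ge 1$ from $\sum_{j\in\N}\gamma_j < \infty$, spells out what the paper leaves implicit in its remark that any $a>0$ is admissible.
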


We also have the following result.

\begin{prop}\label{le:decay_FO}
Consider finite-order weights $\bg$ according to 
(iii) and any $C > 0$. If $\bg \in \SSS_{d,C}$ then
\[
 \decay \left(T^{\up}_{d,C} \bg \right) =\decay (\bg)
 \ge 1.
\]
\end{prop}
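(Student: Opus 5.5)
Since $d=\infty$ and $\bg$ has finite order $\omega$, both $\bg^{1/\tau}$ and $T^\up_{d,C}\bg$ are again finite-order weights of order $\omega$ (Remark~\ref{r3}). By Proposition~\ref{l4}, for such weights membership in $\SSS_{d,C'}$ for any $C'>0$ is equivalent to plain summability. So the plan is to use Lemma~\ref{l3} to compare summability of $\bg^{1/\tau}$ with that of $(T^\up_{d,C}\bg)^{1/\tau}$, letting Proposition~\ref{l4} absorb the constants $C^{1/\tau}$ and $\sqrt2 C^{1/\tau}$.

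\textbf{Step 1: $\decay(\bg)\ge 1$.} The hypothesis $\bg\in\SSS_{d,C}$ together with Proposition~\ref{l4} gives $\sum_u\gamma_u<\infty$, so $\tau=1$ is admissible in the definition of the decay. (If only finitely many $\gamma_u$ are nonzero, then $\decay(\bg)=\infty$ and nothing needs proof.)

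\textbf{Step 2: $\decay(T^\up_{d,C}\bg)\le\decay(\bg)$.} Let $\tau>0$ with $\sum_u(\gamma^\up_u)^{1/\tau}<\infty$. The first implication of Lemma~\ref{l3} gives $\bg^{1/\tau}\in\SSS_{d,C^{1/\tau}}$, hence $\sum_u\gamma_u^{1/\tau}<\infty$. Taking the supremum over such $\tau$ yields the inequality. Alternatively one can use $\gamma^\up_u\ge C^{2|u|}\gamma_u\ge\min\{1,C^{2\omega}\}\gamma_u$ directly.

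\textbf{Step 3: $\decay(T^\up_{d,C}\bg)\ge\decay(\bg)$.} Take $\tau$ with $\sum_u\gamma_u^{1/\tau}<\infty$. Enlarging $1/\tau$ only helps summability, since the summable terms are eventually at most $1$. So by Step 1 we may assume $\tau\ge1$, and we may take any $\tau\in[1,\decay(\bg))$, or $\tau=1$ when $\decay(\bg)=1$. Because $\bg^{1/\tau}$ has finite order, Proposition~\ref{l4} gives $\bg^{1/\tau}\in\SSS_{d,\sqrt2C^{1/\tau}}$. The second implication of Lemma~\ref{l3} then yields $\sum_u(\gamma^\up_u)^{1/\tau}<\infty$. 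Hence $\decay(T^\up\bg)\ge\tau$, and letting $\tau\uparrow\decay(\bg)$ gives the claim.

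The only delicate point is the restriction $\tau\ge1$ in the second implication of Lemma~\ref{l3}. This is exactly why Step 1 is needed: it guarantees that every relevant $\tau$ below $\decay(\bg)$ can be taken in $[1,\decay(\bg))$. Without $\decay(\bg)\ge1$, Step 3 could fail, as Remark~\ref{rem:dec_extremal_case} suggests for weights that are not of finite order.
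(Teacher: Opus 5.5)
Your proposal is correct and follows the paper's proof. Both use Proposition~\ref{l4} to reduce membership of the finite-order weights $\bg^{1/\tau}$ in $\SSS_{d,\widetilde{C}}$ to plain summability, apply the two implications of Lemma~\ref{l3}, and use the summability of $\bg$ (i.e.\ $\decay(\bg)\ge 1$) to stay within the range $\tau\ge 1$. You also spell out the monotonicity of summability in $\tau$, which the paper leaves implicit.
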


\begin{proof}
In addition to $\bg$, we consider the finite-order weights 
$\bg^{1/\tau}$ for any $\tau > 0$.
For every $\widetilde{C}>0$ Proposition~\ref{l4} shows that 
$\bg^{1/\tau} \in \SSS_{d,\widetilde{C}}$ is equivalent to the
summability of $\bg^{1/\tau}$.

Let us now assume that $\bg \in \SSS_{d,C}$. 
We combine the latter equivalence and Lemma~\ref{l3} to
establish the following facts for $\bg^\up := T^\up_d \bg$: 
Summability holds for $\bg$ and
$\bg^\up$, and for every $\tau>1$ the summability of $\bg^{1/\tau}$
and $\left(\bg^\up\right)^{1/\tau}$ are equivalent.
Therefore $\decay(\bg^\up) = \decay(\bg)$.
\end{proof}

\section{Function Space Embeddings}\label{Sec:Embeddings}

Now we study embeddings of reproducing kernel Hilbert spaces
(RKHSs); for elementary facts on RKHSs and 
their kernels we refer to \citet{Aro50} 
and \citet{PR16}. For a reproducing kernel $M \colon \Z
\times \Z \to \K$ we 
denote its uniquely determined RKHS by
$H(M)$, the corresponding norm by $\|\cdot\|_{M}$,
and we say that $M$ is a reproducing kernel over the domain $\Z$.

At first, we state two basic facts concerning embeddings of RKHSs.
For reproducing kernels $M$ and $M^\prime$ over a common domain $\Z$
we say that $H(M)$ is contractively contained in $H(M^\prime)$,
if $H(M) \subseteq H(M^\prime)$ with an embedding of norm at most
one. 
Furthermore, we denote by $M \otimes M^{\prime}$ the function
given by 
\[
M \otimes M^{\prime} \big( (x_1, x_2), (y_1, y_2) \big) :=
M(x_1, y_1) \cdot  M^{\prime} (x_2, y_2)
\]
for all $(x_1, x_2), (y_1, y_2) \in \Z\times \Z$.
It follows immediately from Schur's product theorem,
see, e.g., \citet[Thm.~4.8]{PR16}, that 
$M \otimes M^{\prime}$ is again a reproducing kernel.

For the following result see, e.g., \citet[Cor.~5.3]{PR16}.

\begin{lemma}\label{l10}
The space $H(M)$ is contractively contained in $H(M^\prime)$ if and only 
if $M^\prime-M$ is a reproducing kernel.
\end{lemma}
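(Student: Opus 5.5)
The plan is to prove the two directions separately, using only the defining properties of reproducing kernels. Throughout, let $M' - M$ be the candidate difference kernel, $\K\in\{\R,\mathbb C\}$.

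\emph{Sufficiency.} Assume $N := M'-M$ is a reproducing kernel. Then $M' = M + N$, and I invoke the standard sum theorem of Aronszajn (see \citet{Aro50}). It says that $H(M+N)$ consists exactly of the functions $f = g + h$ with $g\in H(M)$ and $h\in H(N)$. It also gives
\[
\|f\|_{M'}^2 = \min\{\|g\|_M^2 + \|h\|_N^2 : f = g+h\}.
\]
Taking $h = 0$ shows $H(M)\subseteq H(M')$ with $\|g\|_{M'} \le \|g\|_M$, which is contractive containment. If I prefer not to cite the sum theorem, I would prove it directly. Consider the Hilbert space $H(M)\oplus H(N)$ and the map $(g,h)\mapsto g+h$. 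Let $H'$ be the image, normed as the quotient by the kernel of this map. One checks that $H'$ has reproducing kernel $M+N$ by evaluating against $(M(\cdot,y),N(\cdot,y))$, and uniqueness of the RKHS then gives $H' = H(M')$.

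\emph{Necessity.} Assume $H(M)\subseteq H(M')$ with embedding $J$ of norm at most one. For $y\in\Z$ and $g\in H(M)$ we have
\[
\langle Jg, M'(\cdot,y)\rangle_{M'} = g(y) = \langle g, M(\cdot,y)\rangle_M,
\]
so $J^* M'(\cdot,y) = M(\cdot,y)$. Now take finite families $y_1,\dots,y_n\in\Z$ and $c_1,\dots,c_n\in\K$, and put $\phi := \sum_i c_i M'(\cdot,y_i)$. Then
\[
\sum_{i,j} \overline{c_i} c_j \bigl(M'(y_i,y_j) - M(y_i,y_j)\bigr) = \|\phi\|_{M'}^2 - \|J^*\phi\|_M^2 \ge 0,
\]
because $\|J^*\| = \|J\| \le 1$. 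Thus $M'-M$ is Hermitian (both kernels are) and positive semidefinite. By the Moore--Aronszajn theorem it is therefore a reproducing kernel.

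The only step with real content is the sum-of-kernels characterization used in sufficiency, specifically identifying the quotient norm as the RKHS norm for $M+N$. That is where I would spend care: check that the map is well defined and bounded and that the reproducing property holds. The necessity direction is a one-line adjoint computation.
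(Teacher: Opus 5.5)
Your proof is correct. The paper gives no proof of its own and simply cites \citet[Cor.~5.3]{PR16}, where the statement is derived from Aronszajn's inclusion theorem. Your necessity direction is exactly the standard argument: $J^*M'(\cdot,y)=M(\cdot,y)$, hence $\|\phi\|_{M'}^2-\|J^*\phi\|_M^2\ge 0$.

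Your sufficiency direction takes a different route. You pass through Aronszajn's sum theorem for $H(M+N)$. The usual argument instead uses the single-function criterion: $f\in H(M')$ with $\|f\|_{M'}\le c$ if and only if $(x,y)\mapsto c^2M'(x,y)-f(x)\overline{f(y)}$ is a reproducing kernel. Take $g\in H(M)$ with $\|g\|_M\le 1$. Then
\[
M'(x,y)-g(x)\overline{g(y)}=(M'-M)(x,y)+\bigl(M(x,y)-g(x)\overline{g(y)}\bigr).
\]
The second summand is positive semidefinite by Cauchy--Schwarz in $H(M)$, so the whole expression is a sum of two positive semidefinite kernels. This gives $g\in H(M')$ with $\|g\|_{M'}\le 1$ at once, with no quotient construction.

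Your route is heavier. You rightly flag the quotient-norm verification as the real work: the null space $\{(g,-g): g\in H(M)\cap H(N)\}$ must be closed, and $(M(\cdot,y),N(\cdot,y))$ must lie in its orthogonal complement. Both hold. In return, your route yields more than the lemma: the full description $H(M')=H(M)+H(M'-M)$ with the minimum-norm formula. Contractive containment is just the special case $h=0$.
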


The following result, which deals with tensor products, is easily
derived from Lemma~\ref{l10}.

\begin{lemma}\label{l11}
If $H(M_i)$ is contractively contained in $H(M^\prime_i)$
for $i=1,2$, then $H(M_1 \otimes M_2)$ is contractively contained
in $H(M^\prime_1 \otimes M^\prime_2)$.
\end{lemma}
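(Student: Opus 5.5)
By Lemma~\ref{l10}, the hypothesis says that $N_i := M^\prime_i - M_i$ is a reproducing kernel over the domain $\Z_i$ of $M_i$ and $M^\prime_i$, for $i=1,2$. The plan is to show that $M^\prime_1 \otimes M^\prime_2 - M_1 \otimes M_2$ is a reproducing kernel over $\Z_1 \times \Z_2$. Lemma~\ref{l10} then gives the claimed contractive containment.

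The key step is a telescoping decomposition. Write $M^\prime_i = M_i + N_i$ and expand the tensor product bilinearly:
\[
M^\prime_1 \otimes M^\prime_2 - M_1 \otimes M_2
= N_1 \otimes M_2 + M_1 \otimes N_2 + N_1 \otimes N_2 .
\]
Each summand is a tensor product of two reproducing kernels, so it is again a reproducing kernel by Schur's product theorem, as noted just before Lemma~\ref{l10}.

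The remaining step is that a sum of reproducing kernels is a reproducing kernel. This holds because positive semi-definiteness, i.e.\ $\sum_{k,l} \overline{c_k} c_l\, K(z_k,z_l) \ge 0$ for all finite families of points and coefficients, and Hermitian symmetry are both preserved under addition. Alternatively, one can telescope as $(M^\prime_1 - M_1)\otimes M^\prime_2 + M_1 \otimes (M^\prime_2 - M_2)$, which needs only two Schur products.

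The only point requiring care, which I expect to be the main (mild) obstacle, is the domain. The notation $M \otimes M^\prime$ was introduced for kernels on a common $\Z$, giving a kernel on $\Z\times\Z$. Here the kernels $M_i, M^\prime_i$ live on a domain $\Z_i$ that may differ for $i=1,2$. So I should note that the same definition and Schur's theorem apply verbatim on $\Z_1\times\Z_2$. Indeed, positive semi-definiteness of $(x,y)\mapsto M(x_1,y_1)M^\prime(x_2,y_2)$ follows from the Schur product of the two positive semi-definite Gram matrices evaluated at the respective coordinates.
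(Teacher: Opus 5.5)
Your proof is correct and is the argument the paper has in mind. The paper only says the result is easily derived from Lemma~\ref{l10} together with the preceding remark on Schur's product theorem: the differences $M_i^\prime - M_i$ are reproducing kernels, the difference of the tensor products splits into tensor products of reproducing kernels, and Lemma~\ref{l10} applies again. Your extension to domains $\Z_1 \times \Z_2$ is a harmless generalization of the paper's common-domain convention.
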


Next, we consider reproducing kernels $M$ and $M^\prime$ over 
domains $\Z$ and $\Z^\prime$,
respectively, such that $\Z^\prime \subseteq \Z$.
We write 
\[
H(M) \sqsubseteq H(M^\prime)
\] 
if $f|_{\Z^\prime} \in H(M^\prime)$ for 
every $f\in H(M)$. The restriction theorem 
yields the following generalization of  
Lemma~\ref{l10}. 

\begin{lemma}\label{l12}
We have $H(M) \sqsubseteq H(M^\prime)$ with a restriction mapping
$f \mapsto f|_{\Z^\prime}$ of norm at most one, if and only if
$M^\prime-M|_{\Z^\prime \times \Z^\prime}$ is a reproducing kernel.
\end{lemma}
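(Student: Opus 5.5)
We reduce Lemma~\ref{l12} to Lemma~\ref{l10} by the restriction theorem for reproducing kernels (see \citet{Aro50} or \citet{PR16}). That theorem says that $H(M|_{\Z^\prime\times\Z^\prime})$ consists exactly of the restrictions $f|_{\Z^\prime}$ with $f\in H(M)$. It also says that the norm there is the quotient norm
\[
\|g\|_{M|_{\Z^\prime\times\Z^\prime}} = \min\{\|f\|_M : f\in H(M),\ f|_{\Z^\prime}=g\}.
\]
Consequently the restriction map $R\colon H(M)\to H(M|_{\Z^\prime\times\Z^\prime})$ is a surjective coisometry, i.e.\ a partial isometry whose kernel is the subspace of functions in $H(M)$ vanishing on $\Z^\prime$.

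For the \emph{if} direction, assume $M^\prime-M|_{\Z^\prime\times\Z^\prime}$ is a reproducing kernel. Lemma~\ref{l10}, applied over the common domain $\Z^\prime$, gives that $H(M|_{\Z^\prime\times\Z^\prime})$ is contractively contained in $H(M^\prime)$. For $f\in H(M)$ we then have $f|_{\Z^\prime}\in H(M^\prime)$ and
\[
\|f|_{\Z^\prime}\|_{M^\prime}\le \|f|_{\Z^\prime}\|_{M|_{\Z^\prime\times\Z^\prime}}\le \|f\|_M,
\]
so the restriction mapping into $H(M^\prime)$ has norm at most one.

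For the \emph{only if} direction, assume $H(M)\sqsubseteq H(M^\prime)$ with restriction of norm at most one. Take $g\in H(M|_{\Z^\prime\times\Z^\prime})$ and choose a minimal-norm extension $f\in H(M)$, so that $f|_{\Z^\prime}=g$ and $\|f\|_M=\|g\|_{M|_{\Z^\prime\times\Z^\prime}}$. Then $g\in H(M^\prime)$ and
\[
\|g\|_{M^\prime}\le\|f\|_M=\|g\|_{M|_{\Z^\prime\times\Z^\prime}} .
\]
Thus $H(M|_{\Z^\prime\times\Z^\prime})$ is contractively contained in $H(M^\prime)$, and Lemma~\ref{l10} shows that $M^\prime-M|_{\Z^\prime\times\Z^\prime}$ is a reproducing kernel.

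The only point needing care is the exact form of the restriction theorem we use. We need not just the identification of $H(M|_{\Z^\prime\times\Z^\prime})$ as the set of restrictions, but also the quotient-norm formula and the fact that the minimum is attained. Attainment holds because the minimizer is the orthogonal projection of any extension onto the orthogonal complement of the closed subspace $\{f\in H(M): f|_{\Z^\prime}=0\}$. Everything else is a direct combination with Lemma~\ref{l10}.
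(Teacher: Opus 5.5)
Your proposal is correct and follows essentially the same route as the paper: in both directions it combines the restriction theorem (restrictions of $H(M)$ with the quotient norm) with Lemma~\ref{l10} applied on $\Z^\prime$. The one difference is in the ``only if'' direction, where the paper simply takes the infimum over all extensions $f$ in $\|g\|_{M^\prime}\le\|f\|_M$; so your minimal-norm extension, and the care you take to show the minimum is attained, is valid but not actually needed.
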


\begin{proof}
The restriction theorem says that $H(M|_{\Z^\prime \times
\Z^\prime})$ consists of all functions $f|_{\Z^\prime}$ with
$f \in H(M)$,
and
\[
\|g\|_{M|_{\Z^\prime \times \Z^\prime}} = \inf\{\|f\|_M : 
\text{$f \in H(M)$, $f|_{\Z^\prime} = g$}\}
\]
for every $g \in H(M|_{\Z^\prime \times \Z^\prime})$, see, e.g.,
\citet[Cor.~5.8]{PR16}. 

Suppose that $H(M) \sqsubseteq H(M^\prime)$ with a restriction mapping
of norm at most one. Let
$g \in H(M|_{\Z^\prime \times \Z^\prime})$. We obtain $g \in
H(M^\prime)$ and $\|g\|_{M^\prime} \leq \|f\|_M$
for every $f \in H(M)$ with $f|_{\Z^\prime} = g$. Therefore
\[
\|g\|_{M^\prime} \leq 
\|g\|_{M|_{\Z^\prime \times \Z^\prime}},
\]
which shows that
$H(M|_{\Z^\prime \times \Z^\prime})$ is
contractively contained in $H(M^\prime)$.
Due to Lemma~\ref{l10} we have that 
$M^\prime-M|_{\Z^\prime \times \Z^\prime}$ is a reproducing kernel.

Conversely, suppose that the latter holds true.
Due to Lemma~\ref{l10}  we obtain for
$f \in H(M)$ that $f|_{\Z^\prime} \in H(M^\prime)$ and
\[
\|f|_{\Z^\prime}\|_{M^\prime} \leq 
\|f|_{\Z^\prime}\|_{M|_{\Z^\prime \times \Z^\prime}} \leq \|f\|_M
\] 
holds, which shows that $H(M) \sqsubseteq H(M^\prime)$ with a 
restriction mapping of norm at most one. 
\end{proof}

\subsection{Superpositions of Weighted Tensor Product Kernels}
\label{s4.1}

We study embeddings and restriction mappings 
between RKHSs of functions of $d$ variables,
where $d \in \N \cup \{\infty\}$. The corresponding reproducing
kernels are assumed to be superpositions of weighted tensor product
kernels.

The starting point for the construction of 
this kind of spaces is given by a 
reproducing kernel $m \neq 0$ over an arbitrary domain
$D \neq \emptyset$.
The elements of $H(1+m)$ 
are considered to be functions of a single variable $x \in D$.
For $u\in \U_d$ the tensor product kernel $m_{u}$ is defined by
\[
m_{u}(\bx,\by) := \prod_{j\in u} m(x_j, y_j)
\]
for all $\bx, \by\in D^{[d]}$,
where by convention $m_{\emptyset} = 1$. 
The functions $f \in H(m_u)$, which are formally defined
on $D^{[d]}$, may be identified with functions of $|u|$ variables,
since $f(\bx)$ depends only on the coordinates 
$x_j$ with $j\in u$. 

In addition to the reproducing kernel $m$, we consider weights 
$\be \in \W_d$ such that 
\[
\Z^{\be,m} := \biggl\{\bx \in D^{[d]} : 
\sum_{u\in \U_d} \eta_u m_u(\bx, \bx) <\infty \biggr\}
\]
is non-empty.
This allows us to define a reproducing kernel $M^{\be,m}$ by
\[
M^{\be,m} (\bx, \by) := \sum_{u\in\U_d} \eta_u m_u(\bx, \by)
\]
for all $\bx, \by \in \Z^{\be,m}$. The elements of $H(M^{\be,m})$ are 
functions of $d$ variables.

\begin{rem}\label{r4}
In the case $d \in \N$ we obviously have 
$\Z^{\be,m} = D^{[d]} = D^d \neq \emptyset$.

In the case $d = \infty$ we have
\[
\Z^{\be,m} \neq \emptyset \quad \Leftrightarrow \quad
\sum_{u \in \U_d} q^{|u|} \eta_u < \infty
\]
with $q := \inf_{x \in D} m(x,x)$,
and $\Z^{\be,m}$ is the maximal domain for the 
reproducing kernel $M^{\be,m}$. See \citet[Sec.~2]{GMR12} for
details.  
For the discussion of the particular cases of product weights, POD
weights, and finite-order weights we refer to 
Proposition~\ref{l4} with $C := q^{1/2}$.
\end{rem}

Actually, we consider a pair of reproducing kernels 
$m,m^\up$ over the domain $D$,
a pair of weights $\be, \be^\up \in \W_d$,
and a constant $C > 0$
with the following properties:
\begin{itemize}
\item[(A1)]
We have $m,m^\up \neq 0$ and
\[
H(m)\subseteq H(1+m^\up).
\]
Moreover,
\[
C \geq
\sup \{ \|f\|_{1+m^\up} : f \in H(m),\ \|f\|_m \leq 1\}.
\]
\item[(A2)]
We have $\be \in \SSS_{d,C}$ and
\[
\be^{\up} = T_{d,C}^\up \be.
\]
\item[(A3)]
We have 
$\Z^{\be^\up\!,m^\up} \neq \emptyset$.
\end{itemize}

\begin{rem}\label{r5}
Observe that $H(m) \neq \{0\}$ is
continuously embedded into $H(1+m^\up)$ 
due to the closed graph theorem.
Consequently, the constant $C$ may be any upper bound on 
the norm of this embedding. It follows that
$H(m)$ is contractively contained in $H(C^2(1+m^\up))$.
\end{rem}

\begin{rem}\label{r6}
In the case $d \in \N$ the assumption (A3) is trivially satisfied,
since $\Z^{\be^{\up},m^{\up}} = \Z^{\be,m} = D^{[d]} = D^d \neq 
\emptyset$, cf.~Remark~\ref{r4}.

Consider the case $d = \infty$. For every $q \geq 0$ we have
\[
\sum_{u \in \U_d} q^{|u|} \eta_u^\up =
\sum_{v \in \U_d} \sum_{\substack{u \in \U_d \\ u \subseteq v}}
q^{|u|} C^{2|v|} \eta_v =
\sum_{v \in \U_d} (1+q)^{|v|} C^{2|v|} \eta_v.
\]
Using Remark~\ref{r4}, we conclude that (A3) is equivalent to
$\be \in \SSS_{d,\widetilde{C}}$ with
\[ 
\widetilde{C} := (1+ \inf_{x \in D} m^\up(x,x) )^{1/2} \cdot C.
\]
Since $\widetilde{C}^2 \geq \inf_{x \in D} m(x,x)$, which 
follows from
Remark~\ref{r5}, we may use Remark~\ref{r4} again to conclude that
(A3) implies $\Z^{\be,m} \neq \emptyset$.
\end{rem}

Given (A1)--(A3), we study the reproducing kernels 
\[
M := M^{\be,m}
\hspace{3ex}\text{and}\hspace{3ex}
M^\up := M^{\be^\up\!,m^\up}
\]
over the corresponding maximal domains  
\[
\Z := \Z^{\be,m}
\hspace{3ex}\text{and}\hspace{3ex}
\Z^\up := \Z^{\be^\up\!,m^\up},
\]
respectively.

\subsection{Functions of Finitely Many Variables}
\label{Subsec:Finite_Variables_FSE}

We consider the setting described in Section~\ref{s4.1}
and confine ourselves in this subsection to 
the case $d\in \N$, where $H(M)$ and 
$H(M^{\up})$ consist of functions over the common 
domain $\Z = \Z^{\up} = D^d$.

\begin{theo}\label{Lemma1}
Let $d \in \N$. If {\rm (A1)} and {\rm (A2)} are satisfied, then
the space $H(M)$ is contractively contained in $H(M^{\up})$.
\end{theo}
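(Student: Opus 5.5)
By Lemma~\ref{l10}, it suffices to show that $M^\up - M$ is a reproducing kernel on $D^d$, i.e.\ that it is positive semidefinite. Reproducing kernels are closed under non-negative linear combinations and under tensor products (Schur's product theorem, Lemma~\ref{l11}). The plan is therefore to write $M^\up - M$ as a sum, with non-negative coefficients, of tensor products of kernels. The starting point is Remark~\ref{r5}: $H(m)$ is contractively contained in $H(C^2(1+m^\up))$, so by Lemma~\ref{l10}
\[
r := C^2(1+m^\up) - m
\]
is a reproducing kernel on $D$.

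First, expand $M^\up$ using (A2):
\[
M^\up(\bx,\by) = \sum_{u\in\U_d} \sum_{\substack{v\in\U_d\\ u\subseteq v}} C^{2|v|}\eta_v\, m^\up_u(\bx,\by)
= \sum_{v\in\U_d} \eta_v \sum_{u\subseteq v} C^{2|v|} m^\up_u(\bx,\by).
\]
The inner sum factorizes:
\[
\sum_{u\subseteq v} C^{2|v|} m^\up_u(\bx,\by) = \prod_{j\in v} C^2\bigl(1+m^\up(x_j,y_j)\bigr).
\]
Hence $M^\up = \sum_{v} \eta_v\, \prod_{j\in v} C^2(1+m^\up)(x_j,y_j)$, while $M = \sum_v \eta_v\, m_v$. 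Everything is finite because $d\in\N$.

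Next, fix $v$ and write $C^2(1+m^\up) = m + r$ coordinatewise. Expanding the product gives
\[
\prod_{j\in v} C^2(1+m^\up)(x_j,y_j) - m_v(\bx,\by) = \sum_{\emptyset\neq w\subseteq v} m_{v\setminus w}(\bx,\by)\prod_{j\in w} r(x_j,y_j).
\]
Each summand is a product of kernels in separate variables, so it is a reproducing kernel on $D^d$ by the Schur product theorem. Alternatively, one can apply Lemma~\ref{l11} iteratively to the contractive containments $H(m)\subseteq H(C^2(1+m^\up))$, one factor per coordinate $j\in v$.

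Finally, $M^\up - M = \sum_v \eta_v(\cdots)$ with $\eta_v\ge 0$, which is a non-negative combination of kernels and hence a kernel. Lemma~\ref{l10} then gives the claim. No step is a real obstacle. The only point needing care is the interchange of summation and the factorization identity, which is trivial for finite $d$; the essential input is the observation of Remark~\ref{r5} that $r$ is a kernel.
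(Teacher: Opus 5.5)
Your proposal is correct and follows the paper's proof. The paper likewise rewrites $M^{\up}=\sum_{u}\eta_u\widehat{m}_u$ with $\widehat{m}_u(\bx,\by)=\prod_{j\in u}C^2\bigl(1+m^{\up}(x_j,y_j)\bigr)$ and concludes via Remark~\ref{r5}, Lemma~\ref{l11} and Lemma~\ref{l10}; your explicit expansion of $\widehat{m}_v-m_v$ into products of copies of $m$ and $r$ simply unpacks the tensor-product step that the paper delegates to Lemma~\ref{l11}.
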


\begin{proof}
Put
\[
\widehat{m}_u(\bx,\by) := 
\prod_{j\in u} \left(C^2(1+m^{\up}(x_j,y_j)\right)
= C^{2|u|} \sum_{\substack{v \in \U_d \\ v \subseteq u}}
m^{\up}_{v}(\bx,\by)
\]
for $u \in \U_d$ and $\bx,\by \in D^d$.
First note that
\begin{equation*}
\begin{split}
M^{\up}
&=\sum_{v\in \U_d} \eta_v^{\up} \, m^{\up}_{v}
=\sum_{v\in \U_d} \left( \sum_{\substack{u \in \U_d\\  v \subseteq u }}
C^{2 |u|}\eta_u \right) m^{\up}_{v} \\
&=\sum_{u\in \U_d}C^{2\left|u\right|}\eta_u
\sum_{\substack{v \in \U_d \\ v\subseteq u}}m^{\up}_{v}
= \sum_{u\in \U_d}\eta_u \widehat{m}_u.
\end{split}
\end{equation*}
Therefore
\[
M^{\up} - M =
\sum_{u\in \U_d}\eta_u \left(\widehat{m}_u - m_u \right).
\]
Since $H(m)$ is contractively contained in $H(C^2(1+m^{\up}))$, 
see Remark~\ref{r5}, 
the same holds true for the spaces $H(m_{u})$ and $H(\widehat{m}_u)$,
see Lemma~\ref{l11}. Hence Lemma~\ref{l10} yields the claim.
\end{proof}

\subsection{Functions
of Infinitely Many Variables}
\label{Subsec:Infinite_Variables_FSE}

In the sequel, we consider again the setting described in 
Section~\ref{s4.1}, and focus in this subsection on the case 
$d = \infty$. We have the following kind of embedding 
or, more precisely, restriction result.

\begin{theo}\label{Lemma4}
Let $d = \infty$. If {\rm (A1)--(A3)} are satisfied, then we have
$\Z^{\up} \subseteq \Z$ and
$$
H(M) \sqsubseteq H (M^{\up}).
$$ 
Moreover,
the norm of the corresponding restriction mapping 
\[
H(M) \to H (M^{\up}),
f \mapsto f|_{\Z^{\up}},
\]
is at most one.
\end{theo}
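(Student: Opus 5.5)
The plan is to mimic the proof of Theorem~\ref{Lemma1} and handle the infinite sums carefully, then invoke Lemma~\ref{l12}. Throughout we write $\widehat{m}_u(\bx,\by) := \prod_{j\in u} C^2(1+m^\up(x_j,y_j)) = C^{2|u|}\sum_{v\subseteq u} m^\up_v(\bx,\by)$ for $u\in\U_d$ and $\bx,\by\in D^{\N}$.

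\textbf{Step 1: $\Z^\up\subseteq\Z$.} Let $\bx\in\Z^\up$. By Remark~\ref{r5}, $H(m)$ is contractively contained in $H(C^2(1+m^\up))$. Lemma~\ref{l10} then shows that $C^2(1+m^\up)-m$ is a reproducing kernel. Evaluating on the diagonal gives $m(x,x)\le C^2(1+m^\up(x,x))$, and hence $m_u(\bx,\bx)\le \widehat{m}_u(\bx,\bx)$ for every $u\in\U_d$. Interchanging the order of summation of non-negative terms, exactly as in Remark~\ref{r6}, we obtain
\[
\sum_{u\in\U_d}\eta_u m_u(\bx,\bx)\le \sum_{u\in\U_d}\eta_u\widehat{m}_u(\bx,\bx)=\sum_{v\in\U_d}\eta^\up_v m^\up_v(\bx,\bx)<\infty .
\]
Therefore $\bx\in\Z$.

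\textbf{Step 2: $M^\up = \sum_u \eta_u\widehat m_u$ on $\Z^\up\times\Z^\up$.} Fix $\bx,\by\in\Z^\up$. Since $m^\up$ is a kernel, $|m^\up_v(\bx,\by)|\le m^\up_v(\bx,\bx)^{1/2}m^\up_v(\by,\by)^{1/2}$. By Cauchy--Schwarz the double series $\sum_{u}\sum_{v\subseteq u} C^{2|u|}\eta_u\, m^\up_v(\bx,\by)$ is therefore absolutely convergent, being dominated by the geometric mean of the corresponding convergent diagonal sums. This absolute convergence justifies the rearrangement $M^\up(\bx,\by)=\sum_u\eta_u\widehat m_u(\bx,\by)$. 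The same argument shows that $M(\bx,\by)=\sum_u \eta_u m_u(\bx,\by)$ converges absolutely on $\Z^\up$. Consequently
\[
M^\up - M|_{\Z^\up\times\Z^\up} = \sum_{u\in\U_d}\eta_u\,(\widehat m_u-m_u)\quad\text{pointwise on }\Z^\up\times\Z^\up .
\]

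\textbf{Step 3: positive definiteness.} Each $\widehat m_u-m_u$ is a reproducing kernel on $D^{\N}$. Indeed, Lemma~\ref{l11} applied factorwise to the contractive containment of $H(m)$ in $H(C^2(1+m^\up))$ shows that $H(m_u)$ is contractively contained in $H(\widehat m_u)$, and Lemma~\ref{l10} converts this into the kernel statement. A pointwise convergent series of positive semidefinite kernels with non-negative coefficients is positive semidefinite, since each finite quadratic form $\sum_{i,k} c_i\overline{c_k}(\cdots)(\bx_i,\bx_k)$ is a limit of non-negative partial sums. Hence $M^\up-M|_{\Z^\up\times\Z^\up}$ is a reproducing kernel on $\Z^\up$, and Lemma~\ref{l12} yields $H(M)\sqsubseteq H(M^\up)$ with a restriction map of norm at most one.

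The main obstacle is Step~2: justifying the interchange of the infinite summations off the diagonal. I expect the Cauchy--Schwarz domination by the diagonal sums, which are finite precisely because of Step~1 and the definition of $\Z^\up$, to settle it.
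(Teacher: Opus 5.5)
Your proposal is correct and follows the paper's proof essentially step for step. It uses the same kernels $\widehat m_u$, the same Cauchy--Schwarz domination by diagonal sums to rearrange $M^\up$ into $\sum_u \eta_u \widehat m_u$ on $\Z^\up\times\Z^\up$, the same diagonal comparison for $\Z^\up\subseteq\Z$, and the same conclusion via Lemmas~\ref{l11}, \ref{l10} and \ref{l12}. The only additions on your side are a change of order and an explicit argument that a pointwise convergent non-negative combination of kernels is again a kernel, which the paper leaves as ``easily seen''.
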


\begin{proof}
We proceed as in the proof of Theorem~\ref{Lemma1}, and to do so
we only have to verify some summability conditions.
Again we consider
\[
\widehat{m}_u(\bx,\by) := 
\prod_{j\in u} \left(C^2(1+m^{\up}(x_j,y_j)\right)
= C^{2|u|} \sum_{\substack{v \in \U_\infty \\ v \subseteq u}}
m^{\up}_{v}(\bx,\by)
\]
for $u \in \U_\infty$ and $\bx,\by \in D^\N$.
First of all, we note that
\[
\sum_{v\in \U_\infty} \eta_v^{\up} \, |m^{\up}_{v} (\bx,\by)| =
\sum_{v\in \U_\infty} 
\sum_{\substack{u \in \U_\infty\\  v \subseteq u }}
C^{2 |u|}\eta_u \, |m^{\up}_{v} (\bx,\by)| < \infty
\]
for $\bx,\by \in \Z^{\up}$. Indeed, for $\bx=\by$ we employ the
definition of $\Z^\up$, and in the general case we use
the Cauchy-Schwarz inequality together with
\[
|m^{\up}_{v} (\bx,\by)|^2 \leq
m^{\up}_{v} (\bx,\bx) \cdot
m^{\up}_{v} (\by,\by).
\] 
Thus we have 
\begin{align}\label{eq:for_next_remark}
M^{\up} (\bx,\by) 
&= \sum_{v\in \U_\infty} \eta_v^{\up} \, m^{\up}_{v} (\bx,\by) 
=\sum_{v\in \U_\infty} \left( 
\sum_{\substack{u \in \U_\infty\\  v \subseteq u }}
C^{2 |u|}\eta_u \right) m^{\up}_{v} (\bx,\by)  \notag \\
&=\sum_{u\in \U_\infty}C^{2\left|u\right|}\eta_u
\sum_{\substack{v \in \U_\infty \\ v\subseteq u}} m^{\up}_{v}(\bx,\by) 
= \sum_{u\in \U_\infty} \eta_u \widehat{m}_u (\bx,\by)
\end{align}
with absolutely convergent series.

Now, due Remark~\ref{r5},  
we know that  
$m(x,x) \le C^2 (1+ m^{\up}(x,x))$ 
for all $x\in D$. This implies for all $\bx \in \Z^{\up}$ that
\[
\sum_{u\in \U_\infty} \eta_u m_u  (\bx, \bx)
\le \sum_{u\in \U_\infty} \eta_u \widehat{m}_u  (\bx, \bx)
\le \sum_{v\in \U_\infty} \eta_v^{\up} m_v^{\up}  (\bx, \bx)
< \infty,
\]
i.e., we have $\Z^{\up} \subseteq \Z$.
Moreover, we obtain for all $\bx, \by \in \Z^{\up}$ that
\[
\left( M^{\up} - M \right) (\bx, \by) =
\sum_{u\in \U_\infty} \eta_u \left(\widehat{m}_u - m_u \right) (\bx, \by)
\]
and the sum converges absolutely. 
As already stated in the proof of Theorem~\ref{Lemma1}, 
the fact that $H(m)$ is contractively contained in 
$H(C^2(1+m^{\up}))$ implies that 
the same holds true for the spaces 
$H(m_{u})$ and $H(\widehat{m}_u)$.
This implies that $\widehat{m}_u - m_u$ is a reproducing kernel, 
cf.\ Lemma~\ref{l10}, and this property is inherited by 
$\left( \widehat{m}_u - m_u \right)|_{\Z^{\up}\times \Z^{\up}}$.
From that it is easily seen that 
$M^{\up} - M|_{\Z^{\up}\times \Z^{\up}}$ is a reproducing kernel. 
Now Lemma~\ref{l12} yields the statement of the theorem. 
\end{proof}

In the final remark of this section we demonstrate that the 
domains $\Z$ and $\Z^{\up}$ can indeed be different. 

\begin{rem}
We again consider the weights from 
Remark~\ref{rem:dec_extremal_case} in the case
$d=\infty$:
For $u \in \U_d$ we put $\eta_u := 2^{-j}$ if $u =
\{1,\dots,2^j\}$ for any $j \in \N$ and $\eta_u := 0$ otherwise.
Furthermore, we consider the kernel $m$ on the unit interval 
$D := [0,1]$ given by 
$m(x,y) = 1$ if $x=y\in (0,1]$ and $m(x,y) = 0$ 
otherwise. We take $m^{\up}:= m$
and may therefore choose $C=1$. 
Then, on the one hand, we have for all $\bx \in [0,1]^\N$ that
\[
M(\bx,\bx) 
= \sum_{u\in \U_\infty} \eta_u \, m_{u} (\bx,\bx) 
= \sum_{j\in \N} 2^{-j} \prod_{\nu =1}^{2^j}  m(x_\nu, x_\nu)
\le \sum_{j\in \N} 2^{-j} < \infty.
\]
This shows that $\Z = [0,1]^\N$.  On the other hand, 
due to~\eqref{eq:for_next_remark},
\[
M^{\up} (\bx,\bx) 
= \sum_{u\in \U_\infty} \eta_u \widehat{m}_u (\bx,\bx)
= \sum_{j\in \N} 2^{-j} \prod_{\nu =1}^{2^j} 
\big( 1 + m^{\up}(x_\nu, x_\nu) \big)
=  \sum_{j\in \N} 
2^{a_j(\bx)-j},
\]
where
\[
a_j(\bx) := | \{ \nu \in \{1,\dots,2^j\} : x_\nu > 0\}.
\]
This shows that 
\[
\Z^{\up} \subsetneq 
\{ \bx \in [0,1]^\N :
\exists  j_0 \in \N \, \forall j \ge j_0: 
a_j(\bx)
\le j\}
\subsetneq [0,1]^\N = \Z.
\]
\end{rem}

\section{Application to Computational Problems}%
\label{Sec:Application}

Now we describe how to apply our findings from the previous 
sections to the study of the complexity of continuous computational
problems.

In tractability analysis we are interested in continuous 
computational problems that are defined on a scale of function 
spaces $H := H_d$ with $d \in \N$, where the parameter
$d$ typically denotes the number of variables the functions in 
$H_d$ depend on. An important goal is to find algorithms that scale 
well with respect to the 
dimension parameter $d$ and help us to identify whether 
the curse of dimensionality can be broken in a given setting. 

Tractability analysis still deals with a finite (although 
arbitrarily large) number of variables. Now many applications 
rely on models that depend on an infinite (but still countable)
number of variables,
which motivates the study of spaces $H := H_\infty$ of functions
that depend on infinitely-many variables.
Examples include stochastic models that are based 
on infinite sequences of independent random variables.
The complexity analysis of the resulting continuous problems may 
be viewed as the limit of tractability analysis for 
$d$-variate functions, where $d$ tends to infinity.

Let $d \in \N \cup \{\infty\}$.
Some specific features of the function spaces can be very helpful 
for the analysis of certain types of algorithms. For instance, for 
the analysis of unbiased randomized algorithms it may be extremely 
helpful if the norm on $H_d$ induces an ANOVA
decomposition on $H_d$, which can be used to analyze 
the error ($=$ variance) of the algorithms see, e.g., \citet{BG14}, 
in particular, Lemma~2.1. Contrarily, for the analysis of 
deterministic algorithms it may be helpful to use the Taylor 
expansion of input functions in certain points and therefore 
it is beneficial to work with an underlying anchored function space 
decomposition, see, e.g., \citet[Section~5.3]{DG12}.

In general, a suitable embedding or restriction mapping between
functions spaces may help to transfer results from 
a function space with favorable features to another
function space.
There are several general embedding results known in the case where 
$H_d$ is the $d$-fold tensor product of 
reproducing kernel Hilbert spaces of a single variable;
examples include Hilbert spaces with reproducing
kernels of the form \eqref{g17},
where the weights $\gamma_u$ are product weights,
and tensor products of
spaces of increasing smoothness, see, 
e.g., \citet{HR13}, \citet{GHHR17}, and \citet{GHHRW2019}.

Due to the machinery we have developed in the previous 
sections, we now have a general embedding approach at hand that 
still works in the case where $H_d$ is not necessarily of tensor 
product form. This is, e.g., the case if we consider weighted RKHSs, 
where the weights are not of product form but are, e.g., product 
and order-dependent or 
of finite-order instead, see Section~\ref{SEC:Part_Weights}. 

Let us now consider for $d\in \N \cup \{\infty\}$ the reproducing 
kernel 
\[
K := M^{\bg,k}
\]
as defined in Section~\ref{s4.1} by means of a
\begin{itemize}
\item[(B1)]
reproducing kernel 
$k \neq 0$ over a domain $D$
and weights $\bg \in \W_d$ such that
$\Z^{\bg,k} \neq \emptyset$.
\end{itemize}
\noindent
Suppose that we are, e.g., interested in solving a computational problem 
on the space $H := H(K)$ of functions of $d$ variables, 
further specified by a Banach space $G$ and a continuous 
linear solution operator 
\[
S \colon H(K) \to G.
\]
Solution operators 
of this type arise, e.g., for numerical integration,
function recovery, or well-posed linear operator equations.

The study of computational
problems on weighted RKHSs was initiated in \citet{SW98} for 
$d \in \N$. For more results in the case of finitely 
many variables we refer to the monographs \citet{NW08, NW10, NW12}.
The study of numerical problems on weighted RKHSs of functions with 
an infinite number of variables was initiated in \citet{HW01}, 
\citet{HMNR10}, and \citet{KSWW10a}. More recent results can be 
found in \citet{GHHR17}, and, for closely related spaces, in 
\citet{GHHRW2019, GHRR2024, GRR2024}.

We are allowed to use (deterministic or randomized) linear 
algorithms $A$, and the error criterion we consider is the 
(deterministic or randomized) worst case error
\[
\err \left( A, S, K \right) 
:= \sup_{\|f\|_{K} \le 1} 
\left(\EE \left( \| S f - A f\|^2_{G} \right) \right)^{1/2},
\]
where $\EE$ means 
taking the expected value of the expression in parentheses. 
In the case of deterministic algorithms this simplifies to
\[
\err \left( A, S, K \right) 
:= \sup_{\|f\|_{K} \le 1} 
\| S f - A f\|_{G}.
\]

Assume that for some reason the norm and the underlying function 
decomposition on $H(K)$ induced by the given univariate 
kernel $k$ is not well-suited for the type of error 
analysis one wants to perform, e.g., deriving upper error bounds for 
specific algorithms or lower error bounds valid for whole classes 
of algorithms.  Therefore we choose a different 
\begin{itemize}
\item[(B2)]
reproducing kernel $l \neq 0$ over the domain $D$
that satisfies either
\begin{equation}\label{inclusion_upper_bounds}
H(k) \subseteq H(1+l)
\end{equation}
if we are interested in upper error bounds or
\begin{equation}\label{inclusion_lower_bounds}
H(l) \subseteq H(1+k)
\end{equation}
if we are interested in lower error bounds.
\end{itemize}

We may choose, e.g., an anchored or an ANOVA kernel $l$. For
certain weights $\br \in \W_d$ we may obtain a RKHS 
$H(M^{\br,l})$ with a more favorable function space 
decomposition, so that in this setting we may perform the type of 
error analysis we are interested in. 
We add that the stronger assumptions
$H(k) \subseteq H(l)$ or $H(l) \subseteq H(k)$ are
typically not satisfied in the 
applications to the computational problems we have
in mind.

The question is now: \emph{How to choose the weights 
$\br$ such that we can transfer our results on 
$H(M^{\br,l})$ back to $H(K)$?}
This will be described in the next two subsections, where we 
treat upper and lower error bounds separately.

\subsection{Upper Error Bounds}

Throughout the study of upper error bounds we assume
the following:
The assumptions (B1) and (B2) with inclusion 
\eqref{inclusion_upper_bounds} are satisfied, and
$C^\up > 0$ is an upper bound on the 
norm of the embedding of $H(k)$ into $H(1+l)$.

Assuming that $\bg \in \SSS_{d,C^\up}$,
we simply choose 
\[
\br := \bg^{\up} := T^{\up}_{d,C^\up} \bg,
\]
and assuming that $\Z^{\bg^\up\!,l} \neq \emptyset$ we put
\[
L^\up := M^{\bg^\up,l}.
\]
For product, POD, and finite-order weights,
conditions for $\bg \in \SSS_{d,C}$ to hold for any value of $C>0$
are presented in Proposition~\ref{l4}.

Accordingly, the assumptions (A1)--(A3) are satisfied with 
\[
m := k,\ m^\up := l,\ C := C^\up,\ \be := \bg,\ \be^\up := \bg^\up.
\]
Due to Theorems~\ref{Lemma1} and \ref{Lemma4} we have 
$\Z^{\bg^\up\!,l} \subseteq \Z^{\bg,k}$ and
$H(K) \sqsubseteq H(L^\up)$,
and the corresponding 
(embedding or restriction) map
\[
\mathfrak{i^\up} \colon H(K) \to H(L^\up)
\]
has norm at most one.

Assume that the continuous linear map
\[
S^{\up} \colon H(L^\up) \to G
\]
is a ``natural extension'' of $S$ in the sense that it satisfies 
\[
S = S^{\up} \circ \mathfrak{i^\up}.
\]
For any linear algorithm $A^{\up}$ on $H(L^\up)$ we get a 
``restricted algorithm'' $A$ on $H(K)$ via
\[
A:= A^{\up}  \circ 
\mathfrak{i^\up},
\]
which is again linear. 
Furthermore, if $A^\up$ is based on function evaluations,
then the same holds true for $A$.

\begin{corollary}\label{c1}
If $\bg \in \SSS_{d,C^\up}$ and
$\Z^{\bg^\up\!,l} \neq \emptyset$, then we have
\[
\err(A, S, K) \leq \err(A^{\up}, S^{\up}, L^\up). 
\]
\end{corollary}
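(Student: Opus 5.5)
The plan is a direct reduction to the embedding results of Section~\ref{Sec:Embeddings}, followed by a pointwise comparison of errors on the unit ball of $H(K)$. First I record that, under $\bg \in \SSS_{d,C^\up}$ and $\Z^{\bg^\up\!,l} \neq \emptyset$, the assumptions (A1)--(A3) hold with $m := k$, $m^\up := l$, $C := C^\up$, $\be := \bg$ and $\be^\up := \bg^\up$. Indeed, (A1) is (B2) together with the choice of $C^\up$, (A2) is the definition of $\bg^\up$, and (A3) is the hypothesis on $\Z^{\bg^\up\!,l}$. Therefore Theorem~\ref{Lemma1} for $d \in \N$, respectively Theorem~\ref{Lemma4} for $d = \infty$, shows that $\mathfrak{i^\up}\colon H(K) \to H(L^\up)$, $f \mapsto f|_{\Z^{\bg^\up\!,l}}$, is a well-defined bounded linear map with $\|\mathfrak{i^\up}\| \le 1$.

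Next, fix $f \in H(K)$ with $\|f\|_K \le 1$ and put $g := \mathfrak{i^\up} f$, so that $\|g\|_{L^\up} \le 1$. Using $S = S^\up \circ \mathfrak{i^\up}$ and $A = A^\up \circ \mathfrak{i^\up}$, I get
\[
Sf - Af = S^\up g - A^\up g .
\]
In the randomized case this identity holds for every realization of the random algorithm, with $A$ built from the same randomness as $A^\up$. Hence
\[
\EE\bigl(\|Sf - Af\|_G^2\bigr) = \EE\bigl(\|S^\up g - A^\up g\|_G^2\bigr) \le \err(A^\up, S^\up, L^\up)^2 ,
\]
since $g$ lies in the unit ball of $H(L^\up)$. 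Taking the supremum over all $f$ with $\|f\|_K \le 1$ gives the claim. The deterministic case is the special case without expectation.

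The only point needing care is the randomized setting, where $A$ has to be interpreted realization-wise as $A^\up_\omega \circ \mathfrak{i^\up}$, so that linearity and the composition identity hold pathwise. Once that is fixed, the image of the unit ball of $H(K)$ under $\mathfrak{i^\up}$ lies in the unit ball of $H(L^\up)$, and the supremum defining $\err(A^\up, S^\up, L^\up)$ ranges over a set at least as large. I do not expect a genuine obstacle beyond checking (A3), which is exactly the stated nonemptiness hypothesis.
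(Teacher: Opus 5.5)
Your proposal is correct and follows the paper's route. The paper checks (A1)--(A3) with $m := k$, $m^\up := l$, $C := C^\up$, $\be := \bg$, $\be^\up := \bg^\up$, and invokes Theorems~\ref{Lemma1} and \ref{Lemma4} to get $\|\mathfrak{i^\up}\| \le 1$; the error bound then follows from $S = S^\up \circ \mathfrak{i^\up}$ and $A = A^\up \circ \mathfrak{i^\up}$, a step the paper leaves implicit and which your pathwise treatment of randomized algorithms spells out correctly.
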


Consequently, any upper error bound for $S^{\up}$ and $A^{\up}$ 
on the unit ball in $H(L^\up)$ also holds for $S$ and $A$ on 
the unit ball in $H(K)$.

\begin{rem}\label{r21}
It is important to also observe 
the decay of the transformed weights $\bg^\up$, 
due to its relation to minimal errors for 
computational problems and its appearance in error bounds. 

For the case $d \in \N$, in many problems defined on 
weighted function spaces, the decay is directly related to 
the exponent of strong tractability, which plays a 
prominent role in the 
complexity analysis for high-dimensional
problems, see \citet{NW08,NW10,NW12}.
The inverse of the exponent of strong tractability is (essentially) the 
best possible convergence rate that can appear in upper bounds on minimal 
errors, if we require the bounds to be independent of 
the dimension $d$.  For corresponding error bounds we refer to, e.g., 
\citet[Theorems~5.8 to 5.10, 5.12 and 6.10]{DKS13}, 
and also to numerous theorems in \citet{DP10}, 
\citet{DKP22}, or again to \citet{NW08,NW10,NW12}.

In the case  $d = \infty$, in all results we are aware of, the 
best possible convergence rates of the minimal errors depend 
crucially on the decay of the weights of the underlying RKHS. 
For corresponding results we refer to, e.g.,
\citet{HMNR10}, \citet{KSWW10a}, \citet{Gne10}, 
\citet{PW11}, and \citet{GRR2024}.

This explains the relevance of
the results shown in Proposition~\ref{le:decay_POD}, 
Corollary~\ref{le:decay_prod}, and Proposition~\ref{le:decay_FO}
if we want to study upper error bounds.
\end{rem}

\subsection{Lower Error Bounds}\label{s5:1}

Throughout the study of lower error bounds we assume
the following:
The assumptions (B1) and (B2) with inclusion 
\eqref{inclusion_lower_bounds} are satisfied, and
$C^\lo > 0$ is an upper bound on the 
norm of the embedding of $H(l)$ into $H(1+k)$.

Concerning the weights,
let us first consider the particular case where $\bg \in \M_{d}$. 
The latter property is characterized for product weights in 
Proposition~\ref{l5}. 
For lower bounds we first build the weights 
\[
\br := \bg^{\lo} := T^{\lo}_{d,C^\lo} \bg,
\]
and assuming that $\Z^{\bg^\lo\!,l} \neq \emptyset$ we put
\[
L^\lo := M^{\bg^\lo,l}.
\]

\begin{lemma}\label{l22}
If $\bg \in \M_d$, 
then we have $\Z^{\bg,k} \subseteq \Z^{\bg^\lo,l}$ and
\[
H(L^{\lo}) \sqsubseteq H(K), 
\]
and the corresponding (embedding or restriction) map
$\mathfrak{i}^\lo \colon H(L^\lo) \to H(K)$
has norm at most one. 
\end{lemma}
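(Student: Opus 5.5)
The plan is to reduce the lemma to the embedding results of Section~\ref{Sec:Embeddings}, with the roles of the two kernels interchanged. Put $m := l$, $m^\up := k$, $C := C^\lo$, $\be := \bg^\lo = T^\lo_{d,C^\lo}\bg$, and let $\be^\up := T^\up_{d,C^\lo}\be$. Then (A1) holds by (B2) with inclusion \eqref{inclusion_lower_bounds} and the choice of $C^\lo$.

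For (A2) we need $\bg^\lo \in \SSS_{d,C^\lo}$. For $d\in\N$ this is trivial. For $d=\infty$ it follows from \eqref{range_T_lo} in Theorem~\ref{t1b}, or directly from Lemma~\ref{l31}.

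Next we relate $\be^\up$ to $\bg$. For $d\in\N$, Theorem~\ref{t1a} gives $\be^\up = \bg$. For $d=\infty$, inequality \eqref{g77} gives $\be^\up = (T^\up_{d,C^\lo}\circ T^\lo_{d,C^\lo})\bg \le \bg$, with equality only on $\A_d$. So in general we only get an inequality, and the argument must use $\be^\up \le \bg$ rather than equality.

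From here, (A3) follows: $\Z^{\be^\up,k} \supseteq \Z^{\bg,k} \neq \emptyset$ by (B1), since all terms $\eta^\up_u k_u(\bx,\bx)$ are non-negative and dominated by $\gamma_u k_u(\bx,\bx)$.

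Now apply Theorem~\ref{Lemma1} (for $d\in\N$) or Theorem~\ref{Lemma4} (for $d=\infty$) to the kernels $M^{\bg^\lo,l}=L^\lo$ and $M^{\be^\up,k}$. This gives $\Z^{\be^\up,k}\subseteq \Z^{\bg^\lo,l}$ and $H(L^\lo)\sqsubseteq H(M^{\be^\up,k})$, with restriction of norm at most one. In particular $\Z^{\bg,k}\subseteq \Z^{\be^\up,k}\subseteq\Z^{\bg^\lo,l}$, which is the domain inclusion claimed.

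It remains to pass from $M^{\be^\up,k}$ to $K=M^{\bg,k}$. On $\Z^{\bg,k}\times\Z^{\bg,k}$ we have
\[
K - M^{\be^\up,k} = \sum_{u\in\U_d} (\gamma_u-\eta^\up_u)\, k_u .
\]
This series converges absolutely: use the Cauchy--Schwarz argument from the proof of Theorem~\ref{Lemma4} together with $0\le \gamma_u-\eta^\up_u\le\gamma_u$. Each summand is a non-negative multiple of a reproducing kernel, so the difference is a reproducing kernel. By Lemma~\ref{l12}, restriction from $H(M^{\be^\up,k})$ over $\Z^{\be^\up,k}$ to $\Z^{\bg,k}$ maps into $H(K)$ with norm at most one.

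The step needing most care is composing the two restrictions. We first restrict $H(L^\lo)$ from $\Z^{\bg^\lo,l}$ to $\Z^{\be^\up,k}$, then further to $\Z^{\bg,k}$. By the restriction theorem, restrictions compose to the restriction from $\Z^{\bg^\lo,l}$ to $\Z^{\bg,k}$, and the norms multiply, giving norm at most one. One can alternatively bypass the composition by applying Lemma~\ref{l12} once to the kernel difference
\[
K - L^\lo|_{\Z^{\bg,k}\times\Z^{\bg,k}} = \bigl(K-M^{\be^\up,k}\bigr) + \bigl(M^{\be^\up,k}-L^\lo\bigr)\big|_{\Z^{\bg,k}\times\Z^{\bg,k}} ,
\]
which is a sum of two reproducing kernels. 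For $d\in\N$ all domains coincide with $D^d$, and the map is the identical embedding.
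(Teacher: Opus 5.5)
Your proposal is correct and follows essentially the same route as the paper. You verify (A1)--(A3) with the roles of $k$ and $l$ swapped and $\be := \bg^\lo$, use $T^\up_{d,C^\lo}\bg^\lo \le \bg$ (with equality for $d\in\N$) to obtain (A3) and the domain inclusions, apply Theorem~\ref{Lemma1} or Theorem~\ref{Lemma4}, and then pass to $K$ via Lemma~\ref{l12}, merely spelling out details the paper leaves implicit (absolute convergence of the difference kernel and the composition of the two restrictions).
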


\begin{proof}
At first we verify that the assumptions (A1)--(A3)
are satisfied with
\[
m := l,\ m^\up := k,\ C := C^\lo,\ \be := \bg^\lo,\ 
\be^\up := T^\up_{d,C^\lo} \bg^\lo.
\]
Due to \eqref{inclusion_lower_bounds} we have (A1), and 
concerning (A2) we note that
$\bg^\lo \in \SSS_{d,C^\lo}$, see 
\eqref{range_T_lo} in Theorem~\ref{t1b} for the non-trivial
case $d=\infty$. Hence we may consider the weights
\[
(\bg^{\lo})^{\up}:= T^{\up}_{d,C^\lo} \bg^{\lo} .
\]
For $d \in \N$ we have $(\bg^{\lo})^{\up} = \bg$, see
Theorem~\ref{t1a}, and for $d = \infty$ we have at least
$(\bg^{\lo})^{\up} \in \M_d$ 
and 
\begin{equation}\label{g96}
(\bg^{\lo})^{\up} \le \bg,
\end{equation}
see 
\eqref{range_T_up} and \eqref{g77} in Theorem~\ref{t1b}. 
The last inequality implies the following. First of all,
$\Z^{\bg,k} \subseteq \Z^{(\bg^{\lo})^{\up},k}$, and since
$\Z^{\bg,k} \neq \emptyset$ by assumption (B1), we have (A3).

Furthermore, we may use Theorems~\ref{Lemma1} and \ref{Lemma4} to
obtain 
\[
H(L^{\lo}) \sqsubseteq H\left(M^{(\bg^{\lo})^{\up},k}\right),
\]
and the corresponding (embedding or restriction) map
$f \mapsto f|_{\Z^{(\bg^{\lo})^{\up},k}}$ has norm at most one.
Moreover,
\[
H\left(M^{(\bg^{\lo})^{\up},k}\right)  \sqsubseteq H(K)
\]
with a corresponding (embedding or restriction) map
$f \mapsto f|_{\Z^{\bg,k}}$ of norm at most one, which follows from
Lemma~\ref{l12} and \eqref{g96}.
This establishes the claim.
\end{proof}

In the sequel, we consider the general case $\bg \in \W_d$ and 
any $\bg_\ast \in \M_{d}$ with $\bg_\ast \leq \bg$. 
For lower bounds we build the weights 
\[
\br := \bg^{\lo}_\ast := T^{\lo}_{d,C^\lo} \bg_\ast
\]
and proceed as before, i.e.,
assuming that $\Z^{\bg^\lo_\ast,l} \neq \emptyset$ we put
\begin{equation}\label{L_lo_ast}
L^\lo_\ast := M^{\bg^\lo_\ast,l}.
\end{equation}

\begin{lemma}\label{l24}
For $\bg \in \W_d$ 
we choose weights $\bg_\ast \in \M_{d}$ with 
$\bg_\ast \leq \bg$
and define $L^\lo_\ast$ as in \eqref{L_lo_ast}.
Then we have $\Z^{\bg,k} \subseteq \Z^{\bg^\lo_\ast,l}$ and
\[
H(L^{\lo}_\ast) \sqsubseteq H(K), 
\]
and the corresponding (embedding or restriction) map
$\mathfrak{i}^\lo \colon H(L^\lo_\ast) \to H(K)$
has norm at most one.
\end{lemma}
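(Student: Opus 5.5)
The plan is to reduce Lemma~\ref{l24} to Lemma~\ref{l22} applied to $\bg_\ast$, and then to compose with a restriction from $H(M^{\bg_\ast,k})$ to $H(K)$ that uses only $\bg_\ast \le \bg$. The natural intermediate kernel is $K_\ast := M^{\bg_\ast,k}$ over $\Z^{\bg_\ast,k}$.

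First, the domains. Since $\bg_\ast \le \bg$ and $k_u(\bx,\bx) \ge 0$, we have
\[
\sum_{u\in\U_d} \gamma_{\ast,u}\, k_u(\bx,\bx) \le \sum_{u\in\U_d} \gamma_u\, k_u(\bx,\bx)
\]
for every $\bx \in D^{[d]}$. Hence $\Z^{\bg,k} \subseteq \Z^{\bg_\ast,k}$, and in particular $\Z^{\bg_\ast,k} \neq \emptyset$ by (B1). So $\bg_\ast$ together with $k$ satisfies (B1). Because $\bg_\ast \in \M_d$, Lemma~\ref{l22} applies verbatim with $\bg$ replaced by $\bg_\ast$; its proof used only $\bg \in \M_d$, (B1), and the inclusion \eqref{inclusion_lower_bounds}. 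This gives $\Z^{\bg_\ast,k} \subseteq \Z^{\bg^\lo_\ast,l}$ and $H(L^\lo_\ast) \sqsubseteq H(K_\ast)$, with restriction map of norm at most one. In particular $\Z^{\bg^\lo_\ast,l} \neq \emptyset$, so $L^\lo_\ast$ is well defined. Combined with the previous inclusion, this yields $\Z^{\bg,k} \subseteq \Z^{\bg^\lo_\ast,l}$.

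Second, the restriction from $K_\ast$ to $K$. On $\Z^{\bg,k} \times \Z^{\bg,k}$ we have
\[
K - K_\ast|_{\Z^{\bg,k}\times\Z^{\bg,k}} = \sum_{u\in\U_d} (\gamma_u - \gamma_{\ast,u})\, k_u .
\]
The series converges absolutely by the Cauchy--Schwarz argument from the proof of Theorem~\ref{Lemma4}, dominated termwise by the convergent series for $K$. Each term is a non-negative multiple of a reproducing kernel, and the class of positive semidefinite kernels is closed under non-negative combinations and pointwise limits. So this difference is a reproducing kernel, and Lemma~\ref{l12} gives $H(K_\ast) \sqsubseteq H(K)$ with restriction of norm at most one.

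Finally, compose the two restriction maps. Since $\Z^{\bg,k} \subseteq \Z^{\bg_\ast,k} \subseteq \Z^{\bg^\lo_\ast,l}$, restricting in two steps equals restricting directly, and the norm of the composite is at most one.

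The only point needing care is the convergence and positive semidefiniteness of the difference series in the case $d=\infty$. This is handled exactly as in Theorem~\ref{Lemma4}, or as in the last step of the proof of Lemma~\ref{l22}, where the same argument with \eqref{g96} was used.
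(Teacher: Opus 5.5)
Your proposal is correct and follows the paper's proof. Both factor the map through $H(M^{\bg_\ast,k})$, using $\bg_\ast \le \bg$ with Lemma~\ref{l12} for the step into $H(K)$ and Lemma~\ref{l22} applied to $\bg_\ast$ for the step out of $H(L^\lo_\ast)$. You also spell out details the paper leaves implicit, namely that $\bg_\ast$ satisfies (B1) and that $K - M^{\bg_\ast,k}$, restricted to $\Z^{\bg,k}\times\Z^{\bg,k}$, is a reproducing kernel.
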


\begin{proof}
Since $\bg_{\ast} \le \bg$, we obtain $\Z^{\bg,k} \subseteq
\Z^{\bg_\ast,k}$, and Lemma~\ref{l12} yields
$H(M^{\bg_{\ast},k})  \sqsubseteq H(K)$ with a 
corresponding (embedding or restriction) map
$f\mapsto f|_{\Z^{\bg,k}}$ of norm at most one.
Since $\bg_\ast \in \M_d$, we may apply
Lemma~\ref{l22} to obtain $\Z^{\bg_\ast,k} \subseteq
\Z^{\bg^\lo_\ast,l}$ and
$H(L^{\lo}_\ast) \sqsubseteq H(M^{\bg_\ast,k})$ with
a corresponding (embedding or restriction) map
$f \mapsto f|_{\Z^{\bg_\ast,k}}$ of norm at most one.
This establishes the claim.
\end{proof}

On $H(L^{\lo}_\ast)$ we may consider the continuous linear 
solution operator
\[
S^{\lo} := S \circ \mathfrak{i}^{\lo} \colon
H(L^{\lo}_\ast) \to G\,,\, f \mapsto S(f|_{\Z^{\bg,k}}).
\]
For any linear algorithm $A$ on $H(K)$ we may study the 
linear algorithm
\[
A^{\lo}:= A  \circ \mathfrak{i}^{\lo}
\]
on $H(L^{\lo}_\ast)$. If $A$ is based on function evaluations,
then the same holds true for $A^\lo$.

\begin{corollary}\label{c2}
For $\bg \in \W_d$ 
we choose weights  $\bg_\ast \in \M_{d}$ with 
$\bg_\ast \leq \bg$ and define $L^\lo_\ast$ as in \eqref{L_lo_ast}.
Then we have
\[
\err(A^{\lo}, S^{\lo}, L^{\lo}_\ast) \le \err(A, S, K).
\]
\end{corollary}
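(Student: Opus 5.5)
The inequality will follow from the factorization of $S^\lo$ and $A^\lo$ through the restriction map $\mathfrak{i}^\lo$ of Lemma~\ref{l24}, together with the bound $\|\mathfrak{i}^\lo\| \le 1$. By Lemma~\ref{l24}, $\Z^{\bg,k} \subseteq \Z^{\bg^\lo_\ast,l}$, $H(L^\lo_\ast) \sqsubseteq H(K)$, and $\mathfrak{i}^\lo \colon H(L^\lo_\ast) \to H(K)$, $f \mapsto f|_{\Z^{\bg,k}}$, is linear with norm at most one. By definition, $S^\lo = S\circ\mathfrak{i}^\lo$ and $A^\lo = A\circ\mathfrak{i}^\lo$. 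Hence, for every $f \in H(L^\lo_\ast)$,
\[
S^\lo f - A^\lo f = S g - A g, \qquad g := \mathfrak{i}^\lo f \in H(K).
\]
In the randomized case this identity holds pointwise for each realization of $A$, since $\mathfrak{i}^\lo$ is deterministic. Consequently
\[
\EE\left(\|S^\lo f - A^\lo f\|_G^2\right) = \EE\left(\|S g - A g\|_G^2\right).
\]

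Next I take the supremum over the unit ball. If $\|f\|_{L^\lo_\ast} \le 1$, then $\|g\|_K \le \|\mathfrak{i}^\lo\|\,\|f\|_{L^\lo_\ast} \le 1$, so $g$ lies in the unit ball of $H(K)$. Thus every quantity entering the supremum defining $\err(A^\lo, S^\lo, L^\lo_\ast)$ also enters the supremum defining $\err(A,S,K)$, and
\[
\err(A^\lo,S^\lo,L^\lo_\ast) = \sup_{\|f\|_{L^\lo_\ast}\le 1} \left(\EE\|S\,\mathfrak{i}^\lo f - A\,\mathfrak{i}^\lo f\|_G^2\right)^{1/2} \le \sup_{\|g\|_K\le 1} \left(\EE\|Sg-Ag\|_G^2\right)^{1/2} = \err(A,S,K).
\]
The deterministic case is the special case in which the expectation is trivial.

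The only point needing care is that the randomized worst-case error is defined as a supremum of per-function mean squared errors, not as an expectation of a supremum. Because of this, the pointwise identity above passes directly through the expectation and no measurability issues arise. There is no genuine obstacle: the content lies entirely in Lemma~\ref{l24}, which in turn rests on Lemma~\ref{l22} and Theorem~\ref{t1b}.
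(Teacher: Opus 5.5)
Your proof is correct and follows the argument the paper intends. The paper states the corollary without a separate proof, as an immediate consequence of Lemma~\ref{l24}: $S^{\lo} = S\circ\mathfrak{i}^{\lo}$ and $A^{\lo} = A\circ\mathfrak{i}^{\lo}$, and since $\|\mathfrak{i}^{\lo}\|\le 1$, the map $\mathfrak{i}^{\lo}$ sends the unit ball of $H(L^\lo_\ast)$ into the unit ball of $H(K)$, exactly as you show.
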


Consequently, any lower error bound for $S^{\lo}$ and 
$A^{\lo}$ on $H(L^{\lo}_\ast)$ also holds true for $S$ and  
$A$ on $H(K)$.

\begin{rem}\label{r20}
For $\bg \in \W_d \setminus \M_d$ there seems to be no 
canonical choice of weights 
\[
\bg_\ast \in \Gamma := 
\{ \widetilde{\bg} \in \M_d : \widetilde{\bg} \le \bg \},
\]
but the proof of Lemma~\ref{l24} suggests to choose a `large' 
element of $\Gamma$. In any case the set $\Gamma$ contains a
maximal element, see Appendix~\ref{a3}.
\end{rem}

\appendix

\section{Complete Monotonicity}

In Section~\ref{a1} we show that complete monotonicity of
weights, as introduced in Definition~\ref{def:comp_mon_dec}, is a 
particular instance of a general notion that applies to set functions.
In Section~\ref{a2} we study the case $d \in \N$ and relate complete 
monotonicity of weights to the corresponding notion for functions
on the unit cube $[0,1]^d$.

\subsection{Completely Monotone Set Functions}\label{a1}

Complete monotonicity of non-negative functions is defined and 
studied in \citet{Kurtz74} in the following setting.
Let $\B$ denote the Borel $\sigma$-algebra in a Polish
space $X$ that is locally compact and $\sigma$-compact. For 
$\bg \colon \B \to \R$ and $u, v \in \B$ let
\[
\left(\Delta^\prime_v \bg \right)_u := 
\gamma_{v \cup u} - \gamma_u.
\]
The function $\bg$
is called completely monotone if $\bg \geq \bn$ and
\[
(-1)^n 
\left( \Delta^\prime_{v_n} \dots \Delta^\prime_{v_1} \right) 
\bg \geq \bn 
\]
for every $n \in \N$ and all (non-empty) 
$v_1, \dots, v_n \in \B$; 
the set of all completely monotone functions 
on $\B$ will be denoted by $\M(\B)$. 

In the present paper we consider 
functions $\bg \colon \U_d \to \R$. 
Here the assumptions from \citet{Kurtz74} are met
if $d \in \N$, since $\B = \U_d$ if $X := [d]$ with $d \in \N$
is equipped with the discrete metric, but not
for $d = \infty$. However, for merely the definition of complete 
monotonicity from \citet{Kurtz74} to make sense it suffices to consider 
any set $X$ and any collection $\B$ of subsets of $X$ that is closed
with respect to finite unions. Obviously, the latter holds
true also for $\U_d$ with $d = \infty$.
For functions $\bg \colon \U_d \to \R$ with
$d \in \N$ complete monotonicity is also studied in
\citet{Matus94}.

\begin{lemma}
For $d \in \N \cup \{\infty\}$ let $\bg \colon \U_d \to \R$ and
$v := \{s_1,\dots, s_n\}$ with $n \in \N$ and pairwise 
different $s_i \in [d]$. 
For every $u \in \U_d$ we have 
\begin{equation}\label{g8}
\left( \Delta^\prime_{v} \bg \right)_u
= - \sum_{\ell=1}^n \left( 
\Delta_{\{s_\ell\}} \bg\right)_{u \cup \{s_1,\dots,s_{\ell-1}\}}
\end{equation}
and
\begin{equation}\label{g9}
\left( \Delta_v \bg \right)_u =
(-1)^n 
\left( \Delta^\prime_{\{s_n\}} \dots 
       \Delta^\prime_{\{s_1\}} \bg \right)_u.
\end{equation}
\end{lemma}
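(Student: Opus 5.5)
For \eqref{g8} I will use a telescoping argument. Put $w_\ell := u \cup \{s_1,\dots,s_\ell\}$ for $\ell = 0,\dots,n$, so that $w_0 = u$ and $w_n = u \cup v$. Then
\[
\left(\Delta^\prime_v \bg\right)_u = \gamma_{u\cup v} - \gamma_u
= \sum_{\ell=1}^n \left(\gamma_{w_\ell} - \gamma_{w_{\ell-1}}\right).
\]
For each $\ell$ we have $w_\ell = w_{\ell-1} \cup \{s_\ell\}$. By the first-order formula recorded after Lemma~\ref{l1}, namely $\left(\Delta_{\{s\}}\bg\right)_w = \gamma_w - \gamma_{w\cup\{s\}}$, which holds for every $w$ even when $s \in w$, each summand equals $-\left(\Delta_{\{s_\ell\}}\bg\right)_{u\cup\{s_1,\dots,s_{\ell-1}\}}$. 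This gives \eqref{g8}, and no case distinction on $u \cap v$ is needed.

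For \eqref{g9} I will argue by induction on $n$. In the case $n=1$, \eqref{g8} gives $\Delta^\prime_{\{s_1\}}\bg = -\Delta_{\{s_1\}}\bg$, which is the claim. For the inductive step, first note that the single-element identity $\Delta^\prime_{\{s\}} = -\Delta_{\{s\}}$ holds as an operator identity on all of $\R^{\U_d}$, since \eqref{g8} holds for arbitrary $\bg$. Hence
\[
(-1)^n\, \Delta^\prime_{\{s_n\}} \cdots \Delta^\prime_{\{s_1\}} \bg
= \Delta_{\{s_n\}} \cdots \Delta_{\{s_1\}} \bg .
\]
Iterating Lemma~\ref{l2a}, or equivalently the recursion $\Delta_{v\cup\{s\}}\bg = \Delta_{\{s\}}(\Delta_v \bg)$ derived from Lemma~\ref{l2}, then collapses the right-hand side to $\Delta_{\{s_1,\dots,s_n\}}\bg = \Delta_v\bg$. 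Evaluating at $u$ yields \eqref{g9}.

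The argument has no real obstacle; the only care needed is in the order of composition. Since Lemma~\ref{l2a} shows that the operators $\Delta_{\{s\}}$ commute, the ordering of the $s_i$ is irrelevant, and the pairwise distinctness of the $s_i$ is used only to identify the union $\{s_1,\dots,s_n\}$ with $v$.
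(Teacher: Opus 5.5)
Your proof is correct and follows essentially the paper's route: \eqref{g8} by the same telescoping over $u \cup \{s_1,\dots,s_\ell\}$, and \eqref{g9} via $\Delta^\prime_{\{s\}} = -\Delta_{\{s\}}$ combined with the recursion from Lemma~\ref{l2}. The only difference is packaging. The paper writes out the induction step explicitly with Lemma~\ref{l2}, while you apply the single-element identity to every factor and collapse the product with Lemma~\ref{l2a}, which is itself derived from Lemma~\ref{l2}.
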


\begin{proof}
For the proof of \eqref{g8}
we note that
\[
\left( \Delta^\prime_{v} \bg \right)_u
= \sum_{\ell=1}^n \left( 
\gamma_{u \cup \{s_1,\dots,s_\ell\}}
- \gamma_{u \cup \{s_1,\dots,s_{\ell-1}\}} \right) 
= - \sum_{\ell=1}^n \left( 
\Delta_{\{s_\ell\}} \bg\right)_{u \cup \{s_1,\dots,s_{\ell-1}\}}.
\]
For the proof of \eqref{g9} we note that
\[
\left( \Delta_{\{s_1\}} \bg \right)_u
= \gamma_u - \gamma_{u \cup \{s_1\} } = 
- \left( \Delta^\prime_{\{s_1\}} \bg \right)_u.
\]
Inductively, using Lemma~\ref{l2}, we obtain
\begin{align*}
&\left( \Delta_{\{s_1,\dots,s_{n+1}\}} \bg\right)_u\\
& \quad = 
\left( \Delta_{\{s_1,\dots,s_n\}} \bg\right)_u -
\left( \Delta_{\{s_1,\dots,s_n\}} \bg\right)_{u\cup \{s_{n+1}\}} \\
& \quad = 
(-1)^{n} \cdot \left(
\left( \Delta^\prime_{\{s_{n}\}} \dots 
       \Delta^\prime_{\{s_1\}} \bg \right)_u
-
\left( \Delta^\prime_{\{s_{n}\}} \dots 
       \Delta^\prime_{\{s_1\}} \bg \right)_{u \cup \{s_{n+1}\}}
\right)\\
& \quad =
(-1)^{n+1}
\left( \Delta^\prime_{\{s_{n+1}\}} \dots 
       \Delta^\prime_{\{s_1\}} \bg \right)_u,
\end{align*}
which completes the proof of \eqref{g9}.
\end{proof}

The next lemma shows that $\M(\U_d)$ and $\M_d$, 
which has been introduced in Definition~\ref{def:comp_mon_dec},
coincide.

\begin{lemma}
For every $d \in \N \cup \{\infty\}$ we have $\M(\U_d) = \M_d$.
\end{lemma}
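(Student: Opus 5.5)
We prove the two inclusions separately, using \eqref{g9} for the direction $\M_d \subseteq \M(\U_d)$ and \eqref{g8} together with Lemma~\ref{l2} for the reverse direction. In both directions non-negativity $\bg \geq \bn$ is part of the definition, so only the conditions on differences need checking.

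\emph{Inclusion $\M(\U_d) \subseteq \M_d$.} Let $\bg \in \M(\U_d)$ and $u,v \in \U_d$. If $v = \emptyset$, then $\Delta_v \bg = \bg \geq \bn$. Otherwise write $v = \{s_1,\dots,s_n\}$ with pairwise different $s_i$. By \eqref{g9},
\[
\left(\Delta_v \bg\right)_u = (-1)^n \left( \Delta^\prime_{\{s_n\}} \dots \Delta^\prime_{\{s_1\}} \bg \right)_u .
\]
The right-hand side is non-negative by the definition of $\M(\U_d)$, since the sets $\{s_i\}$ are non-empty elements of $\U_d$. Hence $\bg \in \M_d$.

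\emph{Inclusion $\M_d \subseteq \M(\U_d)$.} Let $\bg \in \M_d$. We must show that
\[
(-1)^n \left(\Delta^\prime_{v_n}\cdots\Delta^\prime_{v_1}\bg\right)_u \geq 0
\]
for arbitrary non-empty $v_1,\dots,v_n \in \U_d$. The sets $v_i$ need not be singletons or disjoint, so \eqref{g9} does not apply directly. The plan is to prove by induction on $n$ the stronger claim that
\[
(-1)^n \Delta^\prime_{v_n}\cdots\Delta^\prime_{v_1}\bg
\]
is a finite sum of families of the form $\bu \mapsto (\Delta_w \bg)_{u\cup z}$ with $w,z \in \U_d$. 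Each such family is non-negative because $\bg \in \M_d$, so the claim suffices.

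For $n=1$ the claim is exactly \eqref{g8}, which writes $-\Delta^\prime_{v_1}\bg$ as a sum of terms $(\Delta_{\{s\}}\bg)_{u\cup z}$.

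For the inductive step, note that $\Delta^\prime_v$ is linear and commutes with the shift $\bg \mapsto (\gamma_{u \cup z})_u$, because $(v\cup u)\cup z = v \cup (u\cup z)$. It therefore suffices to check that $-\Delta^\prime_v$ applied to a single family $\Delta_w \bg$ is again a sum of terms of the required form. By Lemma~\ref{lem:diff_op}, $\Delta_w \bg$ lies in $\M_d$. Applying \eqref{g8} to $\Delta_w\bg$ gives terms $(\Delta_{\{s\}}\Delta_w\bg)_{u\cup z}$. By Lemma~\ref{l2a} these equal $(\Delta_{w\cup\{s\}}\bg)_{u\cup z}$, which is a term of the required form. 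This closes the induction.

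The main obstacle is this reverse inclusion, since Kurtz's definition allows arbitrary non-empty sets $v_i$ rather than disjoint singletons. The reduction through \eqref{g8}, combined with closure of $\M_d$ under $\Delta_w$ and under the shifts $\bg \mapsto (\gamma_{u\cup z})_u$, handles this. The only point needing care is that the shift commutes with $\Delta^\prime_v$, and this is immediate from associativity of union.
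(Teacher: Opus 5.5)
Your proof is correct. The first inclusion is exactly the paper's argument via \eqref{g9}, but for the reverse inclusion you take a somewhat different route.

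The paper shows that the cone $\M_d$ itself is invariant under $-\Delta^\prime_v$. It commutes $\Delta_w$ past $\Delta^\prime_v$ to get $(\Delta_w \Delta^\prime_v \bg)_u = (\Delta_w\bg)_{v\cup u} - (\Delta_w\bg)_u$. It then combines Lemma~\ref{lem:diff_op} ($\Delta_w\bg\in\M_d$) with the monotonicity of Lemma~\ref{cmd:claim_i} to see that this is non-positive. Induction then gives $(-1)^n \Delta^\prime_{v_n}\cdots\Delta^\prime_{v_1}\bg \in \M_d \subseteq \W_d$.

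You instead carry the explicit invariant ``finite sum of shifted differences $u \mapsto (\Delta_w\bg)_{u\cup z}$'' through the induction. You propagate it using \eqref{g8}, Lemma~\ref{l2a}, and the commutation of $\Delta^\prime_v$ with shifts. The underlying mechanism is the same, since the monotonicity in Lemma~\ref{cmd:claim_i} is itself a telescoping into first-order differences, i.e., essentially \eqref{g8}.

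The paper's packaging avoids all bookkeeping of shifts and directly yields that the iterated Kurtz differences are again completely monotone. Yours instead gives an explicit formula for them as sums of values of the $\Delta_w\bg$, and actually puts \eqref{g8} to use; the paper states \eqref{g8} but does not need it in this proof.

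Two minor points. First, your appeal to Lemma~\ref{lem:diff_op} in the inductive step is superfluous, because \eqref{g8} holds for arbitrary $\bg\colon \U_d\to\R$. Second, you tacitly use that composing the shifts by $z$ and $z^\prime$ gives the shift by $z\cup z^\prime$; this is immediate.
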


\begin{proof}
The inclusion $\M(\U_d) \subseteq \M_d$ follows immediately from 
\eqref{g9}.

To establish the reverse inclusion we note that
\[
\left(\Delta_w \Delta^\prime_v \bg\right)_u  =
\sum_{\widetilde{w} \subseteq w} (-1)^{| \widetilde{w} |} \left(
\gamma_{v \cup u \cup \widetilde{w}} - \gamma_{u \cup \widetilde{w}}
\right) =
\left(\Delta_w \bg\right)_{v \cup u} -
\left(\Delta_w \bg\right)_{u}
\]
for every $\bg \colon \U_d \to \R$ and all $u,v,w \in \U_d$.
Let $\bg \in \M_d$. Lemma~\ref{lem:diff_op} yields
$\Delta_w \bg \in \M_d$, and therefore 
$\Delta_w \Delta^\prime_v \bg \leq \bn$, see
Lemma~\ref{cmd:claim_i}, which is equivalent to
$-\Delta^\prime_v \bg \in \M_d$. Inductively, we obtain
\[
(-1)^n \left( \Delta^\prime_{v_n} \dots \Delta^\prime_{v_1} \right) \bg 
\in \M_d
\]
for every $n \in \N$ and all $v_1,\dots,v_n \in \U_d$.
Since $\M_d \subseteq \W_d$, we conclude that
$\bg \in \M(\U_d)$.  
\end{proof}

\subsection{Completely Monotone Functions on $[0,1]^d$}\label{a2}

Throughout this section we have $d \in \N$.
At first, we consider real-valued
functions on the continuous unit cube 
$[0,1]^d$. In this context we will use the following 
notation with $w \subseteq [d]$ for the definition of complete
monotonicity.
For a non-empty set $w$ and a point $z\in [0,1]^d$ 
we write $z_w$ to denote the projection 
of $z$ onto those components with indices in $w$; 
we define $z_w$ to be void if $w=\emptyset$. 
Furthermore, we put $-w := [d] \setminus w$.
For two points $z,t\in [0,1]^d$ we write $z_{-w}\!:\!t_w$
to denote the point where 
those components of $z$ with indices in $w$ are replaced by the 
corresponding components of $t$.
Finally, $\psi(z) := (1-z_1,\dots,1-z_d)$ for $z \in [0,1]^d$.

A function $f \colon [0,1]^d \to \R$
is called completely monotonically increasing
if  
\[
\sum_{w \subseteq v} (-1)^{|v|-|w|} f(x_{-w}\!:\!y_w)\ge 0
\]
holds for all non-empty $v\in \U_d$ and 
$x, y\in [0,1]^d$ such that $x_j \leq y_j$ for every $j \in v$.
See, e.g., \citet[Sec.~2.3]{GKOP24}. Moreover,
$f$ will be called completely monotonically decreasing if 
$f \circ \psi$ is completely monotonically increasing.

\begin{lemma}\label{Lemma32}
Let $d\in\N$. A function $f \colon [0,1]^d \to \R$ is 
completely monotonically decreasing if and only if 
\[
\sum_{w \subseteq v} (-1)^{|w|} f(x_{-w}\!:\!y_w)\ge 0
\]
for all non-empty $v\in \U_d$ and 
$x, y\in [0,1]^d$ such that $x_j \leq y_j$ for every $j \in v$.
\end{lemma}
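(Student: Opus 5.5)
The plan is to unfold the definition and transport everything through the reflection $\psi$. By definition, $f$ is completely monotonically decreasing iff $g := f \circ \psi$ is completely monotonically increasing. That means
\[
\sum_{w \subseteq v} (-1)^{|v|-|w|} g(a_{-w}\!:\!b_w) \ge 0
\]
for all non-empty $v \in \U_d$ and all $a,b \in [0,1]^d$ with $a_j \le b_j$ for $j \in v$.

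The first step is to rewrite this condition in terms of $f$. Since $\psi$ acts coordinatewise, we have $\psi(a_{-w}\!:\!b_w) = \psi(a)_{-w}\!:\!\psi(b)_w$, and therefore $g(a_{-w}\!:\!b_w) = f(x'_{-w}\!:\!y'_w)$ with $x' := \psi(a)$ and $y' := \psi(b)$. The constraint $a_j \le b_j$ becomes $x'_j \ge y'_j$ for $j \in v$. Because $\psi$ is a bijection of $[0,1]^d$, the condition on $g$ is equivalent to
\[
\sum_{w \subseteq v} (-1)^{|v|-|w|} f(x'_{-w}\!:\!y'_w) \ge 0
\]
for all non-empty $v$ and all $x',y'$ with $y'_j \le x'_j$ on $v$.

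The second step is to swap the roles of the two points, so that the smaller one comes first. Put $x := y'$ and $y := x'$, so that $x_j \le y_j$ on $v$. Only coordinates in $v$ matter in the sum, and the coordinates outside $v$ are common to both points, so they can be fixed arbitrarily. Indeed, $x'_{-w}\!:\!y'_w$ has components from $y'$ on $w$, from $x'$ on $v \setminus w$, and from $x'$ off $v$. Setting $\tilde w := v \setminus w$, this point equals $z_{-\tilde w}\!:\!y_{\tilde w}$, where $z$ agrees with $x$ on $v$ and with $x'$ off $v$.

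This is the one delicate spot: off $v$ the base point must be $x'$ rather than $y'$. Since the inequality is required for all pairs of points and only the ordering on $v$ is constrained, we can choose the base point freely off $v$. So I will redefine $x$ off $v$ to match $x'$, which is legitimate because no constraint applies there.

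With this substitution, $(-1)^{|v|-|w|} = (-1)^{|\tilde w|}$, and summing over $\tilde w \subseteq v$ gives exactly
\[
\sum_{\tilde w \subseteq v} (-1)^{|\tilde w|} f(z_{-\tilde w}\!:\!y_{\tilde w}) \ge 0 .
\]
Every pair $(x,y)$ with $x_j \le y_j$ on $v$ arises in this way, and conversely every such pair yields a pair $(x',y')$ of the earlier form. Hence the two conditions are equivalent, which proves the lemma.

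The only real obstacle is keeping the bookkeeping of coordinates off $v$ straight; everything else is a direct change of variables.
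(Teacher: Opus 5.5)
Your proof is correct and follows essentially the same route as the paper: reflect through $\psi$, reindex $w \mapsto v \setminus w$, swap the roles of the two points, and take as the new base point the point that agrees with $\psi(b)$ on $v$ and with $\psi(a)$ off $v$ (your $z$, the paper's $\widehat{y}$). One small precision: the pairs you produce always agree off $v$, so the claim that every pair $(x,y)$ with $x_j \le y_j$ on $v$ ``arises'' really uses that the sum depends on $y$ only through $y_v$ (a step the paper also leaves implicit), not the freedom of the base point off $v$, which is what the other direction uses.
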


\begin{proof} 
Let $v\in \U_d$ be non-empty and $x, y\in [0,1]^d$.
For $\widetilde{x} := \psi(x)$, $\widetilde{y} := \psi(y)$,
and $\widehat{y} := (\widetilde{x}_{-v}\!:\!\widetilde{y}_v)$
we obtain
\[
(\widetilde{x}_{-(v \setminus w)}\!:\!\widetilde{y}_{v \setminus w})
= 
(\widehat{y}_{-w}\!:\!\widetilde{x}_w)
\]
for every $w \subseteq v$. Therefore
\begin{align*}
\sum_{w \subseteq v} (-1)^{|v|-|w|}
f(\widetilde{x}_{-w}\!:\!\widetilde{y}_w)
&= \sum_{w \subseteq v} (-1)^{|w|}
f(\widetilde{x}_{-(v \setminus w)}\!:\!\widetilde{y}_{v \setminus w}) \\
&= \sum_{w \subseteq v} (-1)^{|w|}
f(\widehat{y}_{-w}\!:\!\widetilde{x}_w).
\end{align*}
For $j \in v$ we have equivalence of
$x_j \leq y_j$ and $\widehat{y}_j \leq \widetilde{x}_j$.
\end{proof}

Note that the concept of complete monotonicity 
is closely linked to the concept of
the variation of a function in the 
sense of Vitali and in the sense of Hardy and Krause. Indeed, 
these concepts involve sums of differences 
of function values that can be shown to be all non-negative for 
completely monotonically increasing functions, and all non-positive 
for completely monotonically decreasing functions.
See \cite{AD2015} for details.

Of course, every 
function $\bg \colon \U_d \to \R$
may be identified in a canonical way with a 
function $f \colon \{0,1\}^d \to \R$.
More precisely, let $\varphi \colon \{0,1\}^d \to \U_d$ be given by 
\[
\varphi(x) := \{ j \in [d] : x_j = 1\}.
\]
We obtain a bijection $\Phi$ between 
the set of all real-valued functions on $\U_d$
and the set of all real-valued functions on $\{0,1\}^d$
by
\[
(\Phi \bg) (x) := \gamma_{\varphi(x)}
\]
for $x \in \{0,1\}^d$ and 
$\bg \colon \U_d \to \R$. 

The next lemma shows that
$\M_d$ coincides with the set of all restrictions of non-negative,
completely monotonically decreasing functions 
to the Hamming cube $\{0,1\}^d$,
up to the identification via $\Phi$.

\begin{lemma}\label{Lemma33}
Let $d\in \N$.
The mapping $\Phi$ defines a bijection from $\M_d$ onto 
\[
\M_d^\prime := \{ f|_{\{0,1\}^d} : 
\text{$f \colon [0,1]^d \to {[0,\infty)}$ completely monotonically
decreasing}\}. 
\]
\end{lemma}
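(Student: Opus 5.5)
Since $\Phi$ is already a bijection between all real-valued functions on $\U_d$ and on $\{0,1\}^d$, it suffices to show $\Phi(\M_d) = \M_d^\prime$. Both inclusions are handled by Lemma~\ref{Lemma32}, using the translation
\[
(\Phi\bg)(x_{-w}\!:\!y_w) = \gamma_{u\cup w}
\]
for $x := \varphi^{-1}(u)$, $y := \varphi^{-1}(u\cup v)$ and $w \subseteq v$ with $u \cap v = \emptyset$.

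\emph{Inclusion $\M_d^\prime \subseteq \Phi(\M_d)$.} Let $f$ be non-negative and completely monotonically decreasing on $[0,1]^d$, and set $\bg := \Phi^{-1}(f|_{\{0,1\}^d})$. Then $\bg \ge \bn$. For $u,v \in \U_d$ with $u\cap v=\emptyset$ and $v \neq \emptyset$, take $x,y$ as above. Then $x_j = 0 \le 1 = y_j$ for $j\in v$, and $x_{-w}\!:\!y_w$ is the indicator vector of $u\cup w$. So Lemma~\ref{Lemma32} gives $(\Delta_v\bg)_u = \sum_{w\subseteq v}(-1)^{|w|}\gamma_{u\cup w} \ge 0$. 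If $u\cap v\neq\emptyset$, then $(\Delta_v\bg)_u = 0$ by Lemma~\ref{l1}. The case $v=\emptyset$ is just $\bg\ge\bn$. Hence $\bg\in\M_d$.

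\emph{Inclusion $\Phi(\M_d) \subseteq \M_d^\prime$.} This direction needs an extension, and it is the main obstacle. Given $\bg \in \M_d$, I would use the multilinear extension
\[
f(z) := \sum_{u \in \U_d} \gamma_u \prod_{j\in u} z_j \prod_{j \in [d]\setminus u}(1-z_j),
\]
which agrees with $\Phi\bg$ on $\{0,1\}^d$. It is convenient to rewrite it with the Möbius-type coefficients of Theorem~\ref{t1a}. By that theorem, $\bg = T^\up_{d,1}\bg^\lo$ with $\bg^\lo := T^\lo_{d,1}\bg \ge \bn$, i.e. $\gamma_u = \sum_{v\supseteq u}\gamma^\lo_v$. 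Substituting and regrouping the multilinear sum gives
\[
f(z) = \sum_{v\in\U_d}\gamma^\lo_v \prod_{j\in v}1 \cdot \prod_{j\notin v}(1-z_j) = \sum_{v\in\U_d}\gamma^\lo_v \prod_{j\in[d]\setminus v}(1-z_j),
\]
using $\sum_{u\subseteq v}\prod_{j\in u}z_j\prod_{j\in v\setminus u}(1-z_j)=1$. Thus $f$ is a non-negative combination of the functions $g_v(z) := \prod_{j\notin v}(1-z_j)$. Each $g_v$ is non-negative, so $f \ge 0$.

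By Lemma~\ref{Lemma32}, and since its defining condition is linear with non-negative coefficients $\gamma^\lo_v$, it suffices to check the condition for each $g_v$. The alternating sum $\sum_{w\subseteq v'}(-1)^{|w|} g_v(x_{-w}\!:\!y_w)$ over a non-empty set $v'$ factorizes coordinatewise. The coordinates outside $v'$ contribute a non-negative factor. Each coordinate $j \in v'$ contributes either $(1-x_j)-(1-y_j) = y_j - x_j \ge 0$ if $j\notin v$, or $1-1 = 0$ if $j\in v$. Hence every such alternating sum is non-negative, so $f$ is completely monotonically decreasing and $\Phi\bg = f|_{\{0,1\}^d}\in\M_d^\prime$.

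The only delicate point is the regrouping identity expressing $f$ through $\bg^\lo$. It is a finite computation using $\gamma_u=\sum_{v\supseteq u}\gamma^\lo_v$ and the binomial expansion of $\prod_{j\in v}(z_j+(1-z_j))$.
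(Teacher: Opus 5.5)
Your proof is correct. The first inclusion is essentially the paper's argument; the paper takes $y$ with $\varphi(y)=v$ rather than $u\cup v$, which is immaterial, and leaves the cases $u\cap v\neq\emptyset$ and $v=\emptyset$ implicit.

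For the reverse inclusion you take a genuinely different route.

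\textbf{The paper's extension.} It uses the piecewise constant extension $f(x):=\gamma_{\varphi(x)}$ with $\varphi(x)=\{j\in[d] : x_j>0\}$. Since $f(x)=f(\lceil x\rceil)$, it suffices to check the condition of Lemma~\ref{Lemma32} for $x,y\in\{0,1\}^d$. After splitting $v$ into the coordinates where $x_j<y_j$ and those where $x_j=y_j$, the alternating sum either vanishes or equals $(\Delta_v\bg)_u$ with $u=\varphi(x)$ disjoint from $v$. This argument is fully self-contained, using nothing beyond the definition of $\M_d$, but the extension it produces is discontinuous.

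\textbf{Your extension.} You take the multilinear extension and combine it with the inversion $\bg=T^\up_{d,1}T^\lo_{d,1}\bg$, $T^\lo_{d,1}\bg\geq\bn$, from Theorem~\ref{t1a}. This yields the representation $f(z)=\sum_{v}\gamma^\lo_v\prod_{j\notin v}(1-z_j)$ with $\gamma^\lo_v\geq 0$. It exhibits every element of $\M_d$ as a non-negative combination of the extremal functions $\prod_{j\notin v}(1-z_j)$, whose alternating sums factor coordinatewise into terms $y_j-x_j\geq 0$ or $0$.

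\textbf{Comparison.} Your route costs an appeal to Theorem~\ref{t1a}, which is proved independently of the appendix, so there is no circularity. In return it shows slightly more than the lemma claims: the completely monotonically decreasing extension can always be chosen to be a polynomial, in fact multilinear.
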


\begin{proof}
For $f \colon [0,1]^d \to {[0,\infty)}$ let $\bg \in \W_d$ be
defined by 
\begin{equation}\label{g88}
\Phi \bg  = f|_{\{0,1\}^d}. 
\end{equation}
Furthermore, let $u,v \subseteq [d]$ with $u \cap v = \emptyset$ and
let $x,y \in \{0,1\}^d$ be defined by
$\varphi(x) = u$ and $\varphi(y) = v$.
Note that $u \cup w = \varphi(x_{-w}\!:\!y_w)$ for every $w \subseteq v$. 
Thus we obtain
\begin{align}\label{function_c_m_d_klaus}
\left(\Delta_v \bg \right)_u 
&= 
\sum_{w\subseteq v} (-1)^{|w|} \gamma_{u\cup w} =
\sum_{w\subseteq v} (-1)^{|w|} \gamma_{\varphi(x_{-w}:y_w)}
\nonumber\\
&=
\sum_{w\subseteq v} (-1)^{|w|} f(x_{-w}\!:\!y_w).
\end{align}
Moreover, $x_j = 0 <1 = y_j$ for $j \in v$. 
Due to Lemma~\ref{Lemma32}, $\bg \in \M_d$ 
if $f$ is completely monotonically decreasing, i.e., 
$\M_d^\prime \subseteq \Phi(\M_d)$.

For the proof of the reverse inclusion we extend the mapping
$\varphi$ from $\{0,1\}^d$ to $[0,1]^d$ by
\[
\varphi(x) := \{ j \in [d] : x_j > 0\}.
\]
For $\bg \in \M_d$ let $f \colon [0,1]^d \to {[0,\infty)}$
be defined by 
\[
f(x) := \gamma_{\varphi(x)}
\]
for every $x \in [0,1]^d$. Obviously, we have \eqref{g88}. Thus it
remains to show that $f$ is completely monotonically decreasing.

For $x \in [0,1]^d$ we put 
$\lceil x \rceil := (\lceil x_1 \rceil, \dots, \lceil x_d \rceil) 
\in \{0,1\}^d$. Clearly $\varphi(x) = \varphi(\lceil x \rceil)$,
and therefore $f(x) = f(\lceil x \rceil)$. It follows that $f$ is 
completely monotonically decreasing, if 
\[
\alpha := \sum_{w \subseteq v} (-1)^{|w|} f(x_{-w}\!:\!y_w) \geq 0
\]
for all non-empty $v\in \U_d$ and $x, y\in \{0,1\}^d$ 
such that $x_j \leq y_j$ for every $j \in v$, see Lemma~\ref{Lemma32}.
For $v,x,y$ chosen in this way we put
\[
v_1 := \{j \in v : x_j < y_j\} = \{j \in v : x_j = 0,\ y_j=1\}
\]
and $v_2 := v \setminus v_1$.
Let $w_i \subseteq v_i$ for $1\le i\le 2$. We have
\[
(x_{-(w_1 \cup w_2)}\!:\! y_{w_1 \cup w_2}) = (x_{-w_1}\!:\!y_{w_1}),
\]
which implies
\begin{align*}
\alpha &=
\sum_{w_1 \subseteq v_1} \sum_{w_2 \subseteq v_2}
(-1)^{|w_1|+|w_2|} f(x_{-(w_1\cup w_2)}\!:\!y_{w_1\cup w_2})\\
&=
\sum_{w_1 \subseteq v_1} (-1)^{|w_1|} f(x_{-w_1}\!:\!y_{w_1})
\sum_{w_2 \subseteq v_2} (-1)^{|w_2|}.
\end{align*}
Therefore $\alpha = 0$ if $v_2 \neq \emptyset$, so that
it remains to consider the case $v_2 = \emptyset$, i.e., $v = v_1$.
Without loss of generality we may assume that $y_j = 0$ for
$j \not\in v$, so that $v = \varphi(y)$. Let $u := \varphi(x)$.
Since $u \cap v = \emptyset$, the first part of the proof is
applicable, and \eqref{function_c_m_d_klaus} together with $\bg \in
\M_d$ yields $\alpha \geq 0$. This concludes the proof of
$\Phi(\M_d) \subseteq \M_d^\prime$.
\end{proof}

\section{Finite Measures, CDFs and Densities on $\U_d$}
\label{r8}

The major findings from Theorems~\ref{t1a} and \ref{t1b} may be
reformulated in terms of finite measures, densities (w.r.t.\ the
counting measure), and cumulative distribution functions (CDFs) as 
follows.

Obviously, the finite measures 
$\mu$ on the power set of $\U_d$ may be identified with the
elements $\bg \in \SSS_{d,1}$ via
$\gamma_u := \mu(\{u\})$ and $\mu(A) := \sum_{w \in A} \gamma_w$
for $u \in \U_d$ and $A \subseteq \U_d$. 
That is, $\bg$ is the density of $\mu$ (w.r.t. the counting measure).
For $\bg \in \SSS_{d,1}$
and $\bg^\up := T^\up_{d,1} \bg$ we have
\[
\gamma^\up_u =
\mu(\{ w \in \U_d : u \subseteq w\})
\]
by definition, so that $\bg^\up$
may be considered as the CDF associated to $\mu$. 
We have $T^\lo_{d,1} \bg^\up = \bg$.
If we interpret $T^\lo_{d,1} \bg^\up$ as the derivative of $\bg^\up$,
then we may conclude that the
derivative of the CDF is the density of $\mu$, 
a kind of result which is well-known in the continuous case.
Moreover,
\[
\left(\Delta_v \bg^\up \right)_u =  
\mu(\{ w \in \U_d : u \subseteq w \wedge v \cap w = \emptyset\}),
\]
which follows by induction on $|v|$ using Lemma~\ref{l2}. 
For $s,r \in \N$ 
let 
\[
A(s,r) := \{ w \in \U_d : [s] \setminus [r] \cap w = \emptyset\}.
\]
Since 
\[
\bigcup_{r \in \N} \bigcap_{s \in \N} A(s,r) =
\bigcup_{r \in \N} \{ w \in \U_d : \max w \leq r\} = \U_d,
\]
the $\sigma$-continuity of $\mu$ yields
\[
\gamma^\up_u = \mu
\left(
\bigcup_{r \in \N} \bigcap_{s \in \N} \{ w \in A(s,r) : u \subseteq w\} 
\right) 
=
\lim_{r \to \infty} \lim_{s \to \infty}
\left( \Delta_{[s] \setminus [r]} \bg^\up \right)_u
\]
for every $u \in \U_d$, which is the continuity property required
in the definition of $\A_d$,
cf.\ \eqref{def_A_d}.

Conversely, let $\bg \in T^{\up}_{d,1} (\SSS_{d,1})$.
For $\bg^\lo := T^{\lo}_{d,1} \bg \in \SSS_{d,1}$ we have
$T^{\up}_{d,1} \bg^\lo = \bg$,
so that $\bg$ is the CDF for the measure with density 
$\bg^\lo$.

\section{Maximal Completely Monotone Weights}\label{a3}

In this section we assume that 
$\bg \in \W_d \setminus \M_d$. In this
case lower error bounds have been obtained in Section~\ref{s5:1} by
means of weights of the form $T^\lo_{d,C^\lo} \bg_\ast$ with any choice of
$\bg_\ast \in \Gamma 
= \{ \widetilde{\bg} \in \M_d : \widetilde{\bg} \le \bg \}$, 
cf.\ Remark~\ref{r20}.

\begin{lemma}\label{Lemma:Zorn}
For every $\bg \in \W_d \setminus \M_d$ the set
$\Gamma$ contains a maximal element.
\end{lemma}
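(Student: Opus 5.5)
We will apply Zorn's lemma to $\Gamma$, partially ordered by the componentwise order $\le$ on $\R^{\U_d}$. Two preliminary observations are needed.

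First, $\Gamma$ is non-empty, since $\bn \in \M_d$: all differences $\Delta_v \bn$ vanish. Also $\bn \le \bg$ because $\bg \in \W_d$. So $\bn \in \Gamma$.

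Second, every element of $\Gamma$ is bounded componentwise by $\bg$. This bound is what we will use to get upper bounds for chains.

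Now let $\mathcal{C} \subseteq \Gamma$ be a non-empty chain. Define $\widehat{\bg}$ by
\[
\widehat{\gamma}_u := \sup\{\widetilde{\gamma}_u : \widetilde{\bg} \in \mathcal{C}\}
\]
for $u \in \U_d$. This supremum is finite since it is at most $\gamma_u$, so $\bn \le \widehat{\bg} \le \bg$. Clearly $\widehat{\bg}$ is an upper bound of $\mathcal{C}$.

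The main step is to show $\widehat{\bg} \in \M_d$, i.e., $(\Delta_v \widehat{\bg})_u \ge 0$ for all $u,v$. The sum $(\Delta_v \widehat{\bg})_u$ involves only the finitely many values $\widehat{\gamma}_{u\cup w}$ with $w \subseteq v$, but a pointwise supremum over a chain need not be a limit of one fixed sequence in a way compatible with all coordinates. Two facts resolve this.

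(a) Fix $u,v$ and the finite index set $F := \{u \cup w : w \subseteq v\}$. For $\eps>0$ and each $x \in F$, pick $\bg^{(x)} \in \mathcal{C}$ with $\gamma^{(x)}_x > \widehat{\gamma}_x - \eps$. Since $\mathcal{C}$ is totally ordered, the finitely many $\bg^{(x)}$ have a largest element $\bg^{\eps} \in \mathcal{C}$. It satisfies $\widehat{\gamma}_x - \eps < \gamma^{\eps}_x \le \widehat{\gamma}_x$ simultaneously for all $x \in F$.

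(b) Since $\bg^\eps \in \M_d$, we have $(\Delta_v \bg^\eps)_u \ge 0$. Moreover $(\Delta_v \bg^\eps)_u$ differs from $(\Delta_v \widehat{\bg})_u$ by at most $2^{|v|}\eps$.

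Letting $\eps \to 0$ gives $(\Delta_v \widehat{\bg})_u \ge 0$.

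Hence $\widehat{\bg} \in \Gamma$, so every chain in $\Gamma$ has an upper bound in $\Gamma$, and Zorn's lemma yields a maximal element of $\Gamma$. The only delicate point is the simultaneous approximation in (a), which relies on totality of the chain together with the finiteness of each difference. The hypothesis $\bg \notin \M_d$ is not needed; it only ensures the maximal element differs from $\bg$.
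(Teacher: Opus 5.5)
Your proposal is correct and follows the paper's proof essentially step for step. Both use $\bn \in \Gamma$, take the pointwise supremum over a chain as its upper bound, prove complete monotonicity of that supremum via a single chain element that simultaneously approximates the finitely many values $\widehat{\gamma}_{u\cup w}$, $w \subseteq v$, and then apply Zorn's lemma; the only cosmetic difference is that the paper approximates within $\eps/2^{|v|}$ per coordinate, whereas you use $\eps$ and absorb the factor $2^{|v|}$ at the end.
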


\begin{proof}
Since the trivial weights ${\bf 0}$ are 
in $\Gamma$, the set is not empty. 
Endowed with the relation $\le$ the set $\Gamma$ is partially 
ordered. Let $I$ be some index set, and let 
$(\bg^{(i)})_{i\in I}$ be a totally ordered family in $\Gamma$. 
Then we may define $\widehat{\bg} \in \W_d$ via 
\[
\widehat{\gamma}_u := \sup \{\gamma^{(i)}_u : i \in I\}
\]
for all $u\in \U_d$. Note that 
$0 \le \widehat{\gamma}_u \le \gamma_u$ for each $u\in \U_d$, 
due to the definition of $\Gamma$. We now show that 
$\widehat{\bg}$ is completely monotone. Let $u, v\in \U_d$ and 
$\eps >0$ be arbitrary. Due to the definition of the supremum, 
we find for each $w\subseteq v$ some index $i_w\in I$ such that
\[
\widehat{\gamma}_{u\cup w} - \gamma^{(i_w)}_{u\cup w} 
\le \eps / 2^{|v|}.
\]
Since $(\bg^{(i)})_{i\in I}$ is totally ordered 
and $\{i_w : w\subseteq v\}$ is a finite subset of $I$, there 
exists an index $j \in \{i_w : w\subseteq v\}$ 
such that
\[
\bg^{(i_w)} \leq \bg^{(j)}
\]
for every $w \subseteq v$. In particular,
\[
\gamma^{(i_w)}_{u\cup w} \le 
\gamma^{(j)}_{u\cup w} \le \widehat{\gamma}_{u\cup w} 
\]
for all $w\subseteq v$. Therefore we have
\begin{equation*}
\begin{split}
\left( \Delta_v \widehat{\bg} \right)_u 
&= \sum_{w\subseteq v} (-1)^{|w|} \widehat{\gamma}_{u\cup w}
=  \sum_{w\subseteq v} (-1)^{|w|} \left(   \gamma^{(j)}_{u\cup w} 
+ \left( \widehat{\gamma}_{u\cup w} -  
\gamma^{(j)}_{u\cup w} \right) \right)\\
&\ge \sum_{w \subseteq v} (-1)^{|w|}  \gamma^{(j)}_{u\cup w} -  
\sum_{w \subseteq v}  
 \left( \widehat{\gamma}_{u\cup w} -  \gamma^{(j)}_{u\cup w} \right) 
\ge \left( \Delta_v \gamma^{(j)} \right)_u - \eps \\
&\ge  - \eps,
\end{split}
\end{equation*}
where in the last inequality we used that $\bg^{(j)}$ is completely 
monotone.
Letting $\eps$ tend to zero, we see that 
$\left( \Delta_v \widehat{\bg} \right)_u \ge 0$. Thus 
$\widehat{\bg}$ is completely monotone and therefore an 
upper bound of
$(\bg^{(i)})_{i\in I}$ in $\Gamma $. Due to Zorn's lemma the 
set $\Gamma $ has a maximal element $\bg^{\ast}$. 
\end{proof}

Considering Lemma \ref{Lemma:Zorn}, it is a natural question whether 
$\Gamma$ contains a greatest element.
As we shall see, this is in general not the case. 

\begin{exmp}
Let $d := 2$ and $C := 1$. We consider the sum operator
$T:=T_{2,1}^\up$ as a linear mapping on $\R^{\U_2}$, i.e.,
\[
\left(T \be\right)_u :=
\sum_{\substack{v\in \U_2 \\ u\subseteq v}} \eta_v
\]
for $\be \in \R^{\U_2}$.
Obviously $T$ is a bijection, and due to Theorem~\ref{t1a} we
have $\be \in \M_2$ if and only if $T^{-1} \be \geq 0$.

As a counter-example,
we consider the weights $\bg \in \W_2$ with 
\[
\gamma_{\emptyset}=5,\quad 
\gamma_{\{1\}}=5,\quad \gamma_{\{2\}}=3,\quad \gamma_{\{1,2\}}=1.
\]
It is easily checked that 
\[
\left(T^{-1}\bg\right)_{\emptyset}=-2,\quad 
\left(T^{-1}\bg\right)_{\{1\}}=4,\quad 
\left(T^{-1}\bg\right)_{\{2\}}=2,\quad 
\left(T^{-1}\bg\right)_{\{1,2\}}=1,
\]
and therefore $\bg\not\in\M_2$. 

Now consider $\be^{(\eps)} \in \W_2$ given by 
\[
\eta_{\emptyset}^{(\eps)}=5,\quad \eta_{\{1\}}^{(\eps)}=5,\quad 
\eta_{\{2\}}^{(\eps)}=1+\eps,\quad \eta_{\{1,2\}}^{(\eps)}=1,
\]
for $\eps \geq 0$. Observe that
$\be^{(0)} \leq \widetilde{\bg} \leq \bg$ is equivalent to 
$\widetilde{\bg} = \be^{(\eps)}$ with $0 \leq \eps \leq 2$.
It is easily checked that 
\begin{gather*}
\left(T^{-1}\be^{(\eps)}\right)_{\emptyset}=-\eps,\quad 
\left(T^{-1}\be^{(\eps)}\right)_{\{1\}}=4,\\
\left(T^{-1}\be^{(\eps)}\right)_{\{2\}}=\eps,\quad 
\left(T^{-1}\be^{(\eps)}\right)_{\{1,2\}}=1,
\end{gather*}
and therefore $\be^{(\eps)} \in\M_2$ if and only if $\eps = 0$.
We conclude that $\be^{(0)}$ is a maximal element of $\Gamma$.

In the next step, consider 
$\bz^{(\eps)} \in \W_2$ given by 
\[
\zeta_{\emptyset}^{(\eps)}=5,\quad \zeta_{\{1\}}^{(\eps)}=3+\eps,\quad 
\zeta_{\{2\}}^{(\eps)}=3,\quad \zeta_{\{1,2\}}^{(\eps)}=1,
\]
for $\eps \geq 0$.
It is easily checked that 
\begin{gather*}
\left(T^{-1}\bz^{(\eps)}\right)_{\emptyset}=-\eps,\quad 
\left(T^{-1}\bz^{(\eps)}\right)_{\{1\}}=2+\eps,\\
\left(T^{-1}\bz^{(\eps)}\right)_{\{2\}}=2,\quad 
\left(T^{-1}\bz^{(\eps)}\right)_{\{1,2\}}=1.
\end{gather*}
As before, we conclude that $\bz^{(0)}$ is a maximal element 
of $\Gamma$. Since $\be^{(0)} \neq \bz^{(0)}$, the set $\Gamma$
does not contain a greatest element.
\end{exmp}

\subsection*{Acknowledgment}

We are grateful to Jan Loran and Carsten Steiber
for comments on earlier versions of this paper.
Part of this work was done, while M.G. and K.R. have visited 
the RICAM in Linz and the Mathematisches Seminar in Kiel; we are
grateful for the kind hospitality. The second author acknowledges the 
support of the Austrian Science Fund (FWF) Project  
P 34808/Grant DOI: 10.55776/P34808. For open access purposes, the authors 
have applied a CC BY public copyright license to any author accepted 
manuscript version arising from this submission.

\footnotesize

\bibliographystyle{abbrvnat}

\end{document}